\newcommand*{\updatelabelname}[1]{%
  \xdef\@currentlabelname{#1}}
\newcommand{\hHom}{{\mathcal Hom}}
\newtheorem{thm}{Theorem}[section]
\newtheorem{cor}[thm]{Corollary}
\newtheorem{lem}[thm]{Lemma}
\newtheorem{prop}[thm]{Proposition}
\newtheorem{defn}[thm]{Definition}
\theoremstyle{definition}
\newtheorem{prop-defn}[thm]{Proposition and Definition}
\theoremstyle{remark}
\newtheorem{rem}[thm]{Remark}
\newtheorem{rems}[thm]{Remarks}
\newtheorem{conjecture}[thm]{Conjecture}
\newtheorem{Question}[thm]{Question}
\newtheorem*{Question*}{\bf Question}
\numberwithin{equation}{section}
\newcommand{\thistheoremname}{}
\newtheorem*{genericprop*}{\thistheoremname}
\newenvironment{namedprop*}[1]
  {\renewcommand{\thistheoremname}{#1}%
   \begin{genericprop*}}
  {\end{genericprop*}}
\newtheorem*{genericlem*}{\thistheoremname}
\newenvironment{namedlem*}[1]
  {\renewcommand{\thistheoremname}{#1}%
   \begin{genericlem*}}
  {\end{genericlem*}}
  \newtheorem*{genericthm*}{\thistheoremname}
\newenvironment{namedthm*}[1]
  {\renewcommand{\thistheoremname}{#1}%
   \begin{genericthm*}}
  {\end{genericthm*}}
  \newtheorem*{genericcond*}{\thistheoremname}
\newenvironment{namedcond*}[1]
  {\renewcommand{\thistheoremname}{#1}%
   \begin{genericcond*}}
  {\end{genericcond*}}
\newcommand{\Id}{{\rm Id}}
\newcommand{\Image}{{\rm Im}}
\newcommand{\codim}{{\rm codim}}
\newcommand{\Diff}{{\rm Diff}}
\newcommand{\vol}{{\rm vol}}
\newcommand{\osc}{{\rm osc}}
\newcommand{\Mor}{{\rm Mor}}
\newcommand{\F}{{\mathcal F}}
\newcommand{\G}{{\mathcal G}}
\newcommand{\I}{{\mathcal I}}
\newcommand{\cI}{{\mathcal I}^{\bullet}}
\newcommand{\cH}{{\mathcal H}^{\bullet}}
\newcommand{\J}{{\mathcal J}}
\newcommand \id {{\rm id}}
\newcommand{\supp}{{\rm supp}}
\newcommand {\gr}{\mathrm {gr}}
\newcommand {\diam}{\mathrm {diam}}
\newcommand {\GFQI}{G.F.Q.I.\xspace}
\DeclareMathAlphabet{\mathpzc}{OT1}{pzc}{m}{it}
\newcommand{\DHam }{\mathfrak{DHam}}
\newcommand {\LL}{{\mathfrak L}}
\newcommand{\comm}[1]{}
\title[Inverse reduction inequalities for spectral numbers and applications]{Inverse reduction inequalities for spectral numbers and applications}
\author{C. Viterbo }
\thanks{Université Paris-Saclay, CNRS, Laboratoire de mathématiques d’Orsay, 91405, Orsay, France. Part of this paper was written as the author was a member of DMA, \'Ecole Normale Sup\'erieure, 45 Rue d'Ulm, 75230 Cedex 05, FRANCE. We also acknowledge support from ANR MICROLOCAL (ANR-15-CE40-0007) and COSY (ANR-21-CE40-0002)}	
\begin{document}
\def \Z {\mathbb Z}
\def \H {\mathcal H}
\def \cF {\F^{\bullet}}
\def \cG {\G^{\bullet}}
\def \cI {\I^{\bullet}}
\def \cJ {\J^{\bullet}}
\def \Char {{\rm Char}}
\def \card {{\rm card}}
\def \cstar {\varoast}
\begin{abstract}
Our main result is the proof of an inequality between the spectral numbers of a Lagrangian and the spectral numbers of its reductions, going in the opposite direction to
the classical inequality (see e.g \cite{Viterbo-STAGGF}). This has applications to the "Geometrically bounded Lagrangians are spectrally bounded" conjecture from \cite{SHT} and to the structure of elements in the $\gamma$-completion of the set of exact Lagrangians (see \cite{Viterbo-gammas}). We also investigate the local path-connectedness of the set of Hamiltonian diffeomorphisms with the spectral metric. 
\end{abstract}

\maketitle
\today ,\;\; \currenttime

\tableofcontents
\section{Introduction}
The goal of this paper is twofold. We first prove an ``inverse reduction inequality" for spectral capacities. Classically (see \cite{Viterbo-STAGGF} or Proposition \ref{Prop-reduction-inequality}) the spectral norm of an exact  Lagrangian is bounded from below by the supremum of the spectral norms of its reductions. Here we prove that with proper assumptions, and up to a constant factor depending only on the dimension, the spectral norm of the Lagrangian can be also bounded from above by the supremum of the spectral norms of its reductions. This is Theorem \ref{Thm-Reverse-reduction-inequality}.

A consequence of this inverse reduction inequality is that the following property holds in the  cotangent bundles $T^*N$ of certain manifolds (they are specified in Section \ref{Section-6}): if $L$ is an exact Lagrangian far away from the zero section, then we may move $L$ far away from itself by a Hamiltonian flow preserving the zero section. This is Theorem \ref{Thm-6.3}. 

 This result is used in \cite{Viterbo-gammas} to prove that for certain elements in the $\gamma$-completion (or Humili\`ere completion) of $\mathcal L(T^{*}N)$, the $\gamma$-support (defined in \cite{Viterbo-gammas}) determines the element.  

The second goal of the paper is an application of these results to a conjecture that was stated and used in a preliminary version of \cite{SHT} (the present version does not use this result), according to which if $L$ is contained in $DT^*N=\{ (q,p)\in T^*N \mid \vert p \vert \leq 1\}$
then $\gamma (L) \leq C_N$ for some constant $C_N$. 

This conjecture was first proved by Shelukhin in \cite{Shelukhin-Zoll, shelukhin-sc-viterbo} for a certain class of manifolds (called ``string point-invertible'') and for Zoll symmetric spaces. 

We prove this conjecture for another  class in Theorem \ref{Thm-viterbo-conjecture}. Our class is different from Shelukhin's, but it is not clear whether one contains the other. As we shall see our class contains all (compact) homogeneous spaces, in particular the spheres of any dimensions, the projective spaces,  (according to \cite{shelukhin-sc-viterbo} spheres of even dimensions and projective spaces are not ``string-point invertible", so are not covered\footnote{but are covered by those of \cite{Shelukhin-Zoll}.} by the results of \cite{shelukhin-sc-viterbo}). 

Another proof, valid for Lie groups and homogeneous spaces is due to Guillermou and Vichery (see \cite{Guillermou-Vichery}) was obtained independently from the one presented here. 

In a last section we discuss the issue of local path-connectedness of the space of Hamiltonian diffeomorphisms for the metric $\gamma$. 

Returning to the inverse reduction inequality, note that having both a reduction inequality and an inverse reduction inequality tells something about how spectral norms measure the size of objects. By ''size'' we here mean a map
$\sigma: \mathcal S_\sigma \longrightarrow [0,+\infty]$, where $\mathcal S_\sigma$ is a subset of $\mathcal P (X)$, the set of subsets of $X$, such that for $A \subset B$ we have $\sigma (A)\leq \sigma(B)$.

For example the diameter of a set, or its Lebesgue measure are sizes, with $\mathcal S_\sigma = \mathcal P(X)$ in the first case, $\mathcal S_\sigma $ is the set of measurable sets in the second case. There are many more examples, from integral geometry like the width (minimal distance between parallel hyperplanes containing $A$),  harmonic analysis - like the Newtonian capacity, symplectic geometry- like Gromov's width, the Ekeland-Hofer or Hofer-Zehnder capacities, the displacement energy, the  spectral capacity, etc...

Now if we think of reductions as ``slices" of a set, we can think of  sizes that are  bounded from below by the supremum of the sizes of its slices in a fixed direction :  for example the diameter of a set can be estimated from below by the supremum of the diameter of its slices, but not from above :  if $H$ is a hyperplane, 
$$ \sup_{x\in {\mathbb R}^n} \diam (A\cap (x+H)) \leq \diam (A)$$ but if $A$ is a segment orthogonal to $H$ the left hand side vanishes while $\diam(A)$ can be arbitrarily large, so there cannot be an estimate in the opposite direction. 
On the other hand 
the volume of a body - let's say contained in a fixed cube- can be estimated from above by the supremum of the volumes of its slices in a fixed direction (by Fubini's formula) :
$$\vol_n(A) = \int \vol_{n-1}(A\cap (x+H))dx \leq C \sup_{x}\vol_{n-1}(A\cap (x+H))$$

But it cannot be estimated from below by such a  supremum : a set of small volume concentrated near a hyperplane can have a slice with very large volume. 
We notice that an estimate from above was  already discovered in \cite{Viterbo-isoperimetric} for the displacement energy (defined in \cite{Hofer-distance}) rather than the spectral norm. However an estimate from below does not hold in this case... 

Remarkably, the spectral norm satisfies both inequalities. 

\section{Comments and acknowledgements}
This paper is strongly related to  \cite{Viterbo-gammas}, where Theorem \ref{Thm-6.3} is crucially used. 
The present paper was originally written using generating functions as the main tool, and assumed the Lagrangians were Hamiltonialy isotopic to the zero section. In order to extend our results to the general exact case, we ``translated'' our paper to this more general setting. In this article we sometimes repeat the argument using generating functions as this may clarify the ideas involved. 
Finally I wish to thank Stéphane Guillermou for his careful reading of the paper and for pointing out some inaccuracies. 

\section{Basic definitions and notations}
We consider the set of exact embedded Lagrangians in $T^*N$ and denote it by $\mathfrak L (T^*N)$. Since any two exact Lagrangian manifolds intersect (see\footnote{In fact this is more or less explicit in \cite{Gromov-symplectic}, $2.3B'_{3}$. The authors we mention prove much more.
} \cite{Fukaya-Seidel-Smith, Kragh-Abouzaid}), such a manifold must be connected. We often need to consider the pairs $(L, f_L)$ where $f_L$ is a primitive of $\lambda_{\mid L}$ on $L$. Note for a fixed element $L \in \mathfrak L (T^*N)$ there is a one-dimensional family of elements of pairs $(L,f_L)$ obtained by adding a constant to $f_L$. We denote by $\widetilde L +c$ the element associated to $f_L+c$. 

 For a symplectic manifold $(M,\omega)$ we denote by $\Lambda (M)$ the bundle with fiber over $z\in M$ given by $\Lambda (T_zM)$, the Grassmannian Lagrangian of  $T_zM$,  and by $\widetilde \Lambda (M)$ the bundle over $M$  with fiber the universal cover of $\Lambda (T_zM)$.

The Gauss map $G_L$ sends $L$ to a section over $L$ of $\Lambda (T^*N)$ and we call ``grading'' of $L$ a lift  $\widetilde G_L$ of $G_L$ to a section of $\widetilde \Lambda(T^*N)$ over $L$ (see \cite{Seidel-graded}). Since  by the Kragh-Abouzaid theorem (see \cite{Kragh-Abouzaid}) the Maslov class of $L$ always vanishes, such a lift always exists. 

We define $\mathcal L(T^*N)$ as the set of triples $\widetilde L= (L, f_L, \widetilde G_L)$. The obvious action of $\pi_1(\Lambda(n))=\mathbb Z$  on 
 $\widetilde \Lambda(T^*N)$ induces an action denoted by $T$ on $\mathcal L(T^*N)$. This action is also denoted by $T^k(\widetilde L)=\widetilde L [k]$.

Note that in $T^*N$, the bundle $ \widetilde \Lambda (T^*N)$ can be constructed explicitly as follows : choose $J$ an almost complex structure such that $JT_{(x,0)}0_N=V_{(x,0)}$ where $V_{(x,p)}$ is the tangent space to the fiber of the projection $T^*N \longrightarrow N$ at $(x,p)$.   Then $\widetilde \Lambda_{(x,p)} (T^*N)$ is the set of continuous paths from $JV_{(x,p)}$ to $T \in \Lambda (T^*N)$ modulo the relation of homotopy with fixed endpoints. 

We may thus speak for $L=0_N$ of the ``trivial lift'' corresponding to the constant path at $T_{(x,0)}0_N$ and by  abuse of notation, we write $0_N$ for the zero section with $f_L\equiv 0$ and this trivial lift. 
 
  Moreover given a Hamiltonian $H(t,z)$ on $T^*N$, and $\widetilde L_0 \in \mathcal L(T^*N)$ we can define $\varphi_H(\widetilde L_0) \in \mathcal L(T^*N)$. 
 The function $f_{L_1}$ is given by 
 $$f_{L_1}(z_1)= \varphi_H(f_{L_0})(z_1)=\int_0^1 p\dot q - H(t,q,p) dt + f_L(z_0)$$ where $(q(t),p(t))=\varphi_H^t(z_0)=z_t$  (see proposition 5.2 in \cite{Viterbo-gammas}). 
Note that this depends on the choice of $H$ and not only on the flow (so we commit an abuse of notation here). Similarly given $\widetilde G_{L_0}$ a lift of the Gauss map of $L_0$, $\varphi_H$ defines a lift $\widetilde G_{L_1}$ of the Gauss map of $L_1$  In particular we can replace $H$ by $H(t,z)+c(t)$ and $f_{L_1}$ will be replaced by  $f_{L_1}+c(1)-c(0)$. 
Of course the projection of $\mathcal L(T^*N)$ on $\mathfrak L(T^*N)$ is a fibration with fiber $\mathbb R \times \mathbb Z$ (the first factor corresponds to the $f_L+c$ the second one to $T^k(\widetilde L)$). 

 Note that we shall, whenever we feel that this causes no confusion, just write $L$ instead of $\widetilde L$ or $(L, f_L, \widetilde G_L)$. 
 
\section{Spectral invariants for sheaves and Lagrangians}\label{Section-4}

We denote by $D^b(X)$ the derived category of bounded complexes of sheaves on $X$. For $\cF, \cG \in D^b(X \times {\mathbb R} )$ the operation  $\cF \cstar \cG$ is defined as follows. Let $s: N\times {\mathbb R}\times N \times {\mathbb R} \longrightarrow N\times N \times {\mathbb R}  $ be the map given by $s(x_1,t_1, x_2,t_2)=(x_1,x_2,t_1+t_2)$
and $d: N\times {\mathbb R} \longrightarrow N\times N \times {\mathbb R} $ given by $d(x,t)=(x,x,t)$. We set  
$$ \cF \cstar \cG = (Rs)_! d^{-1}(\cF \boxtimes \cG)$$
and $R\hHom^\cstar$ is the adjoint of $\cstar$ in the sense that 
$$\Mor_{D^b(X\times {\mathbb R} )} (\cF, R\hHom^\cstar(\cG, \cH))=\Mor_{D^b(X\times {\mathbb R})} (\cF \cstar \cG, \cH)$$

Let $\widetilde L=(L,f_L, \widetilde G_L) \in \mathcal L(T^*N)$  and $$\widehat L=\left\{(q,\tau p, f_L(q,p), \tau) \mid (q,p)\in L, \tau > 0\right \}$$
the homogenized Lagrangian in $T^*(N\times {\mathbb R})$. 

For a sheaf $\cF$ in $D^{b}(N)$ we define $SS(\cF)$ as the singular support of $\cF$ (see \cite{K-S}, proposition 5.1.1, p.218) and denote by $SS^{\bullet}(\cF)$ the same set with the zero section removed, that is $SS(\cF)\setminus 0_{N}$. 
According to Guillermou and the author (see \cite{Guillermou} for \ref{TQ1}), (\ref{TQ2}), (\ref{TQ4}) and  \cite{Viterbo-Sheaves} for the other properties),  we have

\begin{thm} \label{Thm-Quantization}
To each $\widetilde L\in \mathcal L(T^*N)$ we can associate $\cF_{L} \in D^b(N)$ such that 
\begin{enumerate} 
\item\label{TQ1} $SS^{\bullet}(\cF_{ L})=\widehat L$
\item\label{TQ2} $\cF_{\widetilde L}$ is pure (cf. \cite{K-S} page 309), $\F_L=0$ near $N\times \{-\infty\}$ and $\F_L=k_N$ near $N\times \{+\infty\}$
\item\label{TQ3} We have an isomorphism
$$FH^\bullet(L_0,L_1;a,b)=H^*\left (N\times [a,b[, R\hHom^{\cstar}(\cF_{L_0},\cF_{L_1})\right)$$
\item \label{TQ4}  $\cF_L$ is the unique element in $D^b(X\times {\mathbb R})$  satisfying properties (\ref{TQ1}) and (\ref{TQ2}). 
\item \label{TQ5} There is a natural product map  $$
 R\hHom^\cstar(\cF_{L_1},\cF_{L_2}) \otimes R\hHom^\cstar(\cF_{L_2},\cF_{L_3}) \longrightarrow  R\hHom^\cstar(\cF_{L_1},\cF_{L_3})
$$
inducing in cohomology a map
\begin{gather*}
H^*(N\times [\lambda , +\infty [, R\hHom^{\cstar}(\cF_{L_1},\cF_{L_2})) \otimes H^*(N\times [\mu , +\infty [, R\hHom^{\cstar}(\cF_{L_2},\cF_{L_3}))
\\  \Big\downarrow \cup_{\cstar} \\ H^*(N\times [\lambda + \mu , +\infty [, R\hHom^{\cstar}(\cF_{L_1},\cF_{L_3}))
\end{gather*}
that coincides through the above identifications to the triangle product in Floer cohomology.\end{enumerate} 
\end{thm}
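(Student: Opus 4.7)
The plan is to establish existence via a Guillermou-Kashiwara-Schapira (GKS) style quantization of contact isotopies combined with Guillermou's structure theorem for simple sheaves along exact Lagrangians, and then to deduce uniqueness and the Floer comparison from microlocal analysis along $\widehat L$. First I would homogenize $\widetilde L$ to the closed conic Lagrangian $\widehat L \subset T^*(N \times \mathbb{R})\setminus 0$; since $L$ is exact and embedded with vanishing Maslov class, $\widehat L$ inherits a grading lifted from $\widetilde G_L$. If $L$ happens to be Hamiltonian isotopic to $0_N$ via $\varphi_H$, the construction is direct: take the GKS kernel associated to the contact lift of $\varphi_H$ and apply it to $k_{N \times [0,+\infty)}$, which is the natural quantization of $\widehat{0_N}$; the result is pure, has the correct behavior near $\pm \infty$, and has singular support $\widehat L$ by functoriality of singular support under kernel action. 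For a general exact $\widetilde L$, not a priori Hamiltonian isotopic to $0_N$, I would invoke Guillermou's quantization theorem, which builds a simple pure sheaf along $\widehat L$ by gluing local normal-form models along a cellular decomposition of $\widehat L$, with gluing data controlled by $f_L$ and $\widetilde G_L$.

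For uniqueness (\ref{TQ4}), given two candidates $\cF, \cG$ satisfying (\ref{TQ1})-(\ref{TQ2}), I would analyze $R\hHom^\cstar(\cF, \cG)$: its singular support concentrates on the diagonal of $\widehat L$, and purity of both sheaves with matching grading forces this complex to be locally constant and concentrated in a single degree. The boundary conditions near $\pm\infty$ then pin this local system down to $k_N$ placed in degree zero, yielding a canonical isomorphism $\cF \cong \cG$.

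For the Floer-sheaf comparison (\ref{TQ3}), I would use Tamarkin's persistence-style interpretation of $H^*(N \times [a,b[, R\hHom^\cstar(\cF_{L_0}, \cF_{L_1}))$: after a generic Hamiltonian perturbation making $L_0$ and $L_1$ intersect transversally, both sides reduce to counts of intersection points weighted by the action differences $f_{L_1} - f_{L_0}$, and a chain-level comparison yields the required isomorphism, compatibly with the filtration by $[a,b[$. The triangle product (\ref{TQ5}) is the natural composition in the triangulated category induced by the adjunction defining $\cstar$; its agreement with the Floer triangle product is checked by matching both on a common geometric model, e.g.\ pair-of-pants counts versus Morse trajectory trees on $N$.

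The principal obstacle is the existence step in the fully general exact case: when $L$ is only known to be homotopy equivalent (not Hamiltonian isotopic) to $0_N$, transport by a GKS kernel is not available, and one must run Guillermou's obstruction-theoretic construction, whose inputs are precisely the vanishing of the Maslov class together with the normalizations $(f_L, \widetilde G_L)$. Once existence is granted, the remaining items are comparatively formal, modulo the classical identification between Floer and Morse-theoretic complexes in the transverse regime.
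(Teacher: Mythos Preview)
The paper does not prove this theorem in the text: it simply attributes items (\ref{TQ1}), (\ref{TQ2}), (\ref{TQ4}) to Guillermou \cite{Guillermou} and items (\ref{TQ3}), (\ref{TQ5}) to \cite{Viterbo-Sheaves}, stating the result as a black box imported from those references. Your outline is a reasonable and essentially accurate sketch of what those references do --- GKS transport of $k_{N\times[0,+\infty)}$ in the Hamiltonian-isotopic case, Guillermou's quantization for the general exact case using vanishing of the Maslov class, uniqueness via microlocal analysis of $R\hHom^\cstar$, and the Floer comparison via the action filtration --- so in that sense you have reconstructed the right argument, but there is nothing in the paper itself to compare against beyond the citations.
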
 
\begin{rem} 
The grading $\widetilde G_L$ defines the grading of $\cF$, hence of the Floer cohomology. 
\end{rem} 
Note that for $X$ open, we denoted by $H^*(X\times [\lambda, \mu[, \cF)$  the relative cohomology of sections on $X \times ]-\infty, \mu[$ vanishing on $X \times ]-\infty, \lambda[$ and fitting in the exact sequence
$$
H^*(X\times [\lambda, \mu[, \cF) \longrightarrow H^*(X\times [-\infty, \mu[, \cF) \longrightarrow H^*(X\times [-\infty, \lambda[, \cF)
$$
It is also equal to the cohomology associated to the derived functor $R\Gamma_Z$ where $Z$ is the locally closed set $X\times [\lambda, \mu[$.
We should write $\cF_{\widetilde L}$ instead of $\cF_{L}$ but this abuse of notation should be harmless. 
We shall now define the {\bf pseudo-Tamarkin category}. The standard definition for the Tamarkin category is the left-orthogonal\footnote{An object $X$ is left-orthogonal to $Y$ if  $\Mor(X,Y)=0$. If $\mathcal D$ is a full triangulated subcategory of $\mathcal  C$, then $\mathcal D^{\perp}$ is the full triangulated subcategory with objects $\left\{X \in \mathcal C \mid \Mor(X,Y)=0 \forall Y \in \mathcal D\right \}$. } of the category of sheaves such that $SS(\cF)\subset \{\tau \leq 0\}$. This is contained in the pseudo-Tamarkin category, but the pseudo-Tamarkin is easier for our purposes, since morphisms are again objects in the category (this is not the case in the Tamarkin category). 

\begin{defn}[Pseudo-Tamarkin category]\label{Def-Tamarkin}
We denote by $\mathcal T_0(N)$ the set of 
 $\cF$ in $D^b(X \times {\mathbb R} )$ such that 
 \begin{enumerate} 
 \item  $SS(\cF) \subset \{\tau \geq 0\}$ 
 \item   $\F_L=0$ near $N\times \{-\infty\}$ 
 \item  $\F_L=k_N$ near $N\times \{+\infty\}$
 \end{enumerate} 
\end{defn} 

\begin{defn}[see \cite{Vichery-these}, Section 8.3]
Let $\cF$be an element in $\mathcal T_0(X)$. 
Let $\alpha \in H^*(N \times {\mathbb R}, \cF)\simeq H^*(N)$ be a nonzero class. We define 
$$c(\alpha, \cF)= \sup \left\{ t \in {\mathbb R}  \mid \alpha \in \Image ( H^*(N \times [t, +\infty[, \cF))\right \}$$
\end{defn} 

Note that $\cF_L$ satisfies Property (\ref{TQ2}) of Theorem \ref{Thm-Quantization}, so $H^*(N \times {\mathbb R}, \cF)\simeq H^*(N)$ and thus we have, using the canonical map
$$H^*(N \times [t, +\infty[, \cF) \longrightarrow H^*(N \times  {\mathbb R}, \cF)$$
and Theorem \ref{Thm-Quantization}, (\ref{TQ3}) the following

\begin{cor} Let $\widetilde L $ be an element in $\mathcal L(T^{*}N)$. 
Let $\alpha \in H^*(N \times {\mathbb R}, \cF)\simeq H^*(N)$ be a nonzero class. 
Then $c(\alpha, \cF_L)$ coincides with the spectral invariant $c(\alpha,L)$ associated to $\alpha$ using Floer cohomology. 
\end{cor}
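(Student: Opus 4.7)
The plan is to reduce the corollary to a direct comparison of definitions, once the identification of Theorem \ref{Thm-Quantization}~(\ref{TQ3}) is specialized to $L_0 = 0_N$. By definition, the Floer spectral invariant $c(\alpha, L)$ associated to a nonzero class $\alpha \in FH^\bullet(0_N, L) \simeq H^*(N)$ is the supremum of those $t \in \mathbb R$ for which $\alpha$ lies in the image of the action-filtration map $FH^\bullet(0_N, L; t, +\infty) \longrightarrow FH^\bullet(0_N, L)$. So the content of the corollary is that, under the identifications of Theorem \ref{Thm-Quantization}, this image map is the same as $H^*(N\times [t, +\infty[, \cF_L) \longrightarrow H^*(N\times {\mathbb R}, \cF_L)$.

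The key step is to identify the quantization of the (decorated) zero section. The homogenized Lagrangian of $0_N$ with primitive $f \equiv 0$ and trivial grading is $\widehat{0_N} = \{(q, 0, 0, \tau) \mid q \in N,\; \tau > 0\}$, which is the positive conormal of $N \times \{0\} \subset N \times {\mathbb R}$. The constant sheaf $k_{N\times [0, +\infty[}$ satisfies $SS^\bullet = \widehat{0_N}$, is pure, and has the boundary behavior demanded in property (\ref{TQ2}). By the uniqueness in Theorem \ref{Thm-Quantization}~(\ref{TQ4}), we deduce $\cF_{0_N} \simeq k_{N\times [0, +\infty[}$. This sheaf is the monoidal unit of $\cstar$, so the general adjunction formula gives
$$R\hHom^\cstar(\cF_{0_N}, \cF_L) \simeq \cF_L,$$
and hence, by Theorem \ref{Thm-Quantization}~(\ref{TQ3}),
$$FH^\bullet(0_N, L; a, b) \simeq H^*(N \times [a, b[, \cF_L).$$

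It then remains to verify that these isomorphisms are compatible as $a, b$ vary, so that the natural restriction $FH^\bullet(0_N, L; t, +\infty) \to FH^\bullet(0_N, L)$ corresponds to the sheaf-theoretic restriction $H^*(N \times [t, +\infty[, \cF_L) \to H^*(N \times {\mathbb R}, \cF_L)$. This is essentially tautological from the functorial formulation of Theorem \ref{Thm-Quantization}~(\ref{TQ3}). Once this is granted, the definition of $c(\alpha, \cF_L)$ becomes, word for word, the definition of $c(\alpha, L)$, proving the corollary. The only point requiring any real care is the identification $\cF_{0_N} \simeq k_{N\times [0, +\infty[}$ together with the unit property for $\cstar$; both are standard in Tamarkin's framework, but a brief verification is expected.
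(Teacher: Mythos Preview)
Your proposal is correct and follows essentially the same route as the paper: the corollary is deduced directly from Theorem~\ref{Thm-Quantization}~(\ref{TQ3}) together with the canonical restriction map $H^*(N\times[t,+\infty[,\cF_L)\to H^*(N\times\mathbb R,\cF_L)$. The paper states this in one sentence before the corollary, whereas you have spelled out the implicit steps---the identification $\cF_{0_N}\simeq k_{N\times[0,+\infty[}$ via (\ref{TQ4}) and the unit property for $\cstar$---which is entirely appropriate and in fact clarifies what the paper leaves to the reader.
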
  
As a consequence the $c(\alpha, \cF_L)$ satisfies the properties of the Floer homology Lagrangian spectral invariants, and in particular the triangle inequality, since this holds in Floer homology (see \cite{Hum-Lec-Sey3}, theorem 17). However we shall sometimes need to extend the triangle  inequality to situations where $\cF$ is in $\mathcal T_0(X)$ but does not necessarily  correspond to an exact embedded Lagrangian.   

\begin{prop} [Triangle inequality in the pseudo-Tamarkin category-see \cite{Vichery-these}, proposition 8.13]\label{Prop-triangle}
Let  $\cF_1, \cF_2, \cF_3$ be complexes of sheaves in $\mathcal T_0(N)$. 
Then we have 
\begin{enumerate} 
\item 
$$ c(\alpha \cup \beta; \cF_1,\cF_3) \geq c(\alpha ; \cF_1,\cF_2) + c( \beta; \cF_2,\cF_3)$$
\item $$c(1, \cF_1,\cF_2)=- c(\mu;\cF_2,\cF_1)$$
\end{enumerate} 
\end{prop}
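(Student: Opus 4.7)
The plan is to reduce both inequalities to structural properties of the convolution product $\cstar$ and the composition law on $R\hHom^\cstar$ objects. The key structural observation is that $\cstar$ is compatible with the filtration by half-lines in the $\mathbb R$ direction: if $\supp \cF \subset N \times [a, +\infty[$ and $\supp \cG \subset N \times [b, +\infty[$, then $\supp(\cF \cstar \cG) \subset N \times [a+b, +\infty[$. This is immediate from the definition $\cF \cstar \cG = (Rs)_! d^{-1}(\cF \boxtimes \cG)$ together with $s(x_1,t_1,x_2,t_2) = (x_1, x_2, t_1+t_2)$. In particular, it induces a compatible product on relative cohomologies
$$H^*(N \times [\lambda, +\infty[, \cF) \otimes H^*(N \times [\mu, +\infty[, \cG) \longrightarrow H^*(N \times [\lambda+\mu, +\infty[, \cF \cstar \cG).$$

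For part (1), fix $\lambda < c(\alpha; \cF_1, \cF_2)$ and $\mu' < c(\beta; \cF_2, \cF_3)$; by definition of $c$ one may lift $\alpha, \beta$ to classes $\tilde\alpha, \tilde\beta$ in the corresponding relative cohomology groups of $R\hHom^\cstar(\cF_1, \cF_2)$ and $R\hHom^\cstar(\cF_2, \cF_3)$. One then composes with the morphism
$$R\hHom^\cstar(\cF_1, \cF_2) \cstar R\hHom^\cstar(\cF_2, \cF_3) \longrightarrow R\hHom^\cstar(\cF_1, \cF_3),$$
constructed abstractly by adjunction from the two-fold evaluation map $\cF_1 \cstar R\hHom^\cstar(\cF_1,\cF_2) \cstar R\hHom^\cstar(\cF_2,\cF_3) \to \cF_2 \cstar R\hHom^\cstar(\cF_2,\cF_3) \to \cF_3$. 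Combined with the filtered product above, this yields a class $\tilde\alpha \cup_\cstar \tilde\beta$ in $H^*(N \times [\lambda+\mu', +\infty[, R\hHom^\cstar(\cF_1, \cF_3))$ whose image in total cohomology is $\alpha \cup \beta$. Thus $c(\alpha \cup \beta; \cF_1, \cF_3) \geq \lambda + \mu'$, and taking suprema proves (1).

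For part (2), one inequality is formal. Applying (1) with $\cF_3 = \cF_1$, $\alpha = 1$, $\beta = \mu$, and using $1 \cup \mu = \mu$ gives
$$c(\mu; \cF_1, \cF_1) \geq c(1; \cF_1, \cF_2) + c(\mu; \cF_2, \cF_1).$$
The normalization $c(\mu; \cF_1, \cF_1) = 0$ should hold because $\id_{\cF_1}$ provides a section of $R\hHom^\cstar(\cF_1, \cF_1)$ realizing the unit $1 \in H^0(N)$ at level $0$, so that $\mu$ lifts at level $0$ (by the filtered product), and no higher level is attainable. This yields $c(1; \cF_1, \cF_2) \leq -c(\mu; \cF_2, \cF_1)$. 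The reverse inequality is a nondegeneracy statement: given $s < c(1; \cF_1, \cF_2)$, one must produce a lift of $\mu$ to $H^n(N \times [-s, +\infty[, R\hHom^\cstar(\cF_2, \cF_1))$. The strategy is to exploit Poincaré/Verdier duality on $N$ applied to the composition pairing together with the trace map $R\hHom^\cstar(\cF_1, \cF_1) \to k_N$ suitably shifted in the $\mathbb R$-direction, concluding that the pairing
$$H^*(N \times [t, +\infty[, R\hHom^\cstar(\cF_1,\cF_2)) \otimes H^{n-*}(N \times [-t, +\infty[, R\hHom^\cstar(\cF_2,\cF_1)) \longrightarrow k$$
is perfect.

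The first part is essentially formal once the behaviour of $\cstar$ with respect to the $\mathbb R$-filtration is recorded. The \emph{main obstacle} is the nondegeneracy step in (2): one has to formulate a sheaf-theoretic Poincaré duality that recovers the Floer-theoretic duality $FH^*(L_1, L_2) \cong FH^{n-*}(L_2, L_1)$ but works abstractly in $\mathcal T_0(N)$, using only that each $\cF_i$ equals $k_N$ near $+\infty$ and vanishes near $-\infty$. This should reduce to Verdier duality on $N$ combined with a careful bookkeeping of the shift structure in the $\mathbb R$-factor; it is precisely this step that is worked out in Vichery's thesis and which we would invoke to close the argument.
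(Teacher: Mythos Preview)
For part (1) your argument is the paper's: both use the filtered composition product $\cup_\cstar$ on the $R\hHom^\cstar$-objects and the commutative square between $H^*(X\times[s,+\infty[;\cF_{1,2})\otimes H^*(X\times[t,+\infty[;\cF_{2,3})\to H^*(X\times[s+t,+\infty[;\cF_{1,3})$ and its limit as $s,t\to -\infty$. The paper simply cites \cite{Viterbo-Sheaves} for the existence of the product rather than building it from adjunction as you do, but the content is identical.

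For part (2) the paper gives no argument at all --- its proof block ends after (1), relying on the reference to Vichery in the statement. Your sketch goes further, but the normalization $c(\mu;\cF_1,\cF_1)=0$ has a gap. The unit map (equivalently $\id_{\cF_1}$) does lift every class of $H^*(N)$, including $\mu$, at level $0$; this yields $c(\mu;\cF_1,\cF_1)\geq 0$. What you need in order to deduce $c(1;\cF_1,\cF_2)\leq -c(\mu;\cF_2,\cF_1)$ from the triangle inequality is the \emph{opposite} bound $c(\mu;\cF_1,\cF_1)\leq 0$, i.e.\ precisely your phrase ``no higher level is attainable''. For a general $\cF_1\in\mathcal T_0(N)$ (not the quantization of a smooth Lagrangian) this does not follow from the filtered product structure alone and is essentially of the same nature as the duality statement you are trying to prove. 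So both inequalities in (2) ultimately rest on the perfect-pairing/Verdier-duality step that you correctly flag as the main obstacle and defer to Vichery; the attempted shortcut via (1) plus a normalization does not furnish an independent proof of one direction.
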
 
\begin{proof} 
We set $\cF_{i,j}= R\hHom^{\cstar}(\cF_{L_i},\cF_{L_j})$ and we have, according to \cite{Viterbo-Sheaves} a product $$\cup_{\cstar} :\cF_{1,2} \otimes \cF_{2,3} \longrightarrow \cF_{1,3}$$ inducing the cup-product 
$$H^*( X\times [s,+\infty[ ; \cF_{1,2}) \otimes H^*( X\times [t,+\infty[ ; \cF_{2,3}) \longrightarrow H^*( X\times [s+t,+\infty[ ; \cF_{1,3})$$
Then we have the diagram

\xymatrix{
H^*( X\times [s,+\infty[ ; \cF_{1,2}) \otimes H^*( X\times [t,+\infty[ ; \cF_{2,3})\ar[d] \ar[r]^-{\cup_{\cstar}}& H^*( X\times [s+t,+\infty[ ; \cF_{1,3}) \ar[d]\\
H^*( X\times  {\mathbb R}  ; \cF_{1,2}) \otimes H^*( X\times {\mathbb R}  ; \cF_{2,3}) \ar[r]^-{\cup_{\cstar}}& H^*( X\times {\mathbb R}  ; \cF_{1,3})
}
where horizontal arrows are cup-products and vertical arrows restriction maps. So if $\alpha\otimes \beta$ is in the image of the left-hand side vertical arrow, which is equivalent to $s\leq c(\alpha, \cF_{1,2}), t \leq  c(\beta, \cF_{2,3})$, we have $\alpha\cup \beta$ is in the image of the right hand side, so that
$ s+t\leq c(\alpha\cup \beta , \cF_{1,3})$. This proves our claim. 
 \end{proof} 
 
 Let $\mu_N\in H^n(N)$ be the fundamental class of $N$ and $1_N\in H^0(N)$ the degree $0$ class. 
 \begin{defn} We set for $\cF$ in $\mathcal T_0(N)$
 $$c_+(\cF)=c(\mu_N, \cF)$$
 $$c_-(\cF)=c(1_N, \cF)$$
 $$\gamma (\cF)=c_+(\cF)-c_-(\cF)$$
 We set $\mathbb D \cF$ to be the Verdier dual of $\cF$ and $s(x,t)=(x,-t)$ and $\check{\cF}$ is quasi-isomorphic to  $0 \to k_{N\times {\mathbb R} } \to s^{-1}(\mathbb D \cF)\to 0 $.
 \end{defn} 
 We notice that $SS(\mathbb D \cF)=-SS(\cF)$ where for $A \subset T^*(N\times {\mathbb R})$, we set $-A=\{(x,-p, t, -\tau) \mid (x,p,t,\tau)\in A\}$ (see \cite{K-S} Exercise V.13, p. 247). As a result,  $\check{\cF}_L=\cF_{-L}$ where $-L=\{(q,-p) \mid (q,p)\in L\}$. The triangle inequality then implies
 \begin{prop} \label{Prop-3.6}
 We have for $\cF$ constructible in $\mathcal T_0(N)$
  \begin{enumerate} 
 \item \label{Prop-3.6-i} $c_+(\cF) \geq c_-(\cF)$ 
 \item \label{Prop-3.6-ii}
 $c_+(\cF)=-c_-(\check{\cF})$ 
 $c_-(\cF)=-c_+(\check{\cF})$ 
 so that $\cF \longrightarrow \check{\cF}$ is an $\gamma$-isometry. 
 \end{enumerate} 
 And of course if $L\in \mathcal L(T^*N)$ we have  
 $c_\pm(\cF_L)=c_\pm(L)$ and $\gamma(\cF_L)=\gamma(L)$.
 \end{prop}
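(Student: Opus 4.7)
My plan is to deduce both inequalities from the triangle inequality and its duality clause in Proposition~\ref{Prop-triangle}, together with the identification, implicit in the definition of $\check{\cF}$, of this operation as a Verdier-type dual for the convolution $\cstar$. The final identities $c_\pm(\cF_L)=c_\pm(L)$ and $\gamma(\cF_L)=\gamma(L)$ will follow directly from the Corollary stated just before Proposition~\ref{Prop-triangle}, which identifies the sheaf-theoretic and Floer-theoretic spectral numbers.

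For part~(\ref{Prop-3.6-i}), I will read $c_\pm(\cF)$ as special cases of $c(\alpha;\mathbb{1},\cF)$, where $\mathbb{1}$ is a unit for $\cstar$ on $\mathcal T_0(N)$ (so $R\hHom^{\cstar}(\mathbb{1},\cF)\simeq \cF$). Applying Proposition~\ref{Prop-triangle}(1) to $\mu_N = 1_N\cup \mu_N$ with $(\cF_1,\cF_2,\cF_3)=(\mathbb{1},\cF,\cF)$ yields
$$c_+(\cF) \;\geq\; c_-(\cF) + c(\mu_N;\cF,\cF).$$
It then suffices to prove $c(\mu_N;\cF,\cF) \geq 0$, which I would obtain by first applying the triangle inequality to $1_N\cup 1_N = 1_N$ with $\cF_i=\cF$ (forcing $c(1_N;\cF,\cF)\leq 0$), and then converting this via Proposition~\ref{Prop-triangle}(2) into $c(\mu_N;\cF,\cF)\geq 0$.

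For part~(\ref{Prop-3.6-ii}), I apply Proposition~\ref{Prop-triangle}(2) with $\cF_1=\mathbb{1}$, $\cF_2=\cF$, obtaining $c_-(\cF) = -c(\mu_N;\cF,\mathbb{1})$ and symmetrically $c_+(\cF) = -c(1_N;\cF,\mathbb{1})$. The essential step is the identification $R\hHom^{\cstar}(\cF,\mathbb{1}) \simeq \check{\cF}$, which would translate these into $c_-(\cF) = -c_+(\check{\cF})$ and $c_+(\cF) = -c_-(\check{\cF})$. The cone definition $\check{\cF} \simeq (k_{N\times\mathbb R} \to s^{-1}(\mathbb D\cF))$ is tailored for exactly this: Verdier duality converts $R\hHom^{\cstar}(\cF,\mathbb{1})$ into an expression involving $\mathbb D\cF$, the pull-back by $s(x,t)=(x,-t)$ flips the sign of $\tau$ so that the singular support returns to $\{\tau\geq 0\}$ (using $SS(\mathbb D\cF)=-SS(\cF)$ as recalled in the excerpt), and the cone with $k_{N\times\mathbb R}$ enforces the asymptotic behaviour at $\pm\infty$ required to land in $\mathcal T_0(N)$. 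The $\gamma$-isometry statement then follows formally from $\gamma(\check{\cF}) = c_+(\check{\cF})-c_-(\check{\cF}) = -c_-(\cF)+c_+(\cF) = \gamma(\cF)$.

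I expect the principal technical obstacle to be the identification $R\hHom^{\cstar}(\cF,\mathbb{1}) \simeq \check{\cF}$: it is essentially a Verdier duality calculation for the convolution $\cstar = (Rs)_!\, d^{-1}(-\boxtimes-)$ on $N\times\mathbb R$, and it is this computation that motivates the precise shape of the cone defining $\check{\cF}$. Once this identification is granted (or imported from standard Tamarkin-category arguments), both parts of the proposition reduce to one-line manipulations with Proposition~\ref{Prop-triangle}, and the concluding statement for $\widetilde L\in \mathcal L(T^*N)$ is immediate from the Corollary cited above applied with $\alpha=1_N$ and $\alpha=\mu_N$.
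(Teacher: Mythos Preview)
Your proposal is correct and follows essentially the same approach as the paper: the text immediately preceding the Proposition says ``The triangle inequality then implies'', and the paper's proof records only the biduality $\check{\check{\cF}}=\cF$ (which reduces the second equality in (\ref{Prop-3.6-ii}) to the first). You have simply spelled out the chain of triangle-inequality manipulations and the identification $R\hHom^{\cstar}(\cF,\mathbb{1})\simeq\check{\cF}$ that the paper leaves implicit.
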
 
 \begin{proof} 
 Note that for $\cF$ cohomologically  constructible, we have $\check{\check \cF}=\cF$
 
 \end{proof} 
 Of course if $L\in \mathfrak L (T^*N)$ the $c_\pm(L)$ are not well defined (they are only defined up to constant), however $\gamma(L)$ is well-defined. 
\section{The inverse reduction inequality and applications}\label{Section-7}
The following inequality was proved in \cite{Viterbo-STAGGF} using generating functions (so for Lagrangians Hamiltonianly isotopic to the zero section), but the same proof holds for general exact Lagrangians, using sheaves
\begin{prop}[Reduction inequality, see \cite{Viterbo-STAGGF}, Prop. 5.1]\label{Prop-reduction-inequality}
Let $\widetilde L_1,\widetilde L_2$ be elements in $\mathcal L (T^*(X\times Y))$ and $(\widetilde L_1)_x, (\widetilde L_2)_x$ be their reduction by $T_x^*X\times T^*Y$, assumed to be in
$\mathcal L (T^*Y)$. We then have
$$\sup_{x\in X} c_+ ((\widetilde L_1)_x,(\widetilde L_2)_x) \leq c_+ (\widetilde L_1;\widetilde L_2))$$
$$c_- (\widetilde L_1,\widetilde L_2) \leq \inf_{x\in X}c_-((\widetilde L_1)_x,(\widetilde L_2)_x)$$
and as a consequence
$$\sup_{x\in X} \gamma ((\widetilde L_1)_x,(\widetilde L_2)_x) \leq \gamma (\widetilde L_1,\widetilde L_2)$$
\end{prop}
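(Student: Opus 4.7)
\emph{Strategy.} The proposition contains three inequalities, one for $c_+$, one for $c_-$, and the $\gamma$-version (which is immediate from the other two). I would first establish the $c_-$ inequality directly from the sheaf formalism of Section~\ref{Section-4}, and then deduce the $c_+$ inequality by applying the former to the swapped pair $(\widetilde L_2, \widetilde L_1)$ together with the duality $c_-(\widetilde L_1, \widetilde L_2) = -c_+(\widetilde L_2, \widetilde L_1)$ furnished by Proposition~\ref{Prop-triangle}(ii).

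\emph{Identification of reductions on the sheaf side.} Let $\cH = R\hHom^{\cstar}(\cF_{L_1}, \cF_{L_2})$ on $(X \times Y) \times \mathbb R$ and $\cH_x = R\hHom^{\cstar}(\cF_{(L_1)_x}, \cF_{(L_2)_x})$ on $Y \times \mathbb R$. The key claim is that the closed inclusion $i_x \colon \{x\} \times Y \times \mathbb R \hookrightarrow (X \times Y) \times \mathbb R$ induces a natural isomorphism $i_x^{-1}\cF_{L_j} \simeq \cF_{(L_j)_x}$ (once the Maslov shift intrinsic to the reduction is properly incorporated into the grading). Indeed, $SS^{\bullet}(\cF_{L_j}) = \widehat{L_j} \subset \{\tau > 0\}$ while the conormal to $\{x\}\times Y \times \mathbb R$ lies in $\{\tau = 0\}$, so $i_x$ is automatically non-characteristic for $\cF_{L_j}$; the Kashiwara--Schapira non-characteristic inverse image theorem yields $SS^{\bullet}(i_x^{-1}\cF_{L_j}) \subseteq \widehat{(L_j)_x}$, while the boundary conditions of (\ref{TQ2}) are preserved by restriction. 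Uniqueness (\ref{TQ4}) then produces the identification, which in turn gives a natural comparison map $i_x^{-1}\cH \to \cH_x$.

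\emph{The $c_-$ inequality and its dual.} Pulling back along $i_x$ on cohomology with supports and composing with the comparison map above yields the commutative square
$$
\begin{array}{ccc}
H^*((X\times Y)\times [t,+\infty[, \cH) & \longrightarrow & H^*(\{x\}\times Y\times [t,+\infty[, \cH_x) \\
\downarrow & & \downarrow \\
H^*(X \times Y) & \stackrel{i_x^*}{\longrightarrow} & H^*(Y)
\end{array}
$$
in which the vertical arrows are the canonical identifications coming from (\ref{TQ2}). Since $i_x^*(1_{X\times Y}) = 1_Y$, any $t \leq c_-(\widetilde L_1, \widetilde L_2)$ produces a lift of $1_{X\times Y}$ to the upper-left group, whose image under the top arrow witnesses $t \leq c_-((\widetilde L_1)_x, (\widetilde L_2)_x)$; taking the infimum over $x$ gives the $c_-$ inequality. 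The same argument with $\widetilde L_1$ and $\widetilde L_2$ interchanged, combined with Proposition~\ref{Prop-triangle}(ii), yields the $c_+$ inequality, and the $\gamma$ statement then follows by subtraction.

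\emph{Main obstacle.} The technical heart is the identification $i_x^{-1}\cF_{L_j} \simeq \cF_{(L_j)_x}$: while the singular-support estimate and the boundary conditions are handled formally, one must verify that the grading $\widetilde G_{(L_j)_x}$ of the reduced Lagrangian is compatible, up to the expected Maslov shift, with the grading inherited by $i_x^{-1}\cF_{L_j}$, so that the bottom edge of the diagram really sends $1_{X\times Y}$ to $1_Y$ without a spurious degree shift. The hypothesis $(\widetilde L_j)_x \in \mathcal L(T^*Y)$ in the statement is precisely what guarantees that the naturality one is invoking is meaningful on the sheaf side.
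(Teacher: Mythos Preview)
Your proposal is correct and follows essentially the same route as the paper: one works with the sheaf $\cH=R\hHom^{\cstar}(\cF_{L_1},\cF_{L_2})\in\mathcal T_0(X\times Y)$, uses that restriction along $i_x$ sends $1_{X\times Y}$ to $1_Y$ to obtain the $c_-$ inequality, and then derives the $c_+$ inequality by the duality $c_+=-c_-$ of Proposition~\ref{Prop-triangle}. The paper packages the restriction step as Proposition~\ref{Prop-4.2}/Corollary~\ref{Cor-4.3} and simply asserts that $(\cF_{L_j})_x$ quantizes $(\widetilde L_j)_x$, whereas you spell out the non-characteristic argument and flag the grading compatibility; this extra care is welcome but does not constitute a different approach.
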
 
In our setting, this will follow immediately from 
\begin{prop} \label{Prop-4.2}
Let $\cF$ in $\mathcal T_0(X)$. 
Then setting $\cF_V$ to be the restriction of $\cF$ to the closed submanifold $V$ of $X$ we have
$$c_-(\cF) \leq c_-(\cF_V)$$
$$c_+(\cF) \geq c_+(\cF_V)$$
hence 
$$ \gamma(\cF_V)\leq \gamma(\cF) $$
In particular if $\widetilde L_V \in \mathcal L(T^*V)$ is the reduction of $\widetilde L \in \mathcal L(T^*X)$ we have
$$c_-(\widetilde L) \leq c_-(\widetilde L_V)$$
$$c_+(\widetilde L) \geq c_+(\widetilde L_V)$$
hence 
$$ \gamma(\widetilde L_V)\leq \gamma(\widetilde L) $$

\end{prop}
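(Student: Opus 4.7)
The plan is to prove the two inequalities $c_-(\cF)\leq c_-(\cF_V)$ and $c_+(\cF)\geq c_+(\cF_V)$ separately. The $\gamma$-inequality follows automatically, and the Lagrangian corollary is obtained by applying the sheaf version to $\cF=\cF_{\widetilde L}$ and invoking $c_\pm(\cF_{\widetilde L})=c_\pm(\widetilde L)$ from Proposition \ref{Prop-3.6}. The two inequalities are driven by two different natural maps on cohomology: a pullback for $c_-$, and a Gysin-type morphism for $c_+$.

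For $c_-$, the closed embedding $i\colon V\times{\mathbb R}\hookrightarrow X\times{\mathbb R}$ satisfies $i^{-1}\cF=\cF_V$, and pullback is natural with respect to the truncation maps. Using the identifications $H^*(X\times{\mathbb R},\cF)\simeq H^*(X)$ and $H^*(V\times{\mathbb R},\cF_V)\simeq H^*(V)$ (both hold because $\cF$ and $\cF_V$ satisfy the boundary conditions of $\mathcal T_0$), the right-hand vertical map in the commutative square
\[
\begin{array}{ccc}
H^0(X\times[t,+\infty[,\cF) & \longrightarrow & H^0(X) \\
\big\downarrow & & \big\downarrow \\
H^0(V\times[t,+\infty[,\cF_V) & \longrightarrow & H^0(V)
\end{array}
\]
is the topological pullback $i^*$, which carries $1_X$ to $1_V$. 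Hence if $1_X$ lies in the image of the top row then $1_V$ lies in the image of the bottom row, and taking $\sup$ over $t$ yields $c_-(\cF)\leq c_-(\cF_V)$.

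For $c_+$ the pullback is insufficient, since $i^*\mu_X\neq \mu_V$ (they sit in different degrees, $d:=\codim(V,X)$ apart). We use the shriek functor instead. The non-characteristic condition on $\cF$ along $V\times{\mathbb R}$, implicit in the hypothesis that $\widetilde L_V$ is a well-defined exact Lagrangian, together with the standard formula for closed embeddings of smooth oriented manifolds, yields
\[
i^!\cF\simeq \cF_V[-d].
\]
This produces the local cohomology isomorphism $H^n_{V\times{\mathbb R}}(X\times U,\cF)\simeq H^{n-d}(V\times U,\cF_V)$, compatibly for $U={\mathbb R}$ and $U=[t,+\infty[$. Composing with the natural map $R\Gamma_{V\times{\mathbb R}}\cF\to\cF$ gives a Gysin morphism $H^{n-d}(V\times U,\cF_V)\to H^n(X\times U,\cF)$ which, in the limit $U={\mathbb R}$ and near $+\infty$ (where $\cF\simeq k_X$ and $\cF_V\simeq k_V$), recovers the classical Gysin map $i_!\colon H^{n-d}(V)\to H^n(X)$ carrying $\mu_V$ to $\mu_X$. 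A diagram chase parallel to the $c_-$ case then yields $c_+(\cF)\geq c_+(\cF_V)$.

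The main technical hurdle is establishing $i^!\cF\simeq\cF_V[-d]$ in the constructible/non-characteristic microlocal setting, and verifying that the induced Gysin morphism on sheaf cohomology is compatible, at the $+\infty$ end, with the classical Gysin map, so that the distinguished class $\mu_V$ is indeed carried to $\mu_X$ through the commutative diagrams used above.
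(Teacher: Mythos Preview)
Your argument for $c_-$ matches the paper's exactly: restriction along the closed embedding sends $1_X$ to $1_V$ and is compatible with the filtration, so $c_-(\cF)\leq c_-(\cF_V)$.

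For $c_+$, however, the paper takes a completely different and much shorter route. Instead of building a Gysin map, it invokes the duality of Proposition~\ref{Prop-3.6}\,(\ref{Prop-3.6-ii}): $c_+(\cF)=-c_-(\check\cF)$. Applying the already-proved $c_-$ inequality to $\check\cF$ (and using that restriction to $V$ commutes with $\cF\mapsto\check\cF$, which on the Lagrangian side is just $(-L)_V=-(L_V)$) immediately yields $c_+(\cF)\geq c_+(\cF_V)$. This is a one-line reduction to the $c_-$ case.

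Your Gysin approach is not wrong in spirit, but it is both heavier and narrower. You yourself flag the ``main technical hurdle'': establishing $i^!\cF\simeq\cF_V[-d]$ requires $V\times{\mathbb R}$ to be non-characteristic for $\cF$, and this is \emph{not} assumed in the sheaf-level statement (it only enters implicitly in the Lagrangian corollary, where $\widetilde L_V$ is assumed to lie in $\mathcal L(T^*V)$). So as written your argument proves the ``in particular'' part but does not fully cover the first part of the proposition for an arbitrary $\cF\in\mathcal T_0(X)$. The paper's duality trick sidesteps this issue entirely, at the mild cost of assuming constructibility (already needed for Proposition~\ref{Prop-3.6}).
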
 
\begin{proof} 
The map 
\begin{gather*} H^0(X) \simeq H^0(X\times {\mathbb R} ; \cF) \longrightarrow H^0(V\times {\mathbb R} ; \cF)=H^0(V\times {\mathbb R} ,\cF)\simeq H^0(V) 
\end{gather*} 
coincides with the map   $H^0(X) \longrightarrow H^0(V)$, induced by the inclusion. This maps sends  $1_{X}$ to $1_V$ and we thus get the first inequality. The second one follows from the equality 
$c_+(\cF_L)=-c_-(\check{\cF_L})$. \end{proof} 
\begin{cor} \label{Cor-4.3}
Let $\cF$ in $\mathcal T_0(X\times Y)$.
Then setting $\cF_x$ to be the restriction of $\cF$ to the closed submanifold $\{x\} \times Y$, we have
$$c_-(\cF) \leq \inf_{x\in X}c_-(\cF_x)$$
$$c_+(\cF) \geq \sup_{x\in X}c_+(\cF_x)$$
hence 
$$ \sup_{x\in X} \gamma(\cF_x)\leq \gamma(\cF) $$
\end{cor}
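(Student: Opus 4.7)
The plan is to deduce this as a direct consequence of Proposition \ref{Prop-4.2}, treating $\{x\} \times Y$ as a closed submanifold of $X \times Y$ for each fixed $x \in X$.

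First, for each $x \in X$, the subset $V_x := \{x\} \times Y$ is a closed submanifold of $X \times Y$. I would apply Proposition \ref{Prop-4.2} to $\cF \in \mathcal{T}_0(X \times Y)$ with this choice of submanifold; the restriction $\cF_{V_x}$ agrees by definition with $\cF_x$, viewed as an object of $\mathcal{T}_0(Y)$ (one should briefly check membership in $\mathcal{T}_0(Y)$: the condition $SS(\cF) \subset \{\tau \geq 0\}$ restricts well since the $\tau$-coordinate is unchanged under restriction in the $X$-factor, and the conditions at $\pm \infty$ are inherited from $\cF$). This immediately yields, for each individual $x \in X$,
\[
c_-(\cF) \leq c_-(\cF_x), \qquad c_+(\cF) \geq c_+(\cF_x).
\]

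Then I would take the infimum over $x$ in the first inequality and the supremum over $x$ in the second, since the left-hand sides are independent of $x$. Subtracting the inequalities gives $c_+(\cF) - c_-(\cF) \geq c_+(\cF_x) - c_-(\cF_x)$ for every $x$, and passing to the supremum yields $\gamma(\cF) \geq \sup_{x \in X} \gamma(\cF_x)$.

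There is no real obstacle here — the only point that requires a brief verification is the one above, namely that restriction of an object of $\mathcal{T}_0(X \times Y)$ along $\{x\} \times Y \times \mathbb{R} \hookrightarrow X \times Y \times \mathbb{R}$ again belongs to $\mathcal{T}_0(Y)$, so that the spectral invariants $c_\pm(\cF_x)$ are defined and Proposition \ref{Prop-4.2} applies. Once this is noted, the corollary follows formally.
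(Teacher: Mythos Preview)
Your proof is correct and follows essentially the same approach as the paper: apply Proposition \ref{Prop-4.2} with $V=\{x\}\times Y$ for each $x$, then take the infimum/supremum over $x$. The paper derives the $\gamma$ inequality via the chain $\sup_x\gamma(\cF_x)\leq \sup_x c_+(\cF_x)-\inf_x c_-(\cF_x)\leq \gamma(\cF)$, whereas you subtract pointwise and then take the sup, but these are equivalent trivialities.
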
 
\begin{proof} 
This follows from applying Proposition \ref{Prop-4.2} to the family of submanifolds $V=\{x\}\times Y$. The last one of course follows from the inequality
\begin{gather*} \sup_x \gamma (\cF_x) = \sup_x( c_+ (\cF_x)-c_-(\cF_x)) \leq \\ \sup_xc_+(\cF_x) - \inf_x c_-(\cF_x) \leq c_+(\cF) - c_-(\cF)=\gamma (\cF)
\end{gather*} 

\end{proof} 
\begin{proof} [Proof of Proposition \ref{Prop-reduction-inequality}]
Let $\cF_j=\cF_{L_j}$ in $D^b(N \times {\mathbb R} )$  be the quantizations of the $\widetilde L_j$. Then  $\cF_{j,x}$ restriction of $\cF_j$ to $\{x\}\times Y$ defines $(\widetilde L_j)_x$ the reduction of $L_j$. Our result  is then equivalent to 
$$ \sup_{x\in X} c_+ ((\cF_1)_x,(\cF_2)_x) \leq c_+ (\cF_1;\cF_2))$$
Since $c_+ (\cF_1;\cF_2))=-c_- (\cF_2;\cF_1))$ setting 
$\cF=R\hHom^\cstar(\cF_1, \cF_2)$
this is equivalent to 
$$c_-(\cF) \leq \inf _{x\in X} c_- (\cF_x)$$
Since  $\cF$ is in $\mathcal T_0(X\times Y)$, we have by Corollary \ref{Cor-4.3}, that
 for all $x\in X$
$$  c_+ (\cF_x) \leq c_+ (\cF)$$ and
$$c_-(\cF) \leq c_- (\cF_x)$$
which proves our Proposition. 
\end{proof} 
Note that Proposition \ref{Prop-4.2} implies the following: let $f$ be a smooth map from $X$ to $Y$ and $\Lambda_f$ be the Lagrangian correspondence in $T^*(X\times Y)$ $$\Lambda_f= \{ (x,p_x,y,p_y)\in T^*X\times T^*Y\mid y=f(x),  p_x=p_y\circ df(x)\}$$  (i.e. $\Lambda_f$ is a Lagrangian in $\overline {T^*X}\times T^*Y$, note that it is homogeneous, so we may set the primitive of the Liouville form to be $0$). 

For $\widetilde L\in \mathcal L(T^*X)$ we denote by $\Lambda_f\widetilde L$ the image of $\widetilde L$ by $\Lambda_f$ in $T^*Y$, defined by 
$$\Lambda_f\widetilde L= \{(y,p_y) \mid \exists (x,p_x)\in L, (x,p_x,y,p_y)\in \Lambda_f\}$$
Conversely for $\tilde L\in \mathcal L(T^*Y)$ we have
$$\Lambda_f^{-1} \widetilde L= \{(x,p_x) \mid \exists (y,p_y) \in L, y=f(x), p_x=p_y\circ df(x)\}$$

We will assume  $f$ is non-characteristic for $\widehat L$ or equivalently $\Lambda_{f}$ is transverse to $0_{X}\times \widehat L$ away from the zero section (note that the sets are homogeneous). This is a generic property, and a small perturbation of $L$ or $f$ makes $f$ non-characteristic for $L$. 
\begin{rem} 
It is easy to see that if $f$ is a submersion (i.e. $df(x)$ is surjective for all $x\in X$), then $\Lambda_{f}^{-1}\widetilde L$ is always embedded, hence $\Lambda_{f}^{-1}$ defines a map from $\mathcal L (T^{*}Y)$ to $\mathcal L (T^{*}X)$ 
\end{rem} 
As a result $\Lambda_f^{-1}L$ is Lagrangian and exact  (and of course if $L$ is homogeneous, so is $\Lambda_f^{-1}L$). 
We claim
\begin{prop} \label{Prop-4.4}
Let $\cF \in \mathcal T_0(Y)$ be a constructible sheaf, and $f\in C^0(X,Y)$, then we have for $\alpha \in H^*(X), \beta \in H^*(Y)$
$$c(\beta, \cF) \leq c(f^*(\beta), f^{-1}\cF)$$
$$c(\alpha, f^{!}\cF) \leq c(f_!(\alpha),  \cF)$$
As a result, applying the above to $\cF_L$, we have for $\widetilde L\in \mathcal L(T^*Y)$ and $f$ a smooth map from $X$ to $Y$ such that $\Lambda_{f}$ is transverse (away from the zero section) to $L$ (i.e. $L$ is non-characteristic for $f$), then  $SS(f^{-1}(\cF))=\Lambda_{f}^{-1}\circ L$ and for  $\alpha \in H^*(X), \beta \in H^*(Y)$
$$c(\beta, \widetilde L) \leq c(f^*(\beta), \Lambda_f^{-1}\circ \widetilde L)$$
$$c(\alpha, \Lambda_f^{-1}\widetilde L) \leq c(f_!(\alpha),  \widetilde L)$$
\end{prop}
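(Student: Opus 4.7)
The plan is to reduce both inequalities to naturality of a commutative square built from the pullback/pushforward functors and the restriction maps $H^*(\cdot \times [t, +\infty[, \cdot) \to H^*(\cdot \times \mathbb R, \cdot)$, in close analogy with the proof of Proposition \ref{Prop-4.2}.

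For the first inequality, I would first verify that $f^{-1}\cF$ (shorthand for $(f \times \id_{\mathbb R})^{-1}\cF$) lies in $\mathcal T_0(X)$: the vanishing/constant-sheaf conditions near $\{t = \pm\infty\}$ are local in the $t$ variable and hence preserved, while the condition $SS \subset \{\tau \geq 0\}$ is preserved because $f \times \id_{\mathbb R}$ acts trivially on the $\mathbb R$-factor. The unit of adjunction $\cF \to R(f \times \id)_* f^{-1}\cF$ then yields the commutative diagram
\begin{displaymath}
\xymatrix{
H^*(Y \times [t, +\infty[, \cF) \ar[r] \ar[d] & H^*(X \times [t, +\infty[, f^{-1}\cF) \ar[d] \\
H^*(Y \times \mathbb R, \cF) \ar[r] & H^*(X \times \mathbb R, f^{-1}\cF)
}
\end{displaymath}
whose bottom row is identified with $f^*: H^*(Y) \to H^*(X)$ through the $\mathcal T_0$-isomorphisms. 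If $t \leq c(\beta, \cF)$, then $\beta$ admits a lift to the upper left corner; naturality transports this lift to the upper right corner, giving $t \leq c(f^*\beta, f^{-1}\cF)$ and hence the first inequality.

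For the second inequality, I would mirror this argument using the counit $R(f \times \id)_! f^!\cF \to \cF$ of the $(Rf_!, f^!)$ adjunction, which supplies a dual commutative square whose bottom row realizes the Gysin/umkehr map $f_!: H^*(X) \to H^*(Y)$. Alternatively (and more cleanly), one derives it from the first inequality by Verdier duality: combining the identity $\mathbb D_X f^! = f^{-1} \mathbb D_Y$ with $c_\pm(\cF) = -c_\mp(\check \cF)$ from Proposition \ref{Prop-3.6}\eqref{Prop-3.6-ii} converts the statement for $f^!\cF$ and $f_!\alpha$ into the already proved statement for $f^{-1}\check\cF$ and $f^*(\text{Poincar\'e dual of }\alpha)$.

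For the Lagrangian consequence, the point is to identify $SS^\bullet(f^{-1}\cF_L)$ with $\Lambda_f^{-1}\circ \widehat L$. The non-characteristic hypothesis on $f$ with respect to $\widehat L$ is precisely the condition under which K-S's microsupport estimate for inverse image (\cite{K-S}, Prop.~5.4.13) becomes an equality away from the zero section, and the explicit form of the maps $f_\pi$, $f_d$ shows that $f_d f_\pi^{-1}(\widehat L)$ is the homogenization of $\Lambda_f^{-1}L$. Substituting $\cF = \cF_L$ into the sheaf-level inequalities and invoking Theorem \ref{Thm-Quantization} then yields the Lagrangian inequalities.

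The main obstacle I anticipate is the second inequality: $f^! k_Y$ is generally not $k_X$, so $f^!\cF$ need not lie in $\mathcal T_0(X)$ as stated, and one has to either work in a shifted/twisted Tamarkin category or route the argument through Verdier duality to make the left-hand side $c(\alpha, f^!\cF)$ unambiguously defined and match the right-hand side with $f_!(\alpha)$. The duality approach is the least painful, since it reuses the first inequality rather than re-proving a parallel diagram chase for $Rf_!$.
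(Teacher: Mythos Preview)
Your proposal is correct and follows essentially the same route as the paper: the unit $\id \to R\tilde f_*\tilde f^{-1}$ and counit $R\tilde f_!\tilde f^! \to \id$ of the relevant adjunctions, combined with the obvious commutative squares, give the two inequalities by a diagram chase; the Lagrangian statement then follows by specializing to $\cF_L$.

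Two small points of difference are worth noting. First, for the equality $SS^\bullet(f^{-1}\cF_L)=\Lambda_f^{-1}\widehat L$, the paper does not claim that the K--S estimate becomes an equality under non-characteristicity alone: it uses the inclusion from \cite{K-S} (cor.~6.4.4) together with the fact that a smooth Lagrangian contains no proper coisotropic subset (Guillermou--Viterbo) to upgrade inclusion to equality. Your phrasing ``becomes an equality'' hides this step. Second, for the second inequality the paper takes your first option (the direct counit argument with $\tilde f_\#$), not the Verdier-duality detour; it also sidesteps the issue you raise about $f^!\cF \notin \mathcal T_0(X)$ by simply observing that at $t=+\infty$ the map $\tilde f_\#$ agrees with the classical umkehr map $f_!$, so the identification $H^*(X\times\mathbb R,\tilde f^!\cF)\simeq H^*(X)$ is implicit rather than via membership in $\mathcal T_0$. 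Your concern there is legitimate, and your duality alternative is a clean way to resolve it.
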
 
\begin{proof} The proof is a sheafification of Proposition 7.38 in \cite{Viterbo-book}. We denote by $\tilde f$ the map $f\times \Id_{ {\mathbb R} }$ from $X \times {\mathbb R} $ to $Y \times {\mathbb R} $.  The natural transformation $ \id \longrightarrow (R\tilde f)_*\tilde 
f^{-1}$,  yields a map 
$$H^*(Y\times ]-\infty, c[, \cF) \longrightarrow H^*(Y\times ]-\infty, c[, (R\tilde f)_*\tilde f^{-1}(\cF))$$ but the right-hand side equals $H^*(X\times ]-\infty, c[, \tilde f^{-1}(\cF))$, so we have a morphism
$$f^{\#}: H^*(Y\times ]-\infty, c[, \cF) \longrightarrow  H^*(X\times ]-\infty, c[, \tilde f^{-1}(\cF))$$  
(for constant sheaves it corresponds to the pull-back $f^*$ in ordinary cohomology) hence a commutative diagram
 \begin{center}
\leavevmode
\xymatrix{
H^*(Y\times  {\mathbb R} , \cF)  \ar[d]^{\tilde f^{\#}}\ar[r] & H^*(Y\times ]-\infty, c[, \cF) \ar[d]^{\tilde f^{\#}} \\
H^*(X\times {\mathbb R} , \tilde f^{-1}\cF)  \ar[r] & H^*(X\times ]-\infty, c[, \tilde f^{-1}\cF) \\
}
\end{center}
For $\beta \in H^*(Y)= H^*(Y\times  {\mathbb R} , \cF_L)$, if $c \leq c(\beta, \cF)$ the image of $\beta$ in $H^*(Y\times ]-\infty, c[, \cF_L)$ vanishes,  i.e. $c \leq c(\beta, \widetilde  L)$ then the image of $f^*(\beta)$ in $H^*(X\times ]-\infty, c[, \tilde f^{-1}\cF_L)$ vanishes, i.e. $c \leq c(\beta, \Lambda_f^{-1}\widetilde L)$. Thus we get the first inequality.

For the second one, we similarly have a natural transformation $Rf_!f^! \longrightarrow \Id$ so that we have a map
$$H^*(Y\times ]-\infty, c[, R\tilde f_!\tilde f^!\cF) \longrightarrow  H^*(Y\times ]-\infty, c[, \cF)$$
hence 
$$\tilde f_{\#}: H^*(X\times ]-\infty, c[, \tilde f^!\cF) \longrightarrow  H^*(Y\times ]-\infty, c[, \cF)$$
since here $X,Y$ are compact. 

But we thus get the
 \begin{center}
\leavevmode
\xymatrix{
H^*(Y\times  {\mathbb R} , \cF)  \ar[r] & H^*(Y\times ]-\infty, c[, \cF) \\
H^*(X\times {\mathbb R} , \tilde f^!\cF) \ar[u]^{\tilde f_{\#}} \ar[r] & H^*(X\times ]-\infty, c[, \tilde f^{!}\cF)\ar[u]^{\tilde f_{\#}} \\
}
\end{center}
and taking $\alpha \in H^*(X)\simeq H^*(X\times {\mathbb R} , \cF)$, and using the fact that $f_{\#}$ coincides with the usual umkehr map $f_!$ at $+\infty$ (i.e. for $\cF=k_X$), 
we see that 
$$c(\alpha, f^!\cF) \leq c(f_!(\alpha), \cF)$$

Now if   $\cF_L$ is the quantization of $\widetilde L$, then ${\tilde f}^{-1}(\cF_L)$ and $\tilde f^!\cF_L$ have their singular supports, 
$SS({\tilde f}^{-1}(\cF_L))$ and $SS({\tilde f}^{-1}(\cF_L))$  both contained in $\Lambda_f^{-1}(SS(\cF_L)=\Lambda_f^{-1}L$ (see corollary 6.4.4, p. 270 in \cite{K-S}). Since $\Lambda_f^{-1}L$ is a smooth Lagrangian, it contains no proper coisotropic set (see Proposition 9.13 in \cite{Guillermou-Viterbo}), and thus the inclusions are equalities, and  we get the inequalities in  second statement.
\end{proof} 
\begin{cor}\label{Cor-5.5}
If $\cF$ is constructible and belongs to $\mathcal T_0$ we have
\begin{enumerate} 
\item \label{Cor-5.5-i} 
$$\gamma (f^{-1}\cF)\leq \gamma(\cF)$$
  \item\label{Cor-5.5-ii} If $f:X \longrightarrow Y$ satisfies the following condition :
there exists $\alpha \in H^{m-n}(X)$ such that $f^!(\alpha)=1$ or equivalently
 $f^*(\mu_Y) \neq 0$ in $H^*(X)$
 then 
$$\gamma (f^{-1}\cF)= \gamma(\cF)$$
In particular for $L$ in $\mathcal L(T^*Y)$ if $f$ is non-characteristic for $L_{1}, L_{2}$ so that $\Lambda_f^{-1}L_{1},  \Lambda_f^{-1}L_{2} \in \mathcal L(T^*X)$, we obtain 
$$\gamma (\Lambda_f^{-1}L)\leq \gamma(L)$$
and under assumption (\ref{Cor-5.5-ii}) 
$$\gamma (\Lambda_f^{-1}L)=\gamma(L)$$
\end{enumerate} 
\end{cor}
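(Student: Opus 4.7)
The plan is to prove (i) and (ii) by combining the two inequalities of Proposition \ref{Prop-4.4} with a ``cup-product respects filtration'' principle. Specializing the triangle inequality of Proposition \ref{Prop-triangle} to $\cF_1 = \cF_2 = k_{X\times[0,+\infty)}$ yields, for every $\alpha \in H^*(X)$ and $v \in H^*(X\times\mathbb R, \cG)$, the module-structure inequality $c(\alpha\cup v, \cG) \geq c(v, \cG)$. This is what will let us compare different cohomology classes on the same sheaf.

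For part (i), the bound $c_-(\cF) \leq c_-(f^{-1}\cF)$ is immediate from Proposition \ref{Prop-4.4} with $\beta = 1_Y$, since $f^*1_Y = 1_X$. For the matching upper bound $c_+(f^{-1}\cF) \leq c_+(\cF)$, I would pass through Verdier duality: the identity $\mathbb D\circ f^{-1} \simeq f^!\circ \mathbb D$, together with the commutation of the reflection $s\colon t\mapsto -t$ with $f^{-1}$, yields $\check{f^{-1}\cF}\simeq f^!\check{\cF}$ up to the constant-sheaf summand entering the definition of $\check{}$. Applying the second inequality of Proposition \ref{Prop-4.4} to $\check{\cF}$ with $\alpha = 1_X$ and invoking Proposition \ref{Prop-3.6}(\ref{Prop-3.6-ii}), $c_\pm(\cG) = -c_\mp(\check{\cG})$, rewrites the resulting bound as $c_+(f^{-1}\cF) \leq c_+(\cF)$. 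Subtracting gives $\gamma(f^{-1}\cF) \leq \gamma(\cF)$.

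For part (ii), the hypothesis provides $\alpha \in H^{m-n}(X)$ with $\alpha \cup f^*\mu_Y = \mu_X$ (equivalently $f_!\alpha = 1_Y$). Proposition \ref{Prop-4.4} with $\beta = \mu_Y$ gives $c_+(\cF) \leq c(f^*\mu_Y, f^{-1}\cF)$, and the module-structure inequality applied to $v = f^*\mu_Y$ yields $c(\mu_X, f^{-1}\cF) = c(\alpha\cup f^*\mu_Y, f^{-1}\cF) \geq c(f^*\mu_Y, f^{-1}\cF) \geq c_+(\cF)$, so $c_+(f^{-1}\cF) \geq c_+(\cF)$; combined with (i) this forces equality on $c_+$. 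The matching equality on $c_-$ follows from the second inequality of Proposition \ref{Prop-4.4} applied with our $\alpha$, namely $c(\alpha, f^!\cF) \leq c(f_!\alpha, \cF) = c_-(\cF)$: identifying $f^!\cF$ with $f^{-1}\cF$ up to the shift $\omega_{X/Y}$ (legitimate under the non-characteristic hypothesis via singular-support uniqueness, Theorem \ref{Thm-Quantization}(\ref{TQ4})), and using the module-structure lower bound $c(\alpha, f^{-1}\cF) \geq c(1_X, f^{-1}\cF) = c_-(f^{-1}\cF)$, one chains to $c_-(f^{-1}\cF) \leq c_-(\cF)$, whence with (i) we get equality. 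Hence $\gamma(f^{-1}\cF) = \gamma(\cF)$. The Lagrangian statements are then obtained by specializing to $\cF = \cF_L$ and invoking $SS(f^{-1}\cF_L) = \Lambda_f^{-1}L$ under non-characteristic transversality, as in Proposition \ref{Prop-4.4}.

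The main technical obstacle is reconciling the functors $f^{-1}$ and $f^!$. They agree on singular support in the non-characteristic setting, and in the Lagrangian case both quantize the same Lagrangian $\Lambda_f^{-1}L$, but in general they differ by a shift given by the relative dualizing complex $\omega_{X/Y}$. Keeping careful track of how this shift interacts with the canonical identification $H^*(N\times\mathbb R, \cG) \cong H^*(N)$, and in particular with the fundamental and unit classes used to define $c_\pm$, is the delicate point of the argument.
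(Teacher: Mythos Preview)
Your argument for the $c_+$ bound in part~(i) has a genuine gap. Applying the second inequality of Proposition~\ref{Prop-4.4} to $\check{\cF}$ with $\alpha=1_X$ yields $c(1_X,f^!\check{\cF})\leq c(f_!(1_X),\check{\cF})$. Even granting your identification $f^!\check{\cF}\simeq\check{f^{-1}\cF}$, the left side is $c_-(\check{f^{-1}\cF})=-c_+(f^{-1}\cF)$, so you obtain $-c_+(f^{-1}\cF)\leq c(f_!(1_X),\check{\cF})$. To conclude $c_+(f^{-1}\cF)\leq c_+(\cF)$ you would need the \emph{opposite} inequality $-c_+(f^{-1}\cF)\geq c(1_Y,\check{\cF})=-c_+(\cF)$, and there is no way to chain to it: the class $f_!(1_X)\in H^{n-m}(Y)$ vanishes whenever $\dim X>\dim Y$, and when it is nonzero the module-structure inequality only gives $c(f_!(1_X),\check{\cF})\geq c(1_Y,\check{\cF})$, which combined with what you have yields nothing. (Separately, the identification $f^!\check{\cF}\simeq\check{f^{-1}\cF}$ is itself not right: applying $f^!$ to the cone defining $\check{\cF}$ turns the $k_Y$ term into $\omega_{X/Y}$, not $k_X$.)

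The paper avoids this by never switching to $f^!$. It applies the \emph{first} inequality of Proposition~\ref{Prop-4.4} to $\check{\cF}$ rather than to $\cF$, obtaining $c(1_Y,\check{\cF})\leq c(1_X,f^{-1}\check{\cF})$, which via Proposition~\ref{Prop-3.6}(\ref{Prop-3.6-ii}) becomes $c_+(\cF)\geq c_+(f^{-1}\cF)$ once one identifies $f^{-1}\check{\cF}$ with $\check{f^{-1}\cF}$; in the Lagrangian case this is immediate from Theorem~\ref{Thm-Quantization}(\ref{TQ4}), since both sides quantize $-\Lambda_f^{-1}L$. Your part~(ii) argument for $c_+$ coincides with the paper's; for $c_-$ in~(ii) the paper again invokes this same duality (replace $\cF$ by $\check{\cF}$ in the $c_+$ argument) rather than routing through $f^!$, so the $f^{-1}$-versus-$f^!$ bookkeeping you flag as the main obstacle never actually arises.
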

\begin{proof} 
For (\ref{Cor-5.5-i})
 the first inequality in Proposition \ref{Prop-4.4} we get $c(1_Y,\cF)\leq c(1_X, f^{-1}\cF)$ hence by duality $c(\mu_Y,\cF)\geq c(\mu_X, f^{-1}\cF)$ and
we get $\gamma (f^{-1}\cF)\leq \gamma(\cF)$. 

For (\ref{Cor-5.5-ii}), we must prove that  the assumption $f^{*}(\mu_Y)\neq 0$ implies$$\gamma (f^{-1}\cF)\geq \gamma(\cF)$$
Indeed, by Proposition \ref{Prop-4.4}, $c(\mu_Y\cF) \leq c(f^*(\mu_Y), f^{-1}\cF)$, but by the triangle inequality (applied to $\cF_2=\cF_3$), since $f^*(\mu_Y)\neq 0$,  by Poincar\'e duality there is a class $\alpha$ such that $f^*(\mu_Y)\cup \alpha =\mu_X$. Then  
$$ c(\mu_Y\cF) \leq c(f^*(\mu_Y), f^{-1}\cF)\leq c(\mu_X, f^{-1}\cF)$$ hence by duality
$$c(1_Y, \cF) \geq c(\mu_X, f^{-1}\cF)$$ and finally $$\gamma (f^{-1}\cF) \geq \gamma(\cF)$$
This concludes our proof  of (\ref{Cor-5.5-ii}).
\end{proof}

\begin{rem} 
When we have a \GFQI , $S: Y \times {\mathbb R}^k$ for $L$ then $\Lambda_f^{-1}\circ L$ has the \GFQI  $S_f$ defined by $S_f(x,\xi)=S(f(x);\xi)$. Then denoting $\tilde f(x,\xi)=(f(x),\xi)$, we have $S_f=S\circ \tilde f$ and $S_f^t=\{(x,\xi) \mid S_f(x,\xi)\leq t\}$ is the preimage by $\tilde f$ of $S^t$. Setting $E_Y=Y\times {\mathbb R}^k, E_X=X \times {\mathbb R}^k$ we get
by duality (\cite{Spanier}, p.296, theorem 17)
$$H_q(S^t,S^{-\infty}) \simeq H^{n-q}(E_Y\setminus S^{-\infty}, E_Y\setminus S^t) =H^{n-q}(E_Y, (-S)^{-t})$$
and similarly 
$$H_q(S_f^t,S_f^{-\infty}) \simeq H^{m-q}(E_Y\setminus S_f^{-\infty}, E_Y\setminus S_f^t) =H^{m-q}(E_Y, (-S_f)^{-t})$$
where $m=\dim(X)+k, n=\dim(Y)+k$. 
Thus the map $$\tilde f_*: H_{q}(E_Y, (-S)^{-t}) \longrightarrow H_{q}(E_Y, (-S_f)^{-t}) $$ induces a map  
$$\tilde f_{!}: H^{m-q}(S^t,S^{-\infty}) \longrightarrow H^{n-q}(S_f^t,S_f^{-\infty})$$ corresponding to $f_! : H^*(X) \longrightarrow H^{*+n-m}(Y)$. The inequality
the follows by the same argument as in the case of sheaves. 
\end{rem} 

Our goal is now to prove an estimate for the reduction going in the opposite direction compared to Proposition \ref{Prop-reduction-inequality}. 
\begin{thm}[Inverse reduction inequality] \label{Thm-Reverse-reduction-inequality}
Let $\widetilde L_1, \widetilde L_2$ be two Lagrangians in $\mathcal L(T^*(X\times Y))$  and $(\widetilde L_1)_x, (\widetilde L_2)_x$ be their  reductions at $x\in X$ assumed to be {\bf embedded}. Let $d=\dim(X)$. 
Then  we have $$c_+(\widetilde L_1,\widetilde L_2) \leq  \sup_{x\in X} c_+((\widetilde L_1)_x, (\widetilde L_2)_x)+ d \sup_{x\in X}\gamma ((\widetilde L_1)_x, (\widetilde L_2)_x)$$ and
$$c_-(\widetilde L_1,\widetilde L_2) \geq  \inf_{x\in X} c_-((\widetilde L_1)_x, (\widetilde L_2)_x)- d\sup_{x\in X} \gamma ((\widetilde L_1)_x, (\widetilde L_2)_x) $$
\end{thm}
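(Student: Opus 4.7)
The plan is to reduce the theorem to a sheaf-theoretic estimate and prove it by iterated Mayer--Vietoris on a good cover of $X$.

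By Proposition \ref{Prop-3.6}(\ref{Prop-3.6-ii}) (Verdier duality), the two inequalities in the theorem are exchanged via $\cF\leftrightarrow\check{\cF}$ while $\gamma$ is preserved, so it suffices to prove the inequality for $c_-$. Using Theorem \ref{Thm-Quantization}, I set $\cF = R\hHom^\cstar(\cF_{L_1},\cF_{L_2})\in\mathcal T_0(X\times Y)$, whose restriction $\cF_x = \cF|_{\{x\}\times Y\times \mathbb R}$ represents the reduced pair $((\widetilde L_1)_x,(\widetilde L_2)_x)$ in the pseudo-Tamarkin category. The theorem then reduces to the purely sheaf-theoretic statement
\[
c_-(\cF) \geq \inf_{x\in X} c_-(\cF_x) - d\sup_{x\in X}\gamma(\cF_x)
\]
for $\cF\in\mathcal T_0(X\times Y)$.

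I then choose an open cover $\mathcal U = \{U_0,\ldots,U_d\}$ of $X$ where each $U_i$ is a disjoint union of open sets contractible in $X$---obtained, for example, by thickening the open stars of barycenters of $i$-simplices in the barycentric subdivision of a triangulation---so the nerve of $\mathcal U$ has combinatorial dimension at most $d$. The proof proceeds by induction on the number of sets glued: starting from $U_0$ and adjoining one $U_i$ at a time, after $d$ gluing steps we cover all of $X$. The base case uses that on a contractible open $U\subset X$ with basepoint $x_0$, the restriction $\cF|_{U\times Y}$ is, via contractibility, controlled by the fiber $\cF_{x_0}$; combined with Proposition \ref{Prop-4.2}, this gives a lower bound $c_-(\cF|_{U\times Y}) \geq \inf_{x\in U} c_-(\cF_x)$ on the base piece.

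The core technical ingredient is a persistence-type Mayer--Vietoris estimate: if $X_k = U_0\cup\cdots\cup U_k$ and $Z = X_k\cap U_{k+1}$, then the compatibility on $Z\times Y$ of local lifts of $1$ on $X_k\times Y$ and $U_{k+1}\times Y$ at a given level $t$ is obstructed by a class in the kernel of the restriction $H^*(Z\times Y\times[t,+\infty[,\cF)\to H^*(Z\times Y\times\mathbb R,\cF)$, and this obstruction is resolved by shifting down by at most $\sup_{x\in Z}\gamma(\cF_x)\leq \sup_x\gamma(\cF_x)$ using the barcode/persistence structure of constructible sheaves in $\mathcal T_0$ (a kernel class is the image, under the comparison map, of a class living from a level shifted down by at most the $\gamma$ of the restricted sheaf). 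Iterating through the $d$ gluing steps accumulates the advertised total cost $d\sup_x\gamma(\cF_x)$.

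The main obstacle is the persistence lemma controlling each Mayer--Vietoris step: one must show that any class in the kernel of $H^*(Z\times Y\times[t,+\infty[,\cF)\to H^*(Z\times Y\times\mathbb R,\cF)$ lifts from level $t-\sup_x\gamma(\cF_x)$, passing from the pointwise fiber $\gamma$-bound at each $x\in Z$ to a $\gamma$-bound on the restricted sheaf $\cF|_{Z\times Y}$ as an object in the pseudo-Tamarkin category. This rests on applying the statement inductively to $Z$ (which has lower combinatorial complexity than $X$), together with the duality and $\cstar$-product properties of $\mathcal T_0$ used to track the accumulated level shifts through the $d$ successive gluings.
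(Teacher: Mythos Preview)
Your overall architecture---cover $X$ by $d+1$ open sets and glue via Mayer--Vietoris, paying a persistence penalty at each step---is indeed the paper's strategy. However, two key ingredients are missing, and the substitutes you propose do not work.

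\textbf{The gluing step.} You correctly identify that the obstruction to patching local lifts of $1$ over $X_k$ and $U_{k+1}$ is a class in the kernel of $H^*(Z\times Y\times[t,+\infty[,\cF)\to H^*(Z\times Y\times\mathbb R,\cF)$. But a class in such a kernel dies after shifting by the \emph{boundary depth} of the corresponding persistence module, not by its $\gamma$. The inequality $\beta\leq\gamma$ is nontrivial: it is the Kislev--Shelukhin inequality (Proposition~\ref{Ki-Sh} in the Appendix), whose proof requires the Floer-theoretic ring structure on $FH^*((L_1)_x,(L_2)_x)$ with a genuine unit---and this is \emph{precisely} where the embeddedness hypothesis on the reductions enters. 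Your proposal instead appeals to ``applying the statement inductively to $Z$'', but the theorem being proved bounds $c_\pm$, not the boundary depth; an inductive application to $Z$ yields nothing about how far one must shift to kill a kernel class on $Z\times Y$. Since you never invoke embeddedness, and the inequality is false without it (see Remark~(2) after Corollary~\ref{Cor-A4}), this step cannot go through as written.

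\textbf{The base case and the size of the cover.} You take each $U_i$ to be a disjoint union of contractible opens and claim that contractibility of $U$ gives $c_-(\cF|_{U\times Y})\geq \inf_{x\in U} c_-(\cF_x)$. But contractibility of $U$ says nothing about how $\cF$ varies over $U$; the sheaf need not be ``constant'' along a contractible base. Proposition~\ref{Prop-4.2} gives only the opposite inequality $c_-(\cF|_{U\times Y})\leq c_-(\cF_x)$. The paper handles this by taking a \emph{fine} triangulation so that each piece $V_{j,k}$ is small enough that, by continuity, $c_+(\mathcal V_{j,k};\cF)\leq \sup_x c(x;\cF)+\eta$ for any prescribed $\eta>0$. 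Smallness, not contractibility, is what makes both the base case and the fiberwise control of the gluing cost work.

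In short: the missing idea is the Kislev--Shelukhin bound $\beta((\widetilde L_1)_x,(\widetilde L_2)_x)\leq\gamma((\widetilde L_1)_x,(\widetilde L_2)_x)$, applied after refining the cover so that the obstructions are essentially fiberwise.
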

  
  We first set 
  \begin{defn}[see \cite{Viterbo-stochastic}, section 5] Let $\cF$ in $\mathcal T_0(X)$. Let  $\Omega$ be a bounded connected  open set in $X$. We denote by $H_{fc}^*(\Omega\times I, \cF)$ where $I\subset {\mathbb R} $ is an interval, the cohomology with support having compact projection in $\Omega$. In other words we consider sections intersecting  $\Omega \times [a,b]$ for any $a,b \in {\mathbb R}$ in a compact set. 
  
   Let $\mu_\Omega$  (resp. $1_\Omega$) be the generator of $$H_{fc}^d(\Omega \times {\mathbb R}, \cF)\simeq H^d(\Omega\times {\mathbb R}  , \partial \Omega \times {\mathbb R} ; 
\cF)\simeq H_{c}^d(\Omega)\simeq H^d(\Omega, \partial \Omega)$$
(resp. 
$H^0(\Omega \times {\mathbb R}, \cF)\simeq H^0(\Omega)$
).
 We set 
$$ c_+(\Omega , \cF)= \inf \left \{ t \mid \mu_\Omega \in \Image (H_{fc}^*(\Omega\times [t, +\infty[,\cF) \right \}$$
$$ c_-(\Omega , \cF)= \inf \left \{ t \mid 1_\Omega \in \Image (H^*(\Omega\times [t, +\infty[,\cF) \right \}$$
Moreover for $x \in N$ we set $$c(x\times [a,b[,\cF)=\lim_{\Omega \ni x} c_+(\Omega \times [a,b[, \cF)=\lim_{\Omega \ni x} c_-(\Omega\times [a,b[, \cF)$$
\end{defn} 
\begin{rem} 
The last equality holds because $\lim_{\Omega \ni x}H^*(\Omega \times {\mathbb R} , \cF)= H^*( {\mathbb R} , \cF_x)$ where $\cF_x$ is the limit of $\cF_{U\times {\mathbb R} }$ for $x \in U$. But $H^*(\{x\}\times  {\mathbb R} , \cF_x)= H^*( \{x\}, k_X)$ is one dimensional. 
\end{rem} 
  \begin{proof}[Proof of Theorem \ref{Thm-Reverse-reduction-inequality}] Note that it is enough to prove the first inequality, the second follows by using $c_-(\widetilde L_1,\widetilde L_2)=-c_+(\widetilde L_2,\widetilde L_1)$. 
  We shall use the inequality from Lemma \ref{Ki-Sh} in Appendix \ref{Appendix1} (this is essentially  in proposition 36 from  \cite{Ki-Sh}), and apply it to $FH_*(\widetilde L_1,\widetilde L_2)$. This  yields $$\beta ((\widetilde L_1)_x, (\widetilde L_2)_x)) \leq \gamma ((\widetilde L_1)_x, (\widetilde L_2)_x))$$ where $\beta (L_1,L_2)$ is the boundary depth of  $FH_*(L_1,L_2)$ (see Appendix \ref{Appendix1} for the definition). Note that this is the step where we need the reductions to be embedded. We shall only use this inequality and the fact that the boundary depth $\beta$ for filtered Floer homology (or for any persistence module) satisfies the following: 
  
  if $x\in FH^*_t(L_1,L_2)$ has image $0$ in $FH^*(L_1,L_2)$ then it has already image $0$ in $FH^*_{t+\beta}(L_1,L_2)$.  

We then set $h^t_U=H_{fc}^*(U \times ]-\infty, t[; \cF)$  and   $h_{\Omega}^{+\infty}= H_{fc}^*(\Omega)$ for any connected open set $\Omega$,  so that we have the following diagram representing the  Mayer-Vietoris exact sequence :
$$\xymatrix {
h_{U\cap V}^t \ar[rr] && h_U^t\oplus h_V^t \ar[dl] \\ &h_{U\cup V}^t \ar[ul]^{+1}&\\
}
 $$
  We denote by $c(U, \cF)$ the smallest $t$ such that the map 
  $$H_{fc}^*(U\times [t, +\infty [, \cF) \longrightarrow H_{fc}^*(U\times ]-\infty, +\infty[, \cF)$$ has $\mu_U$, the generator of $H_{fc}^n(U)$ in its image. 
  We then have the diagram
  
\center{\resizebox{8cm}{!} {\xymatrix {
  &&h^\infty_U\ar[ddrr]^{j_U^\infty}&& \\
  &&h^t_U\ar[u]\ar[dr]^{j_U^t}&&\\
  h_{U\cap V}^\infty \ar[ddrr]_{i_{V}^\infty}\ar[uurr]^{i_{U}^\infty} &\ar[l]  h_{U\cap V}^t \ar[ur]^{i_U^t}\ar[dr]_{i_V^t} & &h^t_{U\cup V} \ar[r]& h^\infty_{U\cup V}\\
&&h_V^t\ar[ur]_{j_V^t}\ar[d]&&\\
&&h_V^\infty\ar[uurr]_{j_V^\infty}&&}
}
}

  Now let us consider elements $a \in h_U^t$ and $b \in h_V^t$ having images in $h_U^\infty, h_V^\infty$ given by  $\mu_U, \mu_V$. Then the images of $a,b$ in $h_{U\cup V}^t$ are not necessarily equal, but since the images of $\mu_U\in h_U^\infty$ and $\mu_V\in  h_V^\infty$ coincide in $h_{U\cup V}^\infty$ - and are equal to $\mu_{U\cup V}$ -  the image of $a$ and $b$ in $h_{U\cup V}^{t+\beta}$ do coincide, and we get a class $c$ in $h_{U\cup V}^{t+\beta}$ having image $\mu_{U\cup V}$ in $h_{U\cup V}^\infty$. 
    \begin{figure}[ht]
\center\begin{overpic}[width=6cm]{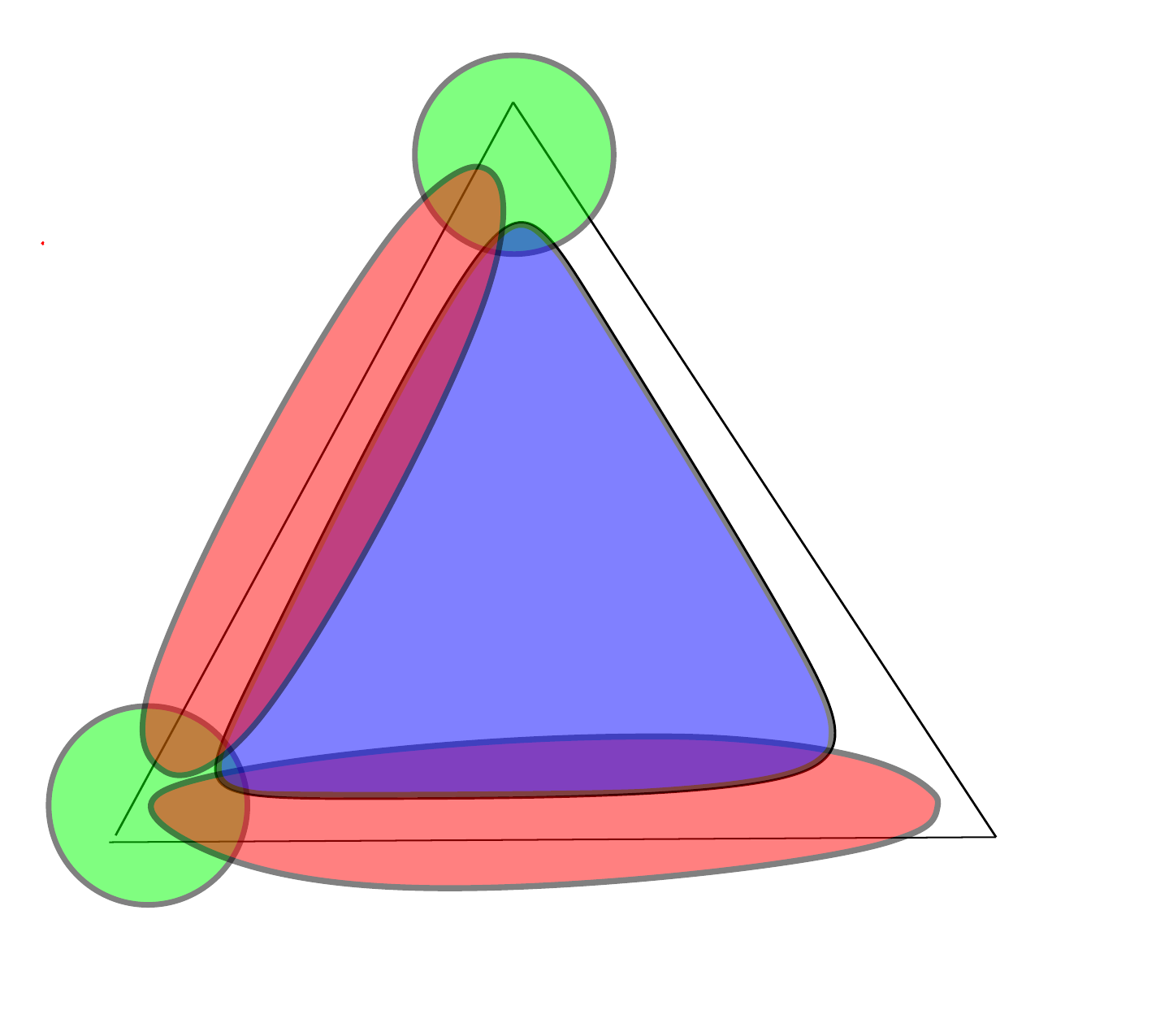}
 \end{overpic}
\caption{The Triangulation}
\label{fig-2}
\end{figure}
  Now consider a covering of $X$ by $d+1$ open sets $U_0, ..., U_d$  such that each $U_j$ is the disjoint union of small open sets $V_{j,k}$ for $1\leq k \leq m_j$, obained from a triangulation as on Figure \ref{fig-2}. We denote by $\mathcal U_j=U_j\times Y, \mathcal V_{j,k}=V_{j,k}\times Y$.  If the triangulation is fine enough, the $V_{j,k}$ are small enough, so that   $h_{\mathcal V_{j,k}}^t \longrightarrow h_{\mathcal V_{j,k}}^\infty$ contains $\mu_{\mathcal V_{j,k}}$ as soon as $t\geq c(x; \cF)+ \eta$ where $c(x; \cF)$ is the smallest $t$ such that $h_x^t \longrightarrow h_x^\infty$ contains $1_x$ and $\eta>0$. Note that indeed $\lim_{\Omega \ni x\times {\mathbb R} }c_\pm(\Omega; \cF)=c(x; \cF)$. Now clearly $c_+(\mathcal U_j; \cF) =\sup_{1\leq k \leq m_j} c_+(\mathcal V_{j,k}; \cF)$ hence  $$c_+(\mathcal U_j; \cF) \leq \sup_{x\in N} c(x ; \cF)+ \eta$$ 
  We just proved that if $\beta$ is an upper bound for the $ \beta (x; \cF)$ we have $$c_+(\mathcal U_0\cup ... \cup \mathcal U_{j}\cup \mathcal U_{j+1}; \cF) \leq \max\left\{c_+(\mathcal U_0\cup ... \cup \mathcal U_{j}), c_+(\mathcal U_{j+1}; \cF)\right \}+\eta + \beta$$
  It follows by induction that
  $$c_+(\mathcal U_0\cup ... \cup \mathcal U_{j}\cup \mathcal U_{d}; \cF) \leq c_+(\mathcal U_0; \cF) + d \beta = d \eta+ d\beta \leq d\eta + (d+1) \sup_{x\in X}\gamma (x; \cF)$$
  Since this holds for any positive $\eta$, this concludes the proof. 
  \end{proof} 
  
  Note the following Lemma obtained at the beginning of our proof. 
  \begin{prop} 
  Let $U, V$ be open sets with boundary, and set $\gamma(U,V;\cF)=\max \left \{ \gamma(U,\cF), \gamma(V;\cF)\right\}$,  $\beta(U,V;\cF)=\max \left \{ \beta(U,\cF), \beta(V;\cF)\right\}$. Then 
 \begin{gather*}  c_+(U\cup V; \cF) \leq \max \left \{ c_+(U,\cF), c_+(V;\cF)\right \}+ \beta(U,V;\cF) \leq \\ \max \left \{ c_+(U,\cF), c_+(V;\cF)\right \}+ \gamma(U,V;\cF)
 \end{gather*} 
  \end{prop}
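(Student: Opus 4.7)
The plan is to directly axiomatize the Mayer-Vietoris step that drives the inductive argument in the proof of Theorem \ref{Thm-Reverse-reduction-inequality}. The second inequality is immediate from the general persistence-module bound $\beta \leq \gamma$ supplied by Lemma \ref{Ki-Sh} in Appendix \ref{Appendix1} (applied to the relevant barcode), so essentially all the work lies in the first inequality.

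For the first inequality I fix $\varepsilon > 0$ and set $t = \max\{c_+(U;\cF), c_+(V;\cF)\} + \varepsilon$. By the definition of $c_+$ there exist classes $a \in h^t_U$ and $b \in h^t_V$ whose images in $h^\infty_U$ and $h^\infty_V$ are $\mu_U$ and $\mu_V$ respectively. I would then push both classes into $h^t_{U\cup V}$ along the natural maps provided by the compactly-supported Mayer-Vietoris sequence drawn in the proof of Theorem \ref{Thm-Reverse-reduction-inequality}. Under these maps $\mu_U$ and $\mu_V$ both become $\mu_{U\cup V}$ at level $+\infty$ (with compatible choices of orientation/sign), so the difference of the images of $a$ and $b$ in $h^t_{U\cup V}$ lies in the kernel of $h^t_{U\cup V}\to h^\infty_{U\cup V}$.

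Now apply the defining property of the boundary depth recalled in the proof of Theorem \ref{Thm-Reverse-reduction-inequality}: any class that dies in $h^\infty_{U\cup V}$ is already dead at level $t + \beta$. Using Mayer-Vietoris, the relevant $\beta$ can be bounded by $\beta(U,V;\cF) = \max\{\beta(U;\cF), \beta(V;\cF)\}$, since a class killed on passage to $+\infty$ comes, through the connecting map, from a class in $h^\bullet_{U\cap V}$ or equivalently from obstructions already visible at the level of $U$ and $V$ separately. Hence the images of $a$ and $b$ in $h^{t+\beta(U,V;\cF)}_{U\cup V}$ coincide, yielding a class $c \in h^{t+\beta(U,V;\cF)}_{U\cup V}$ whose image at $+\infty$ is $\mu_{U\cup V}$. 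This gives $c_+(U\cup V;\cF) \leq t + \beta(U,V;\cF)$, and letting $\varepsilon\to 0$ proves the first inequality.

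The main obstacle is the bookkeeping for the boundary depth: one has to check that the kernel of $h^t_{U\cup V} \to h^\infty_{U\cup V}$ into which $\bar a - \bar b$ lands is controlled by the boundary depths of $U$ and $V$ individually, rather than by some a priori larger quantity associated to $U\cup V$. The cleanest way I would argue this is to lift the obstruction to $h^t_U \oplus h^t_V$ via the Mayer-Vietoris differential and apply the boundary depth bound on each summand separately, exactly matching the inductive step in Theorem \ref{Thm-Reverse-reduction-inequality}; this makes the $\max$ on the right-hand side natural and avoids introducing any $\beta$ defined intrinsically on $U\cup V$.
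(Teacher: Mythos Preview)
Your argument is exactly the one the paper intends: the Proposition carries no separate proof in the paper, it is simply declared to be ``obtained at the beginning'' of the proof of Theorem \ref{Thm-Reverse-reduction-inequality}, namely the Mayer--Vietoris step you reproduce (lift $\mu_U,\mu_V$ to $a,b$ at level $t$, push to $h^t_{U\cup V}$, observe the difference dies at $\infty$, invoke the boundary depth to kill it at $t+\beta$, then let $\varepsilon\to 0$; the second inequality is the Kislev--Shelukhin bound).

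On the point you flag as the main obstacle --- why $\max\{\beta(U;\cF),\beta(V;\cF)\}$ rather than $\beta(U\cup V;\cF)$ is the relevant bound --- the paper is no more explicit than you are; in the actual application the $\beta$ used is a uniform pointwise bound $\sup_x \beta(x;\cF)$ that dominates all of these, so the distinction never bites there. Your proposed resolution (lift the obstruction to $h^t_U\oplus h^t_V$ and use the boundary depth on each summand) does not work as stated, since the lift $(a,-b)$ maps to $(\mu_U,-\mu_V)\neq 0$ at level $\infty$ and hence is not itself governed by a boundary-depth bound; but as the paper offers no finer argument either, your write-up matches the paper's level of detail.
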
 
    \begin{rems}\label{Rems-5.12}
    \leavevmode
\begin{enumerate} 
\item  In the framework of \GFQI, the proof becomes slightly more intuitive.   Let us first explain the case $X=S^1$. Set  $S_x(y,\xi)=S(x,y;\xi)$ where $S$ is a \GFQI for $L$, then there is a cycle $C_x \subset S_x^{c_x}$ representing the generator of $H_*(S_x^\infty, S_x^{-\infty})$ for $c_x\leq c_+(S_x)+ \varepsilon $. We may divide $S^1= [0,1]/\simeq $ in $N$ subintervals $[ \frac{k-1}{N}, \frac{k}{N}]$ so that $C_{ \frac{k-1}{N}}$ is contained in    $S_x^{c_{k-1}}$ for all $x\in [ \frac{k-1}{N}, \frac{k}{N}]$. Of course at the boundary points $ \frac{k}{N}$ , we have two cycles:  the one coming from $[ \frac{k-1}{N}, \frac{k}{N}]$ that we denote by $C_k^-$ and the one coming from $[ \frac{k}{N}, \frac{k+1}{N}]$ denoted by $C_k^+$. Now $[C_k^-]=[C_k^+]$ in $H_*(S_{ \frac{k}{N}}^\infty, S_{ \frac{k}{N}}^{-\infty})$ but this implies by definition of the boundary depth that $C_k^--C_k^+ =\partial \Gamma_k$ with $\Gamma_k \in S^{c_k+ \beta (S_{\frac{k}{N}})}$. Then gluing toghether the $[ \frac{k-1}{N}, \frac{k}{N}] \times C_k$ and the $\Gamma_k$ for $k$ in $[0,N]$, we get a cycle $\widetilde C$ that represents the generator of $H_*(S^\infty, S^{-\infty})$ and contained in $S^c$ for $c \leq \sup_x c_+(S_x)+\sup_x \beta (S_x) \leq 2 \sup_x(S_x)=2 \sup_x c_+(L_x)$.

  As a result $c_+(L) \leq 2 \sup_x c_+(L_x)$ hence $\gamma (L) \leq \sup_x c_+(L_x)$. 
  The higher dimensional case is analogous (see \cite{SHT}, lemma 6.1) by triangulating $X$. We start by taking constant cycles at the interior of a triangle (on the picture on the blue region), then in the red region we have and then glue at the common side with a chain $\Gamma$ such that $\partial \Gamma = C_+ \cup -C_-$ and $\Gamma \subset S^{2c}$ where $c=\gamma(S_x) \geq  \beta (S_x)+ \varepsilon $. At a vertex $x_0$, we get $C_1,..., C_k$ and we just constructed $\Gamma_{i,j}$ such that $\partial \Gamma_{i,j}=C_i - C_j$. Thus summing over all $i,j$  we get that $\partial (\sum_{i<,j} \Gamma_{i,j} )=\sum_{i<j} C_i- C_j=0$. So $\sum_{i<,j} \Gamma_{i,j}$ is a cycle
  and is a cocycle in $H_*(S^\infty_{x_0}, S^{-\infty}_{x_0})$ so can be written as $\partial T_{x_0}$  in $S_{x_0}^{3c}$. We can then glue the $C_i$, the $\Gamma_{i,j}$ and $T_{x_0}$ to a cycle in $S_{x_0}^{3c}$.  

\item As pointed out by B. Chantraine, we do not really need that $(L_1)_x, (L_2)_x$ be embedded. It is enough that $FH^*((L_1)_x, (L_2)_x)$ has a unit satisfying the Conditions (\ref{Cond-2}), (\ref{Cond-4}) of Appendix \ref{Appendix1}.  
\item We do not necessarily have 
$$\gamma (L_1,L_2) \leq C_d \sup_x \gamma ((L_1)_x, (L_2)_x))$$
Indeed, if $L_2=0_{X\times Y}$ and $L_1= \gr (df)$ for $f \in C^\infty(X\times Y, {\mathbb R})$ the inequality we proved  translates to 
$$\sup_{X\times Y} f\leq C_d \sup_{x\in X} \sup_Y(f_x)$$ which obviously holds, while the above inequality would translate to 
$$ \osc_{X\times Y} (f) \leq C_d \sup_{x\in X} \osc_Y(f_x)$$
which does not hold in general : if $f$ is constant on $Y$ the RHS vanishes, while the LHS can well be non-zero. 
However we can often recover  an inequality on the spectral norms as above for special cases of $L_1, L_2$. 
 
Note that it is always more convenient to deal with the sheaves (or the  \GFQI) than with the Lagrangians. 
\item \label{Rems-5.12-d}
Note however that if we know that $c_-((\widetilde L_1)_x,(\widetilde L_2)_x)\leq 0 \leq c_+((\widetilde L_1)_x,(\widetilde L_2)_x)$ then we get 
$$\gamma (L_1,L_2) \leq C_d \sup_x \gamma ((L_1)_x, (L_2)_x))$$ since then $c_+((\widetilde L_1)_x,(\widetilde L_2)_x) \leq \gamma((L_1)_x,(L_2)_x) $ and $-c_-((\widetilde L_1)_x,(\widetilde L_2)_x) \leq \gamma((L_1)_x,(L_2)_x)$. 

\end{enumerate} 
\end{rems} 

\section{A remarkable property of the spectral distance}\label{Section-6}
In this section $N$ will be a closed $n$-dimensional manifold. 
\begin{defn} \label{Def-2.1} We say that $N$ satisfies Condition ($\star$) if
there exists  a closed manifold $V$ and a map $\Phi: V \longrightarrow \Diff_0(N)$ such that the map $\Phi_{x_0}=ev_{x_0} \circ \Phi: V \longrightarrow N$ satisfies $\Phi_{x_0}^*(\mu_N)\neq 0$ in $H^*(V)$. 
\end{defn} 

Until the end of  this section and the next one we assume $N$ satisfies Condition ($\star$). Note that any Lie group $G$, obviously satisfies condition ($\star$), by taking $V=G$ and $\Phi(g)(x)=g\cdot x$, so that clearly $\Phi_{e}=\Id_G$. 
We first need the following folk construction, going back at least to Arnold (\cite{Arnold-cobordism}).
\begin{lem}\label{Lemma-7.5} %
Let $\widetilde L\in \mathcal L (T^*N)$ and $H\in C^\infty(X\times N\times [0,1], {\mathbb R} )$ so that for each $x\in X$ we can consider the Hamiltonian $H_x(t,z)$ yielding a flow $\varphi_x^t$ on $T^*N$. Then there is a well defined $\widetilde\Lambda\in \mathcal L(T^*(X\times N))$ such that the reduction of $\widetilde \Lambda \in \mathcal L (T^*(X\times N))$ by $T_x^*X\times T^*N$ is   $\varphi_x^1(\widetilde L)$.  
\end{lem}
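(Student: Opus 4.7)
The construction is the classical suspension of a parametric Hamiltonian family, going back to Arnold. The plan is to promote $H$ to a single Hamiltonian $\widetilde H$ on $T^*(X \times N)$ whose flow preserves the $X$-base point and realizes $\varphi_x^t$ on each $x$-slice, then take $\widetilde \Lambda$ to be the time-one image of $0_X \times \widetilde L$ under this flow.

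Concretely, define
$$\widetilde H(t, x, \xi, q, p) := H(t, x, q, p),$$
viewed as a smooth function on $[0,1] \times T^*(X \times N)$, independent of the cotangent variable $\xi \in T_x^*X$. Hamilton's equations give $\dot x = \partial \widetilde H/\partial \xi = 0$, so the flow $\widetilde \varphi^t$ of $\widetilde H$ preserves each $x$; and on each slice $\{x\} \times T_x^*X \times T^*N$ the remaining equations reduce to those of $H_x$, so $\widetilde \varphi^t(x, \xi_0, q, p) = (x, \xi(t), \varphi_x^t(q,p))$ for some $\xi(t)$ determined by $-\partial \widetilde H/\partial x$. Equip $0_X \times L$ with the product primitive $f(x,0,q,p) = f_L(q,p)$ and the product grading (canonical on $0_X$, $\widetilde G_L$ on $L$), so that it defines an element of $\mathcal L(T^*(X \times N))$, and set
$$\widetilde \Lambda := \widetilde \varphi^1_{\widetilde H}(0_X \times \widetilde L).$$

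The reduction of $\widetilde \Lambda$ at $x$ is the projection to $T^*N$ of its intersection with the coisotropic $T_x^*X \times T^*N$. Since $\widetilde \varphi$ preserves the $X$-coordinate, this intersection corresponds bijectively to the set of points of $0_X \times L$ whose first coordinate is $x$, and its projection to $T^*N$ is precisely $\varphi_x^1(L)$, as required.

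It remains to check that the primitive and grading transported to $\widetilde \Lambda$ restrict correctly to those of $\varphi_x^1(\widetilde L)$. At a point $\widetilde \varphi^1(x, 0, q_0, p_0)$, the primitive formula from Section 3 yields
$$f_{\widetilde \Lambda} = \int_0^1 \bigl(\xi \dot x + p \dot q - \widetilde H\bigr)\, dt + f_L(q_0, p_0),$$
which, because $\dot x \equiv 0$ and $\widetilde H$ agrees with $H_x$ along the trajectory, reduces to $\int_0^1 \bigl(p \dot q - H_x(t,q,p)\bigr)\, dt + f_L(q_0, p_0)$; this is the primitive of $\varphi_x^1(\widetilde L)$. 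The grading analysis is analogous: $d\widetilde \varphi^t$ is block-triangular and preserves the $T0_X$ factor pointwise, so the lift of the Gauss map on the reduction coincides with the transport of $\widetilde G_L$ by $\varphi_x^t$. This verification of lifts is the only mildly technical point; set-theoretically the result is immediate from the $x$-invariance of the suspended flow, so I expect no real obstacle.
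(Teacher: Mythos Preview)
Your proof is correct and follows the same approach as the paper. The paper's proof is a terse two-liner: it first writes down $\Lambda$ explicitly as $\{(x, y(x,q,p), \varphi_x^1(q,p)) \mid y = \partial_x f_x(q,p)\}$ with primitive $F(x,q,p)=f_x(q,p)$, and then notes as an ``alternative'' that $H(t,x,q,p)$, viewed as a Hamiltonian on $T^*(X\times N)$, has flow $\Phi_H^t$ with $\Phi_H^1(0_X\times L)=\Lambda$ --- which is exactly your suspension construction. Your write-up is more detailed on the verification that the primitive and grading restrict correctly under reduction, which the paper omits (delegating to lemma 5.23 of \cite{Viterbo-book}); this extra care is appropriate and the arguments you give are sound.
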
  
\begin{proof} See e.g. lemma 5.23 from \cite{Viterbo-book}. 
Note that by assumption  $\widetilde L_x=\varphi_x^1(\widetilde L)$ has a well defined primitive for $\lambda=pdq$ that we denote by $f_x(q,p)$. 
 Then $\Lambda =\{(x, y(x,q,p), \varphi_x^1(q,p)) \mid y=f_x(q,p)\}$  and $F(x,q,p)=f_x(q,p)$ must be a primitive for $ydx+ pdq= dF(x,q,p)$. 
 Alternatively  $H(t,x,q,p)$ defines a flow $\Phi_H^t$ on $T^*(X\times N)$ and $\Phi_H^1(0_X\times L)=\Lambda$. 
\end{proof} 

We are first going to prove the following remarkable property

\begin{thm} \label{Thm-6.3}
Let $L$ be an exact Lagrangian in $T^*N$ where $N$ satisfies the above Condition ($\star$). 
There exists $\varphi\in V\subset \Diff_0(N)$ such that its lift $\tau_\varphi$ to $T^*N$ satisfies $\gamma (L, \tau_\varphi (L)) > \frac{\gamma(L)}{n+2}$. 
\end{thm}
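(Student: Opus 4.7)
My plan is to apply the inverse reduction inequality (Theorem~\ref{Thm-Reverse-reduction-inequality}) to a pair of Lagrangians in $T^*(V\times N)$, and to combine it with the pullback invariance of the spectral norm (Corollary~\ref{Cor-5.5}) in order to sandwich $\gamma(L)$ between two estimates involving $\sup_{v\in V}\gamma(L,\tau_{\Phi(v)}L)$.

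\emph{Parametrised lift and upper bound.} By Lemma~\ref{Lemma-7.5} applied to a Hamiltonian realising the family $\tau_{\Phi(v)}^t$, I construct $\widetilde\Lambda\in\mathcal L(T^*(V\times N))$ whose reduction at $v\in V$ equals $\tau_{\Phi(v)}(L)$; equivalently, $\widetilde\Lambda=\tau_{\widetilde\Phi}(0_V\times L)$ for the fibered diffeomorphism $\widetilde\Phi(v,y)=(v,\Phi(v)(y))$. I consider the pair $(0_V\times L,\widetilde\Lambda)$ in $T^*(V\times N)$, whose reduction at $v$ is the pair $(L,\tau_{\Phi(v)}L)$. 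Since $\widetilde\Lambda$ is a Hamiltonian image of $0_V\times L$, the primitives can be chosen so that the normalisation $c_-(L,\tau_{\Phi(v)}L)\le 0\le c_+(L,\tau_{\Phi(v)}L)$ holds for every $v$. Theorem~\ref{Thm-Reverse-reduction-inequality} together with Remark~\ref{Rems-5.12}~(\ref{Rems-5.12-d}), applied with $d=\dim V$ and using this normalisation (and, if needed, restricting to a closed $n$-dimensional subfamily on which $\Phi_{x_0}^*(\mu_N)$ stays non-zero), yields
\[
\gamma(0_V\times L,\widetilde\Lambda)\;\le\;(n+2)\,\sup_{v\in V}\gamma\bigl(L,\tau_{\Phi(v)}L\bigr).
\]

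\emph{Lower bound via pullback.} Consider the smooth section $\iota:V\to V\times N$ given by $\iota(v)=(v,\Phi_{x_0}(v))$. Using the identity $\widetilde\Phi^{-1}\!\circ\iota(v)=(v,x_0)$, a direct computation at the level of singular supports yields
\[
\Lambda_\iota^{-1}(0_V\times L)=\Lambda_{\Phi_{x_0}}^{-1}(L),\qquad \Lambda_\iota^{-1}(\widetilde\Lambda)=0_V,
\]
the second equality coming from an explicit cancellation: the $T^*V$-component $q_v$ of a point of $\widetilde\Lambda$ lying above $\iota(v)$ equals $-p_y\circ d\Phi_{x_0}(v)$, which is exactly annihilated by $p_y\circ d\Phi_{x_0}(v)$ under $d\iota(v)$. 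Condition~($\star$) gives $\Phi_{x_0}^*(\mu_N)\neq 0$, so Corollary~\ref{Cor-5.5}~(\ref{Cor-5.5-ii}) produces
\[
\gamma\bigl(\Lambda_{\Phi_{x_0}}^{-1}(L),0_V\bigr)=\gamma(L,0_N)=\gamma(L).
\]
Applying Corollary~\ref{Cor-5.5}~(\ref{Cor-5.5-i}) to $\iota$ and the sheaf $R\hHom^\cstar(\cF_{0_V\times L},\cF_{\widetilde\Lambda})$ then gives
\[
\gamma(L)\;\le\;\gamma(0_V\times L,\widetilde\Lambda).
\]

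\emph{Conclusion.} Combining the two estimates yields $\gamma(L)\le(n+2)\sup_{v\in V}\gamma(L,\tau_{\Phi(v)}L)$; since $V$ is compact the supremum is attained, and the strict inequality $\gamma(L,\tau_\varphi L)>\gamma(L)/(n+2)$ for some $\varphi\in\Phi(V)$ follows from a small $\varepsilon$-refinement in the Mayer--Vietoris argument underlying Theorem~\ref{Thm-Reverse-reduction-inequality}.

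\emph{Main obstacles.} The crux is (i) verifying the vanishing $\Lambda_\iota^{-1}(\widetilde\Lambda)=0_V$: although $\widetilde\Lambda$ is complicated globally, its intersection with the fibre over $\iota(v)$ simplifies drastically because $\widetilde\Phi^{-1}$ sends $\iota(V)$ back to the constant section $\{(v,x_0):v\in V\}$, forcing the cotangent variables to collapse. The second delicate point is (ii) keeping the constant at $n+2$, which requires carefully exploiting the Hamiltonian-isotopy normalisation of $(0_V\times L,\widetilde\Lambda)$ in the Mayer--Vietoris part of the proof of Theorem~\ref{Thm-Reverse-reduction-inequality} so that the dimension-dependent factor is not doubled when passing from $c_+$ and $c_-$ to $\gamma$.
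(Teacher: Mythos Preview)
Your approach is essentially the paper's: upper-bound $\gamma(0_V\times L,\widetilde\Lambda)$ via the inverse reduction inequality, lower-bound it by $\gamma(L)$ via Condition~($\star$) and Corollary~\ref{Cor-5.5}. Your lower-bound section $\iota(v)=(v,\Phi_{x_0}(v))$ is the $\widetilde\Phi$-image of the paper's constant section $j(v)=(v,x_0)$ (indeed $\widetilde\Phi^{-1}\circ\iota=j$), so your identity $\Lambda_\iota^{-1}(\widetilde\Lambda)=0_V$ is precisely the conjugate of the paper's observation that the reduction of the pair $(0_V\times L,\widetilde\Lambda)$ at $x_0\in N$ is $((\cF_L)_{x_0},\Phi_{x_0}^{-1}\cF_L)$ with the first factor having $\gamma=0$.

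One justification is wrong, however. The normalisation $c_-(L,\tau_\varphi L)\le 0\le c_+(L,\tau_\varphi L)$ is \emph{not} a consequence of ``$\widetilde\Lambda$ being a Hamiltonian image of $0_V\times L$'': for a general Hamiltonian isotopy this fails (shift $H$ by a function of $t$ and both $c_\pm$ shift by the same nonzero constant), and the primitives are not free to choose once the Hamiltonian is fixed. The actual reason, which the paper spells out, is that the generating Hamiltonian $\langle p,X_t\rangle$ vanishes on $0_N$, so each $\tau_\varphi$ fixes $\widetilde{0_N}\in\mathcal L(T^*N)$; the triangle inequality with $0_N$ as middle term then gives $c_+(\widetilde L_\varphi,\widetilde L)\ge c_+(\widetilde L_\varphi,0_N)-c_+(\widetilde L,0_N)=0$ and dually $c_-\le 0$. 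This is exactly where the special nature of the isotopy (lift of a base diffeomorphism) enters, and it is not a mere normalisation. A smaller technical point: deducing $\gamma(L)\le\gamma(0_V\times L,\widetilde\Lambda)$ from your two pullback identifications presumes that $\iota^{-1}$ commutes with $R\hHom^\cstar$, which deserves a word (non-characteristicity, or more directly first apply the isometry $\tau_{\widetilde\Phi}^{-1}$ to reduce to the constant section $j$, which is the paper's route).
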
 

\begin{proof} 
Remember that the lift $\tau_\varphi$ of $\varphi$ is given by $(x,p) \mapsto (\varphi(x), p\circ d\varphi(x))$, and if $\varphi$ is the time one of the isotopy generated by the vector field $X(t,x)$, then $\tau_\varphi$ is the Hamiltonian map associated to $H(t,x,p)=\langle p, X(t,x)\rangle$. As a result for $\varphi \in V$ and $\widetilde L\in \mathcal L (T^*N)$ then $\tau_\varphi(\widetilde L)$ is well defined as an element in $\mathcal L (T^*N)$.

Let us denote by $\varphi$ an element in $V$, and set $\varphi(x)=\Phi(\varphi)(x)$. We consider the map $\Psi : V\times N \longrightarrow N$ given by $\Psi(\varphi, x)=\varphi(x)$.%
We set $\widetilde  L_\varphi= \tau_\varphi (\widetilde L)$. We want to find $\varphi$ so that 
$c_+(\widetilde L_\varphi, \widetilde L)$ is bounded from below.
Let $\cF_L$ be the sheaf associated to $\widetilde L$ so that $(\varphi)_*\cF_L$ is associated to $\tau_\varphi(\widetilde L)$, and let us  compute $c_+(\widetilde L_\varphi, \widetilde L)$. Let $\widetilde \Lambda$ be the Lagrangian in $T^*(V\times N)$ given by Lemma \ref{Lemma-7.5}. Its  reduction at $\varphi=\varphi_0$ is given by $\tau_{\varphi_0}( \widetilde L)$.
Now, according to the inverse reduction inequality (Theorem \ref{Thm-Reverse-reduction-inequality}), since $L_\varphi$ (resp.  $L$)  is embedded and coincides with  the reduction of $\widetilde \Lambda$ (resp. $0_V\times L$) at $\{\varphi\}\times N$
$$c_+(\widetilde \Lambda , 0_{V}\times \widetilde L)\leq \sup_{\varphi \in V} c_+(\widetilde L_\varphi , \widetilde L) + n \sup_{\varphi \in V} \gamma ( L_\varphi, L)$$
Note that 
 since  $$c_+( \widetilde L,\tau_\varphi(\widetilde L))= c_+(\tau_{\varphi^{-1}}( \widetilde L),\widetilde L)$$
 we have
$$c_-(\widetilde L_\varphi, \widetilde L)=-c_+( \widetilde L,\widetilde L_\varphi)= -c_+(\widetilde L_{\varphi^{-1}}, \widetilde L)$$
and therefore
$$ \inf_{\varphi \in V} c_-(\widetilde L_\varphi , \widetilde L)= \inf_{\varphi \in V} -c_+( \widetilde L,\widetilde L_\varphi)= -\sup_{\varphi \in V}c_+(\widetilde L_{\varphi^{-1}}, \widetilde L)
$$
As a result $$\sup_{\varphi \in V} c_+(\widetilde L_\varphi , \widetilde L)-\inf_{\varphi \in V} c_-(\widetilde L_\varphi , \widetilde L) \leq \sup_{\varphi \in V} c_+(\widetilde L_\varphi , \widetilde L) +\sup_{\varphi \in V} c_+(\widetilde L_{\varphi^{-1}} , \widetilde L)
$$

Using again the inverse reduction inequality %
\begin{gather*} c_-(\widetilde \Lambda , 0_{V }\times \widetilde L)\geq \inf_{\varphi \in V} c_-(\widetilde L_\varphi , \widetilde L) - n \sup_{\varphi \in V} \gamma (\widetilde L_\varphi, L)=-\sup_{\varphi\in V}c_+(\widetilde L_{\varphi^{-1}}, \widetilde L)- n \sup_{\varphi \in V} \gamma ( L_\varphi, L)
\end{gather*} 
so that 
$$-c_-(\widetilde \Lambda , 0_{V }\times \widetilde L)\leq \sup_{\varphi\in V}c_+(\widetilde L_{\varphi^{-1}}, \widetilde L)+ n \sup_{\varphi \in V} \gamma (\widetilde L_\varphi, L)
$$
hence adding the inequalities for $c_{+}$ and $- c_{-}$ we get
 \begin{equation}\label{Eq-7.1} \gamma(\widetilde \Lambda , 0_{V }\times \widetilde L) \leq \sup_{\varphi\in V}c_+(\widetilde L_{\varphi^{-1}}, \widetilde L)+\sup_{\varphi\in V}c_+(\widetilde L_{\varphi}, \widetilde L)+n \sup_{\varphi \in V} \gamma ( L_\varphi, L)
\end{equation} 
Applying  the triangle inequality (Proposition \ref{Prop-triangle}), we have
$$c_+(\widetilde L_{\varphi}, \widetilde L)\geq c_+(\widetilde L_\varphi, 0_{N}) - c_-(\widetilde L, 0_{N})$$ and since 
$$c_+(\widetilde L_\varphi, 0_{N})=c_+(\tau_\varphi(\widetilde L), 0_{N})=c_+(\tau_\varphi(\widetilde L), \tau_\varphi(0_{N}))= c_+(\widetilde L, 0_{N})$$
we get 
$$c_+(\widetilde L_{\varphi}, \widetilde L)\geq c_+(\widetilde L, 0_{N}) - c_-(\widetilde L, 0_{N}) \geq 0$$
and similarly $c_-(\widetilde L_{\varphi}, \widetilde L)\leq 0$, so that 
$$c_+(\widetilde L_{\varphi}, \widetilde L)\leq    \gamma (\widetilde L_{\varphi}, \widetilde L)$$
and $$c_+(\widetilde L_{\varphi^{-1}}, \widetilde L)\leq \gamma (\widetilde L_{\varphi^{-1}}, \widetilde L)= \gamma (\widetilde L_{\varphi}, \widetilde L) $$
hence (\ref{Eq-7.1}) becomes
\begin{equation} 
\gamma(\Lambda , 0_{V }\times L) \leq (2+n) \sup_{\varphi \in V} \gamma ( L_\varphi, L)
\end{equation} 

On the other hand the ordinary reduction inequality at $x=x_0\in N$ yields
\begin{equation} 
\gamma(\Lambda , 0_{V }\times  L) \geq \sup_{x_0\in N} \gamma ((\cF_\Lambda)_{x_0}, (\cF_{0_V\times L})_{x_0})
\end{equation} 
Let us denote by $\Phi_{x_0}$ the map $\varphi \mapsto \varphi(x_0)$. Then $\Phi_{x_0}$ is a map from $V$  to $N$
but $(\cF_\Lambda)_{x_0}\simeq \Phi_{x_0}^{-1}\cF_{L}$ and $(\cF_{0_V\times L})_{x_0}= k_{V\times [0, +\infty [}\otimes(\cF_L)_{x_0}$. 
As a result 
$$\gamma ((\cF_\Lambda)_{x_0}, (\cF_{0_V\times L})_{x_0})=
\gamma ((\cF_{\Lambda})_{x_0}, k_{V\times [0, +\infty [}\otimes (\cF_L)_{x_0}) 
\geq \gamma ((\cF_\Lambda)_{x_0}))-\gamma ((\cF_L)_{x_0})$$ 
But since $1=\mu$ in $H^*(\{x_0\})$ we have 
 $$\gamma ((\cF_L)_{x_0})=c_+((\cF_L)_{x_0})-c_-((\cF_L)_{x_0})=0$$
hence
 $$\gamma (\Lambda, 0_{V}\times L)=\gamma ((\cF_\Lambda)_{x_0}, (\cF_{0_V\times L})_{x_0})=\gamma ((\cF_\Lambda)_{x_0}, (\cF_L)_{x_0}) \geq \gamma ((\cF_\Lambda)_{x_0})$$
On the other hand, since $\Phi_{x_{0}}^{*}(\mu_{N})\neq 0$ by Condition ($\star$) we may apply
 Corollary \ref{Cor-5.5} (\ref{Cor-5.5-ii}),  so we get
$$\gamma ((\cF_\Lambda)_{x_0})=\gamma (\Phi_{x_0}^{-1}\cF_L)=\gamma(\cF_L)$$
 We may then conclude that 
 $$\gamma(L) \leq  \gamma(\Lambda , 0_{V }\times L) \leq (n+2) \sup_{\varphi \in V} \gamma (L_\varphi, L)$$
 This concludes the proof.
 \end{proof} 
     \begin{figure}[ht]
\center\begin{overpic}[width=8cm]{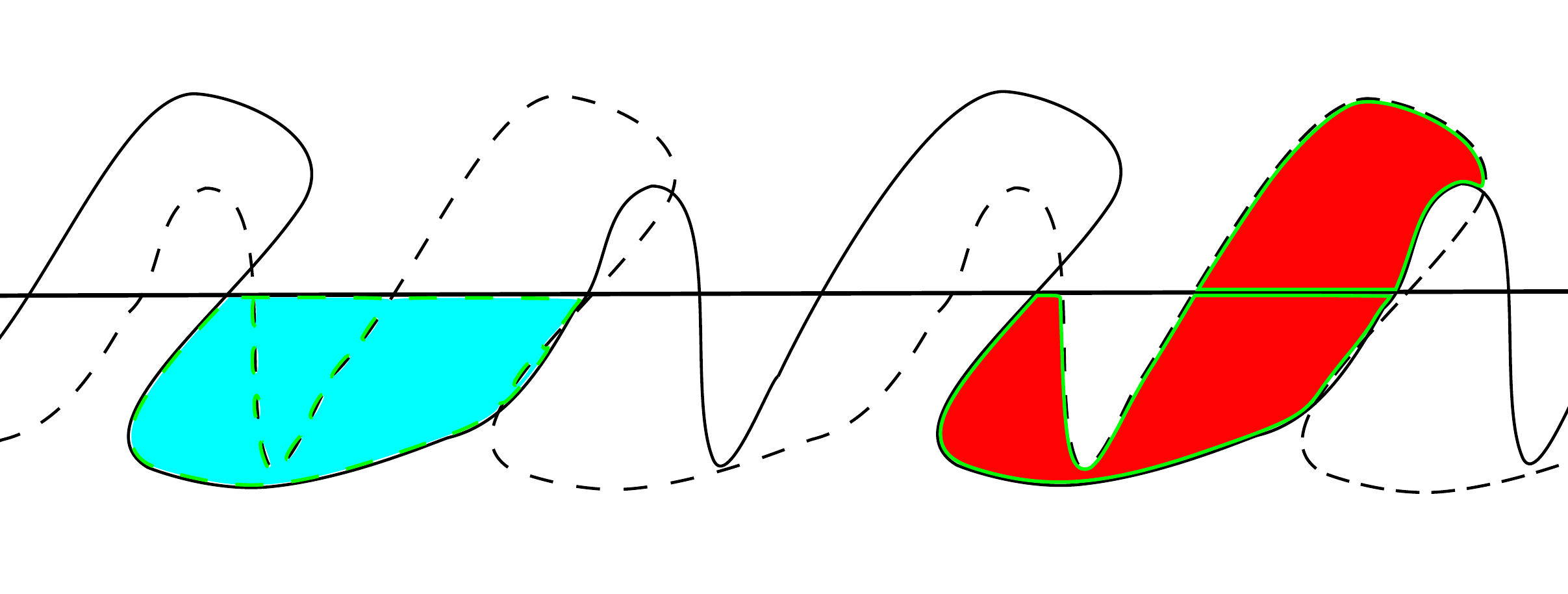}
 \end{overpic}
\caption{Illustration of Theorem \ref{Thm-6.3} for $N=S^1$: the red area, $\gamma(\varphi(L),L)$  is greater than a third of the blue area, $\gamma(L)$. }
\label{fig-3}
\end{figure}

Using the previous result we may prove 
\begin{prop} \label{Prop-6.4}
Let $P=G/H$ be a homogeneous space of dimension $d$, with an action of a compact Lie group $G$ of dimensions $n$. Then there exists $g\in G$ such that 
$$\gamma(L) \leq (d+1)(n+2) \sup_{g\in G} \gamma (L, \tau_g(L))$$ where $\tau_g: T^*P \longrightarrow T^*P$ is induced by multiplication by $g$ on $T^*P$, i.e. $\tau_g(x,p)=(g\cdot x, p\circ dg^{-1})$. 
\end{prop}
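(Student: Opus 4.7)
The plan is to reduce Proposition \ref{Prop-6.4} to Theorem \ref{Thm-6.3} applied to the Lie group $G$ acting on itself by left translation --- where Condition ($\star$) holds trivially --- after pulling back $L$ along the orbit map $\pi := \Phi_{x_0} : G \to P$, $g \mapsto g \cdot x_0$. The extra factor $(d+1)$ will absorb the possible failure of the cohomological hypothesis $\pi^*(\mu_P) \neq 0$ that would be needed to invoke Corollary \ref{Cor-5.5}(\ref{Cor-5.5-ii}) directly (for example $\pi^*(\mu_P)=0$ already for the Hopf fibration $SU(2)\to S^2$).

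First I would set $\tilde L := \Lambda_\pi^{-1} L$. Since $\pi$ is the $H$-principal bundle with $H = \mathrm{Stab}(x_0)$, hence a submersion, the Remark following Proposition \ref{Prop-4.4} gives that $\tilde L$ is a well-defined embedded element of $\mathcal L(T^*G)$. The $G$-equivariance $\pi(hg) = h\cdot\pi(g)$ translates into $\tau_h \tilde L = \Lambda_\pi^{-1}(\tau_h L)$ for every $h\in G$, so combined with Corollary \ref{Cor-5.5}(\ref{Cor-5.5-i}) one obtains $\gamma(\tilde L,\tau_h\tilde L) \leq \gamma(L,\tau_h L)$. Since $G$ is a compact Lie group it satisfies Condition ($\star$) (as noted just after Definition \ref{Def-2.1}), and Theorem \ref{Thm-6.3} applied to $\tilde L \in \mathcal L(T^*G)$ will then yield
$$
\gamma(\tilde L) \leq (n+2)\sup_{h\in G}\gamma(\tilde L,\tau_h\tilde L) \leq (n+2)\sup_{h\in G}\gamma(L,\tau_h L).
$$

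The main step --- and the main obstacle --- is then to prove $\gamma(L) \leq (d+1)\,\gamma(\tilde L)$, from which the claimed inequality follows by chaining. The idea is to cover $P$ by $d+1$ open sets over which the $H$-bundle $\pi$ trivializes, and then mimic the Mayer--Vietoris/triangulation argument of Theorem \ref{Thm-Reverse-reduction-inequality}. Concretely, I would take a fine enough triangulation of $P$, colour its simplices with $d+1$ colours so that same-colour simplices are pairwise disjoint (as in Figure \ref{fig-2}), and take small open neighbourhoods to produce an open cover $U_0,\dots,U_d$ of $P$ with each $U_j = \bigsqcup_k V_{j,k}$ a disjoint union of contractible balls. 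Since any principal $H$-bundle over a contractible base is trivial, $\pi$ admits a smooth section $s_{j,k}:V_{j,k}\to G$ on each $V_{j,k}$; the identity $\pi\circ s_{j,k}=\mathrm{id}$ gives $s_{j,k}^{-1}(\pi^{-1}\cF_L)=\cF_L|_{V_{j,k}}$, and Corollary \ref{Cor-5.5}(\ref{Cor-5.5-i}) applied to $s_{j,k}$ then forces $\gamma(V_{j,k};\cF_L) \leq \gamma(\tilde L)$, hence $\gamma(U_j;\cF_L)\leq \gamma(\tilde L)$. Iterating the Mayer--Vietoris estimate
$$
c_+(U\cup V;\cF) \leq \max\{c_+(U,\cF),c_+(V,\cF)\} + \gamma(U,V;\cF)
$$
obtained at the end of the proof of Theorem \ref{Thm-Reverse-reduction-inequality}, together with its counterpart for $c_-$, exactly $d$ times to fuse $U_0,\ldots,U_d$ into $P$, and then taking the difference of the resulting bounds on $c_+$ and $c_-$, produces the desired $\gamma(L)\leq (d+1)\gamma(\tilde L)$. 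The delicate part --- exactly as in the proof of Theorem \ref{Thm-Reverse-reduction-inequality} --- is arranging the bookkeeping so that merging the $d+1$ pieces costs only $d\cdot\max_j\gamma(U_j;\cF_L)$ on top of one $\max_j\gamma(U_j;\cF_L)$ from the extremal piece, rather than $2(d+1)$ times that quantity.
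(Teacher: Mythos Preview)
Your overall strategy matches the paper's: pull $L$ back to $T^*G$ along an orbit map, apply Theorem \ref{Thm-6.3} there, and use equivariance together with Corollary \ref{Cor-5.5}(\ref{Cor-5.5-i}) to bound $\gamma(\tilde L,\tau_h\tilde L)$ by $\gamma(L,\tau_h L)$. The divergence is only in how you obtain the key inequality $\gamma(L)\leq (d+1)\,\gamma(\tilde L)$.

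The paper does \emph{not} attempt your Mayer--Vietoris/local-section argument. Instead it pulls $L$ back along the full action map $m:G\times P\to P$, obtaining $L'=\Lambda_m^{-1}(L)\in\mathcal L(T^*(G\times P))$. The reductions of $L'$ along the $P$-factor are precisely the orbit-map pullbacks $m_{y_0}^*(L)$; these are embedded (since each $m_{y_0}$ is a submersion) and are all right-translates of your $\tilde L$, hence share its $\gamma$. Theorem \ref{Thm-Reverse-reduction-inequality} then applies directly with $X=P$ and gives $\gamma(L')\leq (d+1)\,\gamma(\tilde L)$, while the ordinary reduction at $g=e$ yields $\gamma(L)\leq\gamma(L')$. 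This is a two-line application of results already in hand, and avoids any new Mayer--Vietoris bookkeeping.

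Your step 4, by contrast, has a genuine gap. First, Corollary \ref{Cor-5.5}(\ref{Cor-5.5-i}) is stated for $\cF\in\mathcal T_0(Y)$ with $Y$ closed; applying it to $s_{j,k}:V_{j,k}\to G$ with $V_{j,k}$ an open ball is not legitimate, and in any case the quantity $\gamma(f^{-1}\cF)$ appearing there (defined via $\mu_Y\in H^n(Y)$ on a closed manifold) is not the local quantity $\gamma(V_{j,k};\cF)$ of Definition 5.9 (which uses $H^d_{c}(V_{j,k})$). Second, and more seriously, the Mayer--Vietoris estimate does not iterate as you describe: at the $k$-th step the error term is $\gamma(U_0\cup\cdots\cup U_{k-1};\cF_L)$ (or its boundary depth), not $\max_j\gamma(U_j;\cF_L)$. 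In the proof of Theorem \ref{Thm-Reverse-reduction-inequality} this circularity is broken because the boundary-depth term is controlled by the \emph{stalks}, via Kislev--Shelukhin on the embedded reductions in $T^*Y$; in your setup there is no $Y$-factor, and the stalks of $\cF_L$ at single points of $P$ carry no useful Kislev--Shelukhin bound. The local sections give you nothing on growing unions.
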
  \begin{proof} 
Set $P=G/H$ and let $m : G\times P \longrightarrow P$ be the map defining the action. For $g\in G, y\in P$ set $m_{g}(y)=m(g,y), m_{y}(g)=m(g,y)$ (this should cause no confusion). Set $L' = \Lambda_{m}^{-1}(L)$, where $L \subset DT^{*}P$. Then $L' \subset T^{*}(G\times P)$ is given by 
 $$L' = \left\{(g,p_{g}, y,p_{y}) \mid m(g,y)=x, p_{x}dm(g,y)=p_{g}dg+p_{y}dy, (x,p_{x})\in L\right \}$$
The reduction of $L'$ at $g=g_{0}$ is
 $$ L'_{g_{0}}= \left\{(y,p_{y}) \mid \exists (x,p_x )\in L, m_{g_{0}}(y)=x, p_{x}dm_{g_{0}}(y)=p_{y}\right \}= m_{g_{0}}^{*}(L)$$
 and at $y=y_{0}$ it is  
 $$L'_{y_{0}}= \left\{(g,p_{g}, y,p_{y}) \mid m_{y_{0}}(g)=x, p_{x}dm_{y_{0}}(g)=p_{g}dg,  (x,p_{x})\in L\right \}=m_{y_{0}}^{*}(L)$$
 Since $dm_{y_{0}}$ is onto,  $m_{y_{0}}^{*}(L)$ is embedded (and of course, so is $m_{g_{0}}^{*}(L)$). 
 We then have by the inverse reduction inequality  $$\gamma(L') \leq (d+1) \sup_{y_0}\gamma(m_{y_{0}}^{*}(L)) 
 $$   Since $m_{y_0}^*(L)$ is in $T^*G$ we have by Theorem \ref{Thm-6.3}
$$ \gamma(m_{y_{0}}^{*}(L))  \leq (n+2) \sup_{g\in G} \gamma (m_{y_0}^*(L), \tau_gm_{y_0}^*(L))=(n+2) \sup_{g\in G} \gamma (m_{y_0}^*(L), m_{y_0}^*(\tau_g(L)))$$  
Now applying the inequality from Corollary \ref{Cor-5.5} we have
 $$\gamma (m_{y_0}^{*}(L_1), m_{y_0}^{*}(L_2))\leq \gamma (L_1, L_2)$$  hence 
 $$\gamma(L') \leq (n+2)  \sup_{g\in G} \gamma (m_{y_0}^*(L), m_{y_0}^*(\tau_g(L)))\leq  (n+2)\sup_{g\in G} \gamma (L, \tau_g(L))$$
 The same inequality implies that $\gamma(m_e^*(L')) \leq \gamma (L')$ so that finally, using $m_e^*(L')=L$ we may conclude
 $$\gamma(L) \leq  (d+1)(n+2)\sup_{g\in G} \gamma (L, \tau_g(L))$$

\end{proof} 
 Finally we state the following 
  
\begin{conjecture}
Let $(M, \omega)$ be a symplectic manifold. Let $L$ be an exact Lagrangian. There exists a constant $c$ depending on the pair $(M, L)$  such that for any $L_1$ Hamiltonianly isotopic to $L$, such that $\gamma(L,L_1)$ is small enough, there exists a Hamiltonian isotopy $\varphi^t$ preserving $L$ such that $$\gamma (L_1, \varphi^1(L_1))\geq c \gamma (L,L_1)$$
\end{conjecture}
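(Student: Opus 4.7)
The plan is to adapt the strategy of Theorem \ref{Thm-6.3} to the setting where $L$ sits inside an arbitrary symplectic manifold $(M,\omega)$. The first step is to pass to a Weinstein neighborhood $U$ of $L$ in $M$, symplectomorphic to a neighborhood of the zero section in $T^*L$. Assuming $\gamma(L,L_{1})$ is small enough, one would like to show that $L_{1}$ is contained in $U$ and hence, viewed there, is an exact Lagrangian in $T^*L$ satisfying $\gamma_{T^*L}(0_{L},L_{1})=\gamma_{M}(L,L_{1})$. Cut-off Hamiltonians supported in $U$ that preserve $L$ then correspond to Hamiltonian isotopies of $M$ preserving $L$, so it suffices to prove the statement inside $T^*L$.

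Next, one constructs a family of Hamiltonian isotopies $(\varphi_{v})_{v\in V}$ of $T^*L$ preserving $0_{L}$, parameterized by a closed manifold $V$ with a basepoint $v_{0}$ giving the identity, and forms the Lagrangian $\widetilde\Lambda\subset T^*(V\times L)$ of Lemma \ref{Lemma-7.5} whose reduction at $v$ is $\varphi_{v}(L_{1})$. Theorem \ref{Thm-Reverse-reduction-inequality} yields
\[
\gamma(\widetilde\Lambda,0_{V}\times L_{1})\le (\dim V+2)\sup_{v\in V}\gamma(L_{1},\varphi_{v}(L_{1})),
\]
while the ordinary reduction inequality combined with Corollary \ref{Cor-5.5}(\ref{Cor-5.5-ii}) — exactly as in the proof of Theorem \ref{Thm-6.3} — bounds $\gamma(\widetilde\Lambda,0_{V}\times L_{1})$ from below by $\gamma(\Phi_{x_{0}}^{-1}\cF_{L_{1}})$, where $\Phi_{x_{0}}(v)=\varphi_{v}(x_{0})$. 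Provided the evaluation $\Phi_{x_{0}}\colon V\to L$ satisfies $\Phi_{x_{0}}^{*}(\mu_{L})\ne 0$, this lower bound equals $\gamma(L,L_{1})$, and rearranging yields the conjecture with $c=1/(\dim V+2)$.

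When $L$ satisfies Condition ($\star$) one may take $V$ as in Definition \ref{Def-2.1}, and the argument is a direct translation of Theorem \ref{Thm-6.3}/Proposition \ref{Prop-6.4}. For general compact $L$, one cannot in general find a global $V$ with $\Phi_{x_{0}}^{*}(\mu_{L})\ne 0$; instead one covers $L$ by finitely many Darboux--Weinstein charts $B_{\alpha}\cong T^{*}\mathbb R^{n}$, uses compactly supported translation-like Hamiltonian families in each chart (so that $V_{\alpha}$ has a nonzero-degree evaluation onto $B_{\alpha}$), and glues the resulting estimates via a Mayer--Vietoris argument analogous to the one in the proof of Theorem \ref{Thm-Reverse-reduction-inequality}, absorbing the covering combinatorics into the constant $c=c(M,L)$.

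The main obstacle is the localization step: $\gamma$-closeness is strictly weaker than $C^{0}$-closeness, so the implication ``$\gamma(L,L_{1})$ small $\Rightarrow L_{1}\subset U$'' is not automatic and is closely tied to the questions about $\gamma$-supports studied in \cite{Viterbo-gammas}. A secondary difficulty is producing, for arbitrary $L$, a family $V$ with an evaluation map of nonzero degree; the covering argument above achieves this only at the cost of a constant that degenerates with the topology of $L$, which is consistent with the statement allowing $c$ to depend on $(M,L)$.
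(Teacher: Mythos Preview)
This statement is labelled a \emph{Conjecture} in the paper and is left open; there is no proof to compare against. Your proposal is a reasonable outline of how one might try to attack it, and you correctly isolate the two essential difficulties, but neither is resolved, so what you have written is a strategy rather than a proof.

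The localization step is a genuine gap, as you note: there is no known mechanism by which $\gamma(L,L_{1})$ small forces $L_{1}$ into a prescribed Weinstein neighbourhood of $L$, and this is precisely the circle of questions around $\gamma$-supports and Conjecture~\ref{Conjecture-6.8} that the paper and \cite{Viterbo-gammas} leave open. Even granting localization, you would also need $\gamma_{T^{*}L}(0_{L},L_{1})=\gamma_{M}(L,L_{1})$, which is not automatic. More seriously, your covering argument for general $L$ does not go through as stated: if the isotopies $\varphi_{v}$ are lifts of diffeomorphisms of $L$ supported in a chart $B_{\alpha}\subsetneq L$, then the evaluation $\Phi_{x_{0}}\colon V_{\alpha}\to L$ has image contained in $B_{\alpha}$, hence $\Phi_{x_{0}}^{*}(\mu_{L})=0$ and Corollary~\ref{Cor-5.5}(\ref{Cor-5.5-ii}) gives no lower bound. ``Nonzero degree onto $B_{\alpha}$'' is not the hypothesis that appears there; one needs $\Phi_{x_{0}}^{*}(\mu_{L})\neq 0$ in $H^{*}(V)$, which is exactly Condition~($\star$) and is not available for arbitrary $L$ (cf.\ the remark in the paper that no negatively curved manifold is known to satisfy the relevant conclusions). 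So the reduction to Theorem~\ref{Thm-6.3} works cleanly only when $L$ already satisfies Condition~($\star$), which is the case the paper has in hand; the general case remains genuinely open.
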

\begin{rem} 
 Note that $\varphi^t$ is an isometry for $\gamma$. For $\varphi^{t}$ preserving $0_{N}$, 
we can consider the triangle $0_N, L, \varphi^t(L)$. It is clearly isosceles, since $\gamma(L,0_N)=\gamma (\varphi^t(L),0_N)$ because $\varphi^t$ preserves $0_N$. Now  $$\gamma(L, \varphi^1(L))\geq c \gamma (0_N,L)$$ so we may find $t_0\in [0,1]$ such that $\gamma (L, \varphi^{t_0}(L))=c\gamma (L, 0_N)$.  In particular if $c=1$ then the triangle $(0_N,L, \varphi^{t_0}(L))$ is equilateral. Existence of equilateral triangles in metric spaces is an interesting question that was explored in (see \cite{Furstenberg-Katznelson-Weiss, Iosevich-equilateral}). If $c<1$ as in our case, we still get ``wide isosceles triangles''.  
\end{rem}

 \section{Spectral boundedness of geometrically bounded Lagrangians}
 Let $N$ be a closed manifold endowed with a Riemannian metric $g$. We denote by $D_rT^*N$ the radius $r$ cotangent disc bundle, and set $DT^*N=D_1T^*N$. 
 
 We shall deal in this section with the following conjecture from \cite{SHT}

 \begin{conjecture}[Bounded Lagrangians are spectrally bounded]\label{Conjecture-Viterbo}
 Let $N$ be a closed manifold. There exists a constant $C_N$ such that for any 
 $L \in {\mathfrak L}(T^*N)$ such that $L \subset DT^*N$, we have $$\gamma (L) \leq C_N$$
 \end{conjecture}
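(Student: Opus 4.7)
The plan is to prove Conjecture~\ref{Conjecture-Viterbo} in the case that $N = G/H$ is a compact homogeneous space, which is the setting where the machinery developed in Section~\ref{Section-6} applies directly (yielding Theorem~\ref{Thm-viterbo-conjecture}). First, I would equip $N$ with a $G$-invariant Riemannian metric; then for every $g \in G$ the cotangent lift $\tau_g$ preserves each disc bundle $D_rT^*N$ fiberwise isometrically, so in particular $\tau_g(L) \subset DT^*N$ whenever $L \subset DT^*N$. Applying Proposition~\ref{Prop-6.4} directly,
$$\gamma(L) \leq (d+1)(n+2)\,\sup_{g \in G}\gamma(L,\tau_g(L)),$$
with $d = \dim N$ and $n = \dim G$, so the problem reduces to bounding $\gamma(L,\tau_g(L))$ by a constant depending only on $N$ (and not on $L$).

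The key step is a Hofer-type bound on $\gamma(L,\tau_g(L))$. Join $g$ to the identity by a smooth path $\{g^t\}_{t\in[0,1]} \subset G$ whose generating time-dependent vector field $X_g$ on $N$ satisfies $\|X_g\|_{C^0} \leq K$, with $K$ depending only on $G$; this is possible because $G$ is compact. The cotangent lift is generated by the Hamiltonian $H_g(t,x,p) = \langle p, X_g(t,x)\rangle$, whose values on $DT^*N$ are bounded by $K$. Since $\tau_g^t$ preserves every $D_rT^*N$, I can multiply $H_g$ by a cut-off $\chi(\abs{p})$ equal to $1$ on $DT^*N$ and supported in $D_2T^*N$; the resulting compactly supported Hamiltonian $\widetilde H_g$ generates the same flow on $DT^*N$, so $\tau_g(L) = \varphi_{\widetilde H_g}^1(L)$ as Lagrangians, with $\|\widetilde H_g\|_\infty \leq 2K$. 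The standard Hofer inequality for the spectral norm then yields $\gamma(L,\tau_g(L)) \leq 2\|\widetilde H_g\|_\infty \leq 4K$.

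Combining the two ingredients gives $\gamma(L) \leq 4K(d+1)(n+2) =: C_N$, as required. The principal subtlety lies in the cut-off argument: one must verify that truncating $H_g$ outside a neighborhood of $DT^*N$ does not affect the spectral invariants of the pair $(L,\tau_g(L))$. It is precisely the $G$-invariance of the chosen metric that guarantees $L$, $\tau_g(L)$, and the entire isotopy $\{\tau_g^t(L)\}_{t\in[0,1]}$ remain inside $DT^*N$, so that the cut-off leaves the relevant Hamiltonian dynamics untouched. Once this step is secured, the result is a clean combination of the inverse reduction inequality (Theorem~\ref{Thm-Reverse-reduction-inequality}), its specialization to homogeneous spaces (Proposition~\ref{Prop-6.4}), and the classical Hofer-type bound on the Lagrangian spectral norm.
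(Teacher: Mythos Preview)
Your proposal is correct and follows essentially the same route as the paper, which likewise only establishes the conjecture for the class $\mathcal V$ (here, homogeneous spaces) via Proposition~\ref{Prop-6.4} combined with a Hofer-type bound; this is exactly the content of Propositions~\ref{Prop-Viterbo-conjecture} and~\ref{Prop-7.6}.

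The one place where your argument differs slightly is the Hofer bound itself. You cut off $H_g$ outside $DT^*N$ and invoke the global Hofer inequality, and you flag the cut-off as ``the principal subtlety''. The paper instead proves Lemma~\ref{Lemma-7.3}, which gives $\gamma(\varphi_H^1(L),L)\leq 2\Vert H_{\mid L}\Vert_{C^0}$ directly, with the supremum taken only over points of $L$. The proof is that $c_\pm(\varphi_H^t(\widetilde L),\widetilde L)$ is the action of a Hamiltonian chord with both endpoints on $L$, and differentiating in $t$ gives $-H$ evaluated at the endpoint, which lies in $L$. Since $L\subset DT^*N$ and $H_g(t,x,p)=\langle p,X_g(t,x)\rangle$ is already bounded there by compactness of $G$, no truncation is needed and the subtlety you identify simply does not arise. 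Your cut-off argument is fine (and the $G$-invariance of the metric does guarantee the whole isotopy stays in $DT^*N$, as you note), but Lemma~\ref{Lemma-7.3} is the cleaner tool here.
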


 We claim
 \begin{prop} \label{Prop-Viterbo-conjecture}
 Let  $N$ satisfy  Condition  ($\star$), and $DT^*N$ be the unit disc bundle of $T^*N$ for some metric $g$.  Then there exists a constant $C_N$ such that for any $L \in \LL (T^*N)$ contained in $DT^*N$ we have $$\gamma (L) \leq C_{N,g}$$
 \end{prop}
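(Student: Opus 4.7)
The plan is to combine Theorem~\ref{Thm-6.3} with a uniform Hofer-type bound on $\gamma(L,\tau_\varphi(L))$ for $\varphi$ ranging over the compact manifold $V$ from Condition~($\star$) and $L\subset DT^*N$. Theorem~\ref{Thm-6.3} supplies, for each exact $L$, some $\varphi\in V$ with $\gamma(L,\tau_\varphi(L))>\gamma(L)/(n+2)$, so it is enough to produce a constant $C=C_{N,g,V}$, independent of $L\subset DT^*N$ and of $\varphi\in V$, such that $\gamma(L,\tau_\varphi(L))\leq C$ for every such pair. The proposition will then follow with $C_{N,g}:=(n+2)C$.

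To build this uniform bound, I would first choose a continuous family of smooth isotopies $\{\varphi_t\}_{t\in[0,1]}$ in $\Diff_0(N)$ joining $\Id$ to $\Phi(\varphi)$, depending continuously on $\varphi\in V$; let $X_\varphi(t,q)$ be the associated time-dependent vector field on $N$. Because $V$ is compact, both
\[
A:=\sup_{\varphi\in V,\,t\in[0,1]}\|X_\varphi(t,\cdot)\|_g\quad\text{and}\quad r:=\sup_{\varphi\in V,\,t\in[0,1],\,q\in N}\|d\varphi_t(q)^{-1}\|_g
\]
are finite. The lift of $\varphi_t$ to $T^*N$ is the Hamiltonian flow of the fibrewise linear function $H_\varphi(t,q,p)=\langle p,X_\varphi(t,q)\rangle$, and since $\tau_{\varphi_t}(q,p)=(\varphi_t(q),p\circ d\varphi_t(q)^{-1})$, the entire flow of $L\subset DT^*N$ under $H_\varphi$ stays inside $D_rT^*N$.

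Next I would replace $H_\varphi$ by a compactly supported cutoff $\tilde H_\varphi=\rho\cdot H_\varphi$, where $\rho:T^*N\to[0,1]$ is smooth, compactly supported in $D_{r+1}T^*N$, and identically $1$ on $D_rT^*N$. Since $\rho\equiv 1$ along all Hamiltonian trajectories of $H_\varphi$ that start on $L$, the time-one map of $\tilde H_\varphi$ still sends $L$ to $\tau_\varphi(L)$. On the support of $\rho$ we have $|p|\leq r+1$, so
\[
\int_0^1 \osc_{T^*N}\tilde H_\varphi(t,\cdot)\,dt\ \leq\ 2(r+1)A,
\]
uniformly in $\varphi\in V$. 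The standard estimate for the Lagrangian spectral norm of a compactly supported Hamiltonian isotopy (namely $\gamma(L,\psi(L))\leq\int_0^1\osc \tilde H(t,\cdot)\,dt$ whenever $\tilde H$ generates $\psi$) then gives the desired uniform bound $\gamma(L,\tau_\varphi(L))\leq 2(r+1)A=:C$ for every $L\subset DT^*N$ and every $\varphi\in V$.

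The only point that requires a little care is the cutoff step, since the natural generator $H_\varphi$ is not compactly supported and the Hofer bound for $\gamma$ needs a compactly supported Hamiltonian; one has to verify that the Hamiltonian flow of $\tilde H_\varphi$ really agrees with that of $H_\varphi$ on the relevant trajectories, which reduces to the geometric statement that those trajectories stay inside the region where $\rho\equiv 1$, which is exactly what the estimate on $r$ gives. Once this is settled, combining the uniform bound with Theorem~\ref{Thm-6.3} yields $\gamma(L)\leq (n+2)C$ as required. Condition~($\star$) enters the argument only indirectly, through its use in Theorem~\ref{Thm-6.3}.
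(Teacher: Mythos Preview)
Your proposal is correct and follows essentially the same strategy as the paper: invoke Theorem~\ref{Thm-6.3} and then use compactness of $V$ together with the fibrewise linear form of the generating Hamiltonian $H_\varphi(t,q,p)=\langle p,X_\varphi(t,q)\rangle$ to get a uniform bound on $\gamma(L,\tau_\varphi(L))$ for $L\subset DT^*N$. The only difference is that the paper avoids your cutoff step by using Lemma~\ref{Lemma-7.3}, which bounds $\gamma(\varphi_H^1(L),L)$ by $2\|H_{|L}\|_{C^0}$ (the sup of $|H|$ on $[0,1]\times L$ only); since $L\subset DT^*N$, this immediately gives the uniform estimate without having to track where the isotopy carries $L$ or to truncate $H$.
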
 

First we need the easy  
 \begin{lem} \label{Lemma-7.3}
 Let $\varphi^t$ be the Hamiltonian flow of $H(t,q,p)$ in $T^*N$ and $L\in {\LL} (T^*N)$. Then we have the inequality
 $$\gamma (\varphi^1(L), L) \leq 2\Vert H_{\mid L} \Vert _{C^0}$$
 where $$\Vert H_{ \mid L} \Vert _{C^0}=\sup\left \{ \vert H(t,q,p)\vert  \mid t\in [0,1], (q,p)\in L \right \}$$
 \end{lem}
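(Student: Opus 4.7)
The plan is to reduce the statement to the classical Hofer-type bound
$$\gamma(\varphi^{1}_{K}(L),L)\;\leq\;\int_{0}^{1}\bigl(\max_{z}K_{t}-\min_{z}K_{t}\bigr)\,dt\;\leq\;2\|K\|_{C^{0}},$$
which holds for any smooth Hamiltonian $K$ by the action-filtration arguments of Section~\ref{Section-4}. Since $\gamma$ depends only on the graded Lagrangians and not on the generating Hamiltonian, it suffices to produce, for each $\varepsilon>0$, a modified Hamiltonian $\widetilde H$ with $\widetilde\varphi^{1}(L)=\varphi^{1}(L)$ in $\mathcal L(T^{*}N)$ and $\|\widetilde H\|_{C^{0}}\leq\|H_{\mid L}\|_{C^{0}}+\varepsilon$. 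The strategy proceeds in two steps: an autonomous case via a cutoff-and-discretization construction, then the general case by suspension.

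\emph{Autonomous case.} Since $H$ is constant along its Hamiltonian flow, we have the key identity $\sup_{t\in[0,1],\,z\in\varphi^{t}(L)}|H(z)|=\sup_{z\in L}|H(z)|=\|H_{\mid L}\|_{C^{0}}$. Partition $[0,1]$ into $N$ equal subintervals $I_{k}=[k/N,(k+1)/N]$; for $N$ large, the trajectory-slab $\{\varphi^{t}(L):t\in I_{k}\}$ is contained in an arbitrarily small tubular neighborhood $U_{k}$ of $\varphi^{k/N}(L)$. Choose smooth cutoffs $\chi_{k}\colon T^{*}N\to[0,1]$ equal to $1$ on the trajectory-slab and supported in $U_{k}$, and set $\widetilde H(t,z)=\chi_{k}(z)\,H(z)$ for $t\in I_{k}$. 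Along each trajectory $\chi_{k}\equiv 1$ and $d\chi_{k}\equiv 0$, so $X_{\widetilde H}=X_{H}$ there; hence $\widetilde H$ generates the same evolution on $L$ as $H$, giving $\widetilde\varphi^{1}(L)=\varphi^{1}(L)$ as graded Lagrangians. By continuity of $H$ and the identity above, $\|\widetilde H\|_{C^{0}}\leq\|H_{\mid L}\|_{C^{0}}+\varepsilon$ for $N$ large and $U_{k}$ small; the Hofer estimate then yields the autonomous case after letting $\varepsilon\to 0$.

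\emph{General case via suspension.} After smoothly extending $H$ in $t$ so as to be $1$-periodic (without altering its values on $[0,1]$, so without affecting the time-$1$ flow from $L$), introduce the autonomous Hamiltonian $\widehat H(q,p,s,\sigma)=\sigma+H(s,q,p)$ on $T^{*}(N\times S^{1})$ and the exact Lagrangian $\widehat L=L\times S^{1}\times\{\sigma=0\}$. The flow $\widehat\varphi^{t}$ satisfies $\dot s=1$ and projects in the $T^{*}N$-factor to $\varphi^{t}_{H}$; consequently the reduction of $\widehat\varphi^{1}(\widehat L)$ at each slice $\{s=s_{0}\}$ is $\varphi^{1}_{H}(L)$, while that of $\widehat L$ is $L$. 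By the reduction inequality (Proposition~\ref{Prop-reduction-inequality}),
$$\gamma(\varphi^{1}(L),L)\;\leq\;\gamma\bigl(\widehat\varphi^{1}(\widehat L),\widehat L\bigr).$$
Applying the autonomous case to $\widehat H$ gives $\gamma(\widehat\varphi^{1}(\widehat L),\widehat L)\leq 2\|\widehat H_{\mid\widehat L}\|_{C^{0}}$; since $\widehat H_{\mid\widehat L}(q,p,s,0)=H(s,q,p)$, one has $\|\widehat H_{\mid\widehat L}\|_{C^{0}}=\|H_{\mid L}\|_{C^{0}}$, which concludes the proof. The main obstacle is the careful construction of the cutoff Hamiltonian in the autonomous case: one must shrink the tubular neighborhoods $U_{k}$ enough for continuity of $H$ to transfer the sup from $U_{k}$ to $L$, while still preserving both the flow of $L$ and the graded structure of $\varphi^{1}(L)$.
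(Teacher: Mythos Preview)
Your approach works in outline but is far more elaborate than the paper's, and the suspension step has loose ends.

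The paper's proof is a one-line refinement of the very Hofer-type estimate you invoke as a black box. One writes $c_+(\varphi_H^t(\widetilde L),\widetilde L)=A_t(\gamma_t)$ for a Hamiltonian chord $\gamma_t$ whose \emph{both} endpoints lie on $L$ (since a point of $\varphi_H^t(L)\cap L$ is $\varphi_H^t(z_0)$ with $z_0\in L$ and lies in $L$ itself). The standard computation gives $\frac{d}{dt}A_t(\gamma_t)=-H(t,\gamma_t(t))$ with $\gamma_t(t)\in L$, so the derivative is already bounded by $\|H_{\mid L}\|_{C^0}$ rather than $\|H\|_{C^0}$; integrating from $t=0$ yields $|c_\pm|\le\|H_{\mid L}\|_{C^0}$ directly. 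In other words, the improvement from the global norm to the norm on $L$ is automatic once one notices where the chord endpoint sits --- no modification of $H$ is needed.

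Your route trades this observation for a cutoff-plus-suspension construction. Two points need care. First, one cannot in general extend $H$ to a smooth $1$-periodic function ``without altering its values on $[0,1]$'' when $H(0,\cdot)\neq H(1,\cdot)$; you have to pass to period $T>1$, extend $H$ on $[1,T]$ so that $\sup_{[0,T]\times L}|H|\le\|H_{\mid L}\|_{C^0}+\varepsilon$, and then let $\varepsilon\to 0$. Second, the reduction of $\widehat\varphi^{\,1}(\widehat L)$ at the slice $\{s=s_0\}$ is $\varphi_H^{s_0,s_0+1}(L)$, which equals $\varphi_H^1(L)$ only for $s_0=0$; your ``each slice'' claim is false, though a single slice suffices for the reduction inequality. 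With these fixes the argument goes through, but it is considerably heavier than the paper's direct differentiation, and it ultimately rests on the same action-derivative computation that already yields the sharp bound without any detour.
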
 
 \begin{proof} Indeed $c_+(\varphi^t_H(\widetilde L), \widetilde L)$ is the action of a trajectory of $\varphi_H^t$ from $L$ to $L$, i.e.  $$c_+(\varphi_H^t(\widetilde L), \widetilde L)=A_t(\gamma_t)=\int_0^t [p_t(s)\dot q_t(s) - H(s,q_t(s),p_t(s)) ]ds $$
 where $\gamma_t(s)=(q(s),p(s))=\varphi^s_H(q_t(0),p_t(0))$ and $(q_t(0),p_t(0))$ and $(q_t(1),p_t(1)$ belong to $ L$. 
 Then it is classical that $t \mapsto A_t(\gamma_t)$ is continuous and piecewise $C^1$, and for almost all $t\in [0,1]$, 
 $$ \frac{d}{dt} A_t(\gamma_t)=-H(t,q_t(t),p_t(t))$$ so that 
 $$\left \vert \frac{d}{dt} A_t(\gamma_t) \right \vert \leq \Vert H_L \Vert_{C^0}$$ for almost all $t$, hence 
 $$ c_+(\varphi^t_H(\widetilde L), \widetilde L)- c_+(\varphi^0_H(\widetilde L), \widetilde L)=A_t(q_t(t),p_t(t))-A_0(q_0(0),p_0(0))  \leq \vert t \vert \Vert H_L \Vert_{C^0}
 $$
 We have a similar inequality for $c_-$ and the estimate for $\gamma$ immediately follows. 
 \end{proof} 
   \begin{proof}[Proof of Proposition \ref{Prop-Viterbo-conjecture}]
 According to Proposition \ref{Thm-6.3} since $N$ satisfies  condition ($\star$),  there exists $\varphi \in V$ such that  $$\gamma (\tau_\varphi L, L) \geq \frac{\gamma(L)}{2+n}$$ where $\tau_\varphi$ is the lift of some diffeomorphism of the base isotopic to the identity. 

 But $\tau_\varphi (L)$ is obtained by applying to $L$ the flow of a Hamiltonian  $H(t,q,p)=\langle p , X_t(x) \rangle$ in $DT^*N$. 
  Provided Condition ($\star$) holds, we proved in Proposition \ref{Thm-6.3} that there exists $\varphi \in V$ such that $C\gamma(\tau_\varphi L,L) \geq  \gamma(L)$. 
  But $\varphi$ is the time one flow of the vector field $X_t$, so $\tau_\varphi$ is the flow of $H(t,x,p)=\langle p, X_t(x,p)\rangle$. If $C$ bounds the norm of $H$ on $D_1T^*N$,  noting that such a bound exists by compactness of $V$ and only depends on the set $V$, we have 
  $$2C \geq \gamma ( \tau_\varphi L, L) \geq \frac{1}{2+n}\gamma (L)$$ the first inequality is a consequence of  Lemma \ref{Lemma-7.3}, the second of Proposition \ref{Thm-6.3}. 
  
This concludes the proof of our Proposition.  
 \end{proof}

We also have
 \begin{prop} \label{Prop-7.6}
 Let $G/H$ be a homogeneous space associated to compact Lie group $G$. Then $G/H$  satisfies Conjecture \ref{Conjecture-Viterbo}. 
 \end{prop}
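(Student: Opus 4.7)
The plan is to combine Proposition~\ref{Prop-6.4}, which bounds $\gamma(L)$ by a dimensional constant times $\sup_{g\in G}\gamma(L,\tau_g(L))$, with Lemma~\ref{Lemma-7.3}, which in turn bounds $\gamma(L,\tau_g(L))$ by twice the $C^0$-norm on $L$ of a Hamiltonian generating $\tau_g$. Since $N=G/H$ is connected we have $G_0 H=G$, so the identity component $G_0$ already acts transitively on $N$, and I may assume throughout that $G$ is connected. I will equip $G$ with a bi-invariant Riemannian metric (which exists by compactness of $G$) and $N$ with a $G$-invariant Riemannian metric, obtained by averaging any fixed metric. Since on the compact manifold $N$ the $G$-invariant metric is bi-Lipschitz to the original one, replacing the latter by the former only changes the final constant by a bounded factor.

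For each $g\in G$, I choose a minimal geodesic $g_t\colon[0,1]\to G$ with $g_0=e$, $g_1=g$, parametrized so that $|\dot g_t|_G\le D:=\diam(G)<\infty$. The path $\psi_t(x):=g_t\cdot x$ is a smooth isotopy from $\Id$ to $m_g\colon x\mapsto g\cdot x$ on $N$, and its cotangent lift $t\mapsto \tau_{\psi_t}$ is the Hamiltonian flow of
$$H(t,x,p)=\langle p, X_t(x)\rangle,\qquad X_t(y)=\eta(t)\cdot y,\quad \eta(t)=\dot g_t g_t^{-1}\in\mathfrak{g}.$$
Bi-invariance yields $|\eta(t)|_{\mathfrak g}=|\dot g_t|_G\le D$, and $G$-invariance of the metric on $N$ implies that the pointwise norm of the fundamental vector field associated to $\eta\in\mathfrak g$ is bounded by $c\,|\eta|_{\mathfrak g}$ for a constant $c$ depending only on $G,H$ and the chosen metrics. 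Hence $|X_t(x)|\le cD=:K$ uniformly in $g,t,x$, so that $|H(t,x,p)|\le K|p|\le K$ on $DT^*N$. By Lemma~\ref{Lemma-7.3} this gives
$$\gamma(L,\tau_g(L))\le 2K\qquad\text{for every }g\in G.$$

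Feeding this uniform estimate into Proposition~\ref{Prop-6.4} yields
$$\gamma(L)\le 2K(d+1)(n+2)=:C_N,\qquad n=\dim G,\quad d=\dim(G/H),$$
independently of $L\subset DT^*N$, which is exactly the statement of Conjecture~\ref{Conjecture-Viterbo} for $N=G/H$. The one point that requires care is the uniformity of the bound on $|X_t(x)|$: for different $g$'s one chooses different minimal geodesics, and one needs a single constant controlling all of them together with the resulting vector fields on $N$. This uniformity is precisely what the bi-invariance of the metric on $G$ combined with the $G$-invariance of the metric on $N$ provides, and it is really the only nontrivial ingredient; with it in hand, the conclusion follows directly from Proposition~\ref{Prop-6.4} and Lemma~\ref{Lemma-7.3}.
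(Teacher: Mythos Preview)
Your proof is correct and follows exactly the approach indicated in the paper: invoke Proposition~\ref{Prop-6.4} and then argue as in the proof of Proposition~\ref{Prop-Viterbo-conjecture}, using Lemma~\ref{Lemma-7.3} to bound $\gamma(L,\tau_g(L))$ by the $C^0$-norm of a Hamiltonian linear in $p$. The only difference is that the paper obtains the uniform bound on the Hamiltonians simply ``by compactness of $V$'' (here $V=G$), whereas you spell out an explicit mechanism via minimal geodesics for a bi-invariant metric on $G$ and a $G$-invariant metric on $N$; this is a perfectly valid and somewhat more explicit way of realizing the same compactness argument.
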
 
 \begin{proof} 
 Using Proposition \ref{Prop-6.4} the proof is the same as that of Proposition \ref{Prop-Viterbo-conjecture}. 
 \end{proof} 

 \begin{rem} 

A similar result (the case $N$ is a homogeneous space) was proved independently by Guillermou and Vichery (\cite{Guillermou-Vichery}). 
 \end{rem} 
 The result easily extends to more general manifolds by
 \begin{prop} \label{Prop-7.5}
Let $f : P \longrightarrow N$ be a map between closed manifolds such that $f^{*}(\mu_{N})\neq 0$ and assume there is a constant $C_{P}$ such that for $L\in \mathfrak L(T^{*}N)$ contained in $DT^{*}P$ we have $\gamma(L)\leq C_{P}$ (i.e. $P$ satisfies Conjecture \ref{Conjecture-Viterbo}). Then for all $L$ in $DT^*N$, we have
$$\gamma (L)\leq C_P$$ (so $N$ satisfies Conjecture \ref{Conjecture-Viterbo})
 \end{prop}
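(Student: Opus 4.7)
The plan is to pull $L$ back to $T^*P$ via the Lagrangian correspondence $\Lambda_f^{-1}$ and combine the equality $\gamma(\Lambda_f^{-1}L)=\gamma(L)$ from Corollary \ref{Cor-5.5}(\ref{Cor-5.5-ii}) with the hypothesis that $P$ satisfies the conjecture.

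First, perturbing $f$ (or $L$) slightly if needed so that $L$ is non-characteristic for $f$ and the resulting correspondence produces an embedded object, set $L':=\Lambda_f^{-1}L$, so that
$$L'=\{(x,p_y\circ df(x))\in T^*P\mid (f(x),p_y)\in L\}.$$
The assumption $L\subset DT^*N$ means $|p_y|\le 1$, and since $P$ is compact the quantity $r:=\sup_{x\in P}\|df(x)\|$ is a finite constant depending only on $f$ and the chosen metrics; hence $L'\subset D_rT^*P$.

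Second, since $f^*(\mu_N)\neq 0$, Corollary \ref{Cor-5.5}(\ref{Cor-5.5-ii}) gives $\gamma(L')=\gamma(L)$. The fiberwise dilation $\rho_r(q,p)=(q,p/r)$ on $T^*P$ satisfies $\rho_r^*\lambda=\lambda/r$, so it carries $D_rT^*P$ onto $DT^*P$ and multiplies spectral invariants by $1/r$. Consequently $\rho_r(L')\in\mathfrak L(T^*P)$ is contained in $DT^*P$, and the hypothesis on $P$ yields $\gamma(\rho_rL')\le C_P$, i.e.\ $\gamma(L')\le rC_P$. Combining, $\gamma(L)=\gamma(L')\le rC_P$, a constant depending only on $P$, $f$ and the metrics, which is absorbed into the constant for $N$.

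The main technical obstacle is ensuring that $L'$ is embedded so that the hypothesis on $P$, stated for elements of $\mathfrak L(T^*P)$, applies directly. When $f$ is a submersion this is automatic by the remark following Proposition \ref{Prop-4.4}; in the general case one first perturbs $f$ so that $\Lambda_f$ is transverse to $0_P\times\widehat L$ away from the zero section (making $L'$ a smooth Lagrangian), and then further perturbs $L$ by a $C^0$-small Hamiltonian isotopy to arrange embeddedness, using continuity of $\gamma$ to pass to the limit. A minor secondary point is that the resulting constant inherits the factor $r=\|df\|_{C^0}$ from the fiber rescaling, but this is a fixed datum of $(P,N,f,g)$ and therefore harmless.
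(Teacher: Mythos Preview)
Your proof is correct and follows the same line as the paper's: pull $L$ back to $L'=\Lambda_f^{-1}L\subset T^*P$, invoke the hypothesis on $P$ to bound $\gamma(L')$, and use Corollary \ref{Cor-5.5}(\ref{Cor-5.5-ii}) to conclude $\gamma(L)=\gamma(L')$. The only difference is cosmetic: the paper chooses the metric on $P$ so that $f$ is a contraction (hence $L'\subset DT^*P$ directly, giving the constant $C_P$ on the nose), while you keep the metrics fixed and rescale fibers by $r=\|df\|_{C^0}$, obtaining $rC_P$; since $C_P$ depends on the metric anyway, these are equivalent. Your discussion of the embeddedness of $L'$ is in fact more careful than the paper's, which writes $\pi$ for $f$ and simply asserts that $L'$ is an exact Lagrangian.
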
 
 \begin{proof} 
  Indeed, 
 if $L$ is exact Lagrangian in $T^*N$, we can lift $L$ to an exact Lagrangian $L'$ in $T^*P$ by setting
$$L'=\{(y,p_y) \mid p_y=p_x\circ d\pi, (\pi(y), p_x)\in L\}=\Lambda_{\pi}^{-1} L$$  
Note that we may assume by compactness that we have a Riemannian metric on $P$ such that the metric on $N$ is dominated by the metric on $P$ (i.e. $\pi$ is a contraction), then 
if $L \subset DT^*N$ we have $L' \subset DT^*P$. As a result we have $\gamma (L') \leq C_P$ but by  Corollary \ref{Cor-5.5} (\ref{Cor-5.5-ii})  we
know that $\gamma (L)= \gamma(L')$, so $\gamma(L)\leq C_{P}$. 
\end{proof}

To summarize our findings, we set
  \begin{defn} \label{Def-6.5}
  We denote by $\mathcal V$ the class of closed manifolds 
  \begin{enumerate} 
  \item \label{Def-6.5-i}Containing all manifolds satisfying Condition $(\star)$, and in particular all compact Lie groups. 
  \item  \label{Def-6.5-ii} Containing all homogeneous spaces  of compact Lie groups. 
  \item \label{Def-6.5-iii} If $P\in \mathcal V$ and $f: P \longrightarrow N$ is a map such that $f^{*}\mu_{N}\neq 0$ in $H^{*}(P)$ then  $N \in \mathcal V$. 
  \end{enumerate} 
  \end{defn} 
\begin{rems}
\begin{enumerate} 
\item If $f: P \longrightarrow N$ is a non-constant holomorphic map between K\"ahler manifolds, the condition of the Corollary holds, since according to Blanchard and Deligne (see \cite{Blanchard, Deligne-degeneration} the Leray spectral sequence degenerates at $E_{2}$. This implies that $E_{2}^{n,0}=H^{n}(N, R^{0}f_{*}( {\mathbb R}_{P}))$ survives to $H^{n}(P, {\mathbb R})$, i.e. that $f^{*}(\mu_{N})\neq 0$. 
 \item We do not really need the full power of Condition ($\star$), it is enough to have some  map $f: P \longrightarrow N$ and  a map $\Phi: V \longrightarrow \Diff (P)$ such that the map $\Psi_{y_0}: \varphi \mapsto f(\varphi(y_0))$ satisfies $\Psi_{y_0}^*\mu_N \neq 0$. 
\item Note the following strange phenomenon : according to Shelukhin (see  \cite{shelukhin-sc-viterbo}) the condition for a manifold to  be ``string-point invertible'' implies that Conjecture \ref{Conjecture-Viterbo} holds in $T^{*}N$. Now the property of being ``string point invertible'' is stable by taking products. This also holds for the class $\mathcal V$ since 
\begin{enumerate} 
\item The product of two manifolds satisfying Condition $(\star)$ satisfies Condition  $(\star)$
\item Products of homogeneous spaces are homogeneous spaces
\item If $P_1,P_2 \in \mathcal V$ and $f_{j}: P_{j } \longrightarrow N_{j}$ are maps such that $f_{j}^{*}(\mu_{N_{j}})\neq 0$ then  $f_{1}\times f_{2}: P_{1}\times P_{2} \longrightarrow N_{1}\times N_{2}$ satisfies $(f_{1}\times f_{2})^{*}(\mu_{N_{1}\times N_{2}})= f_{1}^{*}(\mu_{N_{1}})\otimes  f_{2}^{*}(\mu_{N_{2}})\neq 0$
\end{enumerate} 
Moreover  if $N_{1}\times N_{2}$ satisfies Conjecture \ref{Conjecture-Viterbo},  then according to Corollary \ref{Cor-5.5}, (\ref{Cor-5.5-ii}) this is also the case for each of the factors $N_{1}$ and $N_{2}$.
\item According to work by Pedro Ontaneda (\cite{Ontaneda, Ontaneda-2}), for any manifold $N$ there is always a non-zero degree map from a negatively curved manifold (and even negative curvature arbitrarily close to $-1$) to $N$. As a result, if we knew that Conjecture \ref{Conjecture-Viterbo} holds for negatively curved closed manifolds it would hold for all manifolds. Strangely enough, as far as we know the conjecture is not known for ANY negatively curved manifold.  So this is where one should look for counterexamples !  
 \end{enumerate} 
\end{rems} 
 
 We have then proved
 \begin{thm}\label{Thm-viterbo-conjecture}
 For any manifold $N$ in $\mathcal V$, there is a constant $C_N$ such that for any element $L\in \mathcal L(T^*N)$ contained in $DT^*N$, we have
 $$\gamma (L) \leq C_N$$
 \end{thm}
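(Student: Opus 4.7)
The plan is to prove the theorem by structural induction on the class $\mathcal{V}$, treating each of the three generation rules in Definition \ref{Def-6.5} as a step that preserves the property "there exists a constant $C_N$ such that $\gamma(L) \leq C_N$ for every $L \in \mathcal{L}(T^*N)$ with $L \subset DT^*N$."

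First I would handle the base cases. For a manifold $N$ satisfying Condition $(\star)$ (case \ref{Def-6.5-i}), the conclusion is exactly Proposition \ref{Prop-Viterbo-conjecture}: one picks an element $\varphi \in V$ provided by Theorem \ref{Thm-6.3}, writes $\tau_\varphi$ as the time-one flow of the Hamiltonian $H(t,q,p)=\langle p, X_t(q)\rangle$, uses compactness of $V$ to bound $\|H\|_{C^0}$ on $DT^*N$ by a constant $C$ depending only on $N$ and the chosen $\Phi : V \to \Diff_0(N)$, and then combines Lemma \ref{Lemma-7.3} with Theorem \ref{Thm-6.3} to get $2C \geq \gamma(\tau_\varphi L, L) \geq \gamma(L)/(n+2)$. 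For $N = G/H$ a homogeneous space of a compact Lie group (case \ref{Def-6.5-ii}), the same procedure now with Proposition \ref{Prop-6.4} in place of Theorem \ref{Thm-6.3} gives the bound; this is exactly Proposition \ref{Prop-7.6}.

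The inductive step is case \ref{Def-6.5-iii}: suppose $P \in \mathcal{V}$ already carries a constant $C_P$, and let $f : P \longrightarrow N$ satisfy $f^*(\mu_N) \neq 0$. I would invoke Proposition \ref{Prop-7.5}: choosing a Riemannian metric on $P$ so that $f$ is a contraction (possible by compactness), any $L \in \mathcal{L}(T^*N)$ with $L \subset DT^*N$ lifts under the Lagrangian correspondence $\Lambda_f$ to $L' := \Lambda_f^{-1} L \in \mathcal{L}(T^*P)$ contained in $DT^*P$. The hypothesis $f^*(\mu_N) \neq 0$ lets us apply Corollary \ref{Cor-5.5} (\ref{Cor-5.5-ii}) to get $\gamma(L') = \gamma(L)$, and the induction hypothesis for $P$ then yields $\gamma(L) \leq C_P$. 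So $N$ satisfies the conjecture with the same constant $C_P$.

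There is no real obstacle here: every element of $\mathcal{V}$ is obtained from Lie groups and homogeneous spaces by finitely many applications of rule \ref{Def-6.5-iii}, so well-founded induction on the number of such applications terminates and produces the required constant $C_N$ at each stage. The only point worth emphasizing is that the constant produced may not be intrinsic to $N$ alone---it depends on the chain of maps witnessing $N \in \mathcal{V}$ (and on the metric on the top-level $P$ that realizes the contraction)---but this is harmless for the statement, which only asserts the existence of \emph{some} $C_N$.
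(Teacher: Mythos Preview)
Your proposal is correct and follows essentially the same approach as the paper: the paper's proof simply invokes Proposition \ref{Prop-Viterbo-conjecture} for case (\ref{Def-6.5-i}), Proposition \ref{Prop-7.6} for case (\ref{Def-6.5-ii}), and Proposition \ref{Prop-7.5} for the inductive step (\ref{Def-6.5-iii}), exactly as you do. Your framing via structural induction and your remark on the dependence of $C_N$ on the witnessing chain are welcome clarifications but do not change the substance.
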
 
\begin{proof} 
We proved the estimate when $N$ is given by \ref{Def-6.5} (\ref{Def-6.5-i}). Case \ref{Def-6.5-iii} is a consequence of Proposition \ref{Prop-7.6}.
In case (\ref{Def-6.5-iii})  we 
apply Proposition \ref{Prop-7.5}.
\end{proof} 
We remind the reader of the following generalization of Conjecture \ref{Conjecture-Viterbo} from  \cite{Viterbo-gammas} (we refer to it for all definitions), where $\widehat{\mathfrak L}(T^*N)$ is the completion of 
$({\mathfrak L}(T^*N),\gamma)$) (the Humili\`ere completion) and the support of $L \in \widehat{\mathfrak L}(T^*N)$ has been defined in \cite{Viterbo-gammas} : 
\begin{conjecture}\label{Conjecture-6.8}
 There exists a constant $C_N$ such that for any 
 $L \in \widehat{\mathfrak L}(T^*N)$ with $\gamma-\supp(L) \subset DT^*N$, we have $$\gamma (L) \leq C_N$$
 \end{conjecture}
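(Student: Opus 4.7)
The plan is to proceed case-by-case according to the three clauses defining the class $\mathcal V$ in Definition \ref{Def-6.5}, using the earlier propositions of this section as building blocks. Since $\mathcal V$ is defined recursively (clause \ref{Def-6.5-iii} is the only one that enlarges the class from already-known members), the argument is essentially an induction on the construction of $N$ as a member of $\mathcal V$.

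For the base cases, I would first handle a manifold $N$ satisfying Condition ($\star$) by invoking Proposition \ref{Prop-Viterbo-conjecture} directly: it provides a constant $C_{N,g}$ (depending on a fixed metric $g$) bounding $\gamma(L)$ for every $L\in \mathcal L(T^{*}N)$ contained in $DT^{*}N$. Next, if $N = G/H$ is a homogeneous space of a compact Lie group, I would appeal to Proposition \ref{Prop-7.6}, whose proof combines Proposition \ref{Prop-6.4} (which replaces Theorem \ref{Thm-6.3} in the homogeneous setting) with the same $C^{0}$-estimate on the generating Hamiltonian $H(t,q,p)=\langle p, X_{t}(x)\rangle$ coming from Lemma \ref{Lemma-7.3}.

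For the inductive step, suppose $N$ enters $\mathcal V$ through clause \ref{Def-6.5-iii}, i.e. there exists $P\in \mathcal V$ and a map $f\colon P \longrightarrow N$ with $f^{*}(\mu_{N})\neq 0$. By the inductive hypothesis, a bound $\gamma(L')\leq C_{P}$ holds for every $L'\in \mathcal L(T^{*}P)$ contained in $DT^{*}P$. Then Proposition \ref{Prop-7.5} applies verbatim: lift any $L\subset DT^{*}N$ to $L'=\Lambda_{f}^{-1}L\subset T^{*}P$, which is exact (and embedded after a generic perturbation making $f$ non-characteristic), observe that after choosing a metric on $P$ dominating the one on $N$ (so that $f$ is a contraction) one has $L'\subset DT^{*}P$, and finally use Corollary \ref{Cor-5.5} (\ref{Cor-5.5-ii}) with the hypothesis $f^{*}(\mu_{N})\neq 0$ to conclude $\gamma(L)=\gamma(L')\leq C_{P}$, so $C_{N}:=C_{P}$ works.

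The structural hard point is not in the inductive step but in the base case: Proposition \ref{Prop-Viterbo-conjecture} itself, which crucially uses the inverse reduction inequality of Theorem \ref{Thm-Reverse-reduction-inequality} together with Theorem \ref{Thm-6.3} to show that a diffeomorphism coming from the family $V$ in Condition ($\star$) displaces $L$ enough to detect $\gamma(L)$, while the displacement itself can be bounded in $C^{0}$-norm on $DT^{*}N$ by a constant depending only on $V$. Once this is in hand, the remaining work is bookkeeping: one must simply verify that each of the three generating rules for membership in $\mathcal V$ is covered by one of the three propositions \ref{Prop-Viterbo-conjecture}, \ref{Prop-7.6}, \ref{Prop-7.5}, and then assemble them into a single inductive statement to obtain the uniform bound $C_{N}$ announced in the theorem.
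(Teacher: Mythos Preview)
You have proved the wrong statement. The statement in question is Conjecture~\ref{Conjecture-6.8}, which concerns elements $L$ of the \emph{completion} $\widehat{\mathfrak L}(T^{*}N)$ subject to the condition $\gamma\text{-}\supp(L)\subset DT^{*}N$. Your argument treats only genuine exact Lagrangians $L\in\mathcal L(T^{*}N)$ geometrically contained in $DT^{*}N$; what you have written is, almost word for word, the paper's proof of Theorem~\ref{Thm-viterbo-conjecture}, not anything about Conjecture~\ref{Conjecture-6.8}. Note also that Conjecture~\ref{Conjecture-6.8} is stated for \emph{all} closed $N$, with no reference to the class $\mathcal V$; the paper does not prove it in this generality and leaves it as a conjecture.

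The gap is substantive, not cosmetic. An element of $\widehat{\mathfrak L}(T^{*}N)$ with $\gamma\text{-}\supp(L)\subset DT^{*}N$ need not be approximable by a sequence of honest Lagrangians lying in $DT^{*}N$, so one cannot simply pass to the limit from Theorem~\ref{Thm-viterbo-conjecture}. The paper's partial result in this direction is Proposition~\ref{Prop-7.11} (valid for $N\in\mathcal V$), and its proof requires ingredients you do not invoke: Lemma~8.9 of \cite{Viterbo-gammas}, which controls $c_{\pm}(\varphi_{H}(\widetilde L),\widetilde L)$ for $L$ in the completion when $H$ is constant near $\gamma\text{-}\supp(L)$, replacing Lemma~\ref{Lemma-7.3}; and Proposition~8.17 of \cite{Viterbo-gammas}, giving $\gamma\text{-}\supp(\Lambda_{\pi}^{-1}L)\subset \Lambda_{\pi}^{-1}(\gamma\text{-}\supp(L))$, replacing the elementary geometric containment used in Proposition~\ref{Prop-7.5}. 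Without these, none of Propositions~\ref{Prop-Viterbo-conjecture}, \ref{Prop-7.6}, \ref{Prop-7.5} applies to elements of the completion, and your induction does not get off the ground for the statement actually asked.
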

 We may thus prove
 \begin{prop} \label{Prop-7.11}
 Let $N \in \mathcal V$. Then there exists a constant $C_N$ such that  for any $L \in \widehat{\mathfrak L}(T^*N)$ such that $\gamma-\supp(L) \subset DT^*N$, we have $$\gamma (L,0_N) \leq C_N$$
 \end{prop}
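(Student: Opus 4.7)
The plan is to deduce Proposition \ref{Prop-7.11} from Theorem \ref{Thm-viterbo-conjecture} by an approximation argument inside the Humili\`ere completion. Although $L\in\widehat{\mathfrak L}(T^*N)$ is in general not a genuine embedded Lagrangian, the assumption $\gamma\text{-}\supp(L)\subset DT^*N$ will let us approximate it in $\gamma$ by honest exact Lagrangians lying in slightly larger disc bundles $D_{1+\eps}T^*N$, on which Theorem \ref{Thm-viterbo-conjecture} (applied after rescaling the metric) gives a uniform bound.

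Concretely, I would proceed as follows. Fix $\eps>0$. Using the structural properties of the $\gamma$-support developed in \cite{Viterbo-gammas}, namely that an element of the completion whose $\gamma$-support is contained in an open set $U$ is a $\gamma$-limit of genuine elements of $\mathcal L(T^*N)$ contained in $U$, one produces $L_\eps\in\mathcal L(T^*N)$ with $L_\eps\subset D_{1+\eps}T^*N$ and $\gamma(L,L_\eps)<\eps$. Replacing the metric $g$ by $(1+\eps)^{-2}g$ turns $D_{1+\eps}T^*N$ into the unit disc bundle for the rescaled metric, and since the class $\mathcal V$ depends only on $N$, Theorem \ref{Thm-viterbo-conjecture} applies to $L_\eps$ and yields $\gamma(L_\eps,0_N)\leq C_{N,(1+\eps)^{-2}g}$. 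Inspection of the proofs of Propositions \ref{Prop-Viterbo-conjecture} and \ref{Prop-6.4}, together with Lemma \ref{Lemma-7.3}, shows that the constant is controlled by the $C^0$-norm, over the disc bundle, of Hamiltonians of the form $H(t,x,p)=\langle p,X_t(x)\rangle$; since such $H$ is linear in $p$, this norm scales linearly in the radius, so that $C_{N,(1+\eps)^{-2}g}\leq (1+\eps)\,C_{N,g}$.

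The conclusion then follows from the triangle inequality, which extends to the completion:
\[
\gamma(L,0_N)\;\leq\;\gamma(L,L_\eps)+\gamma(L_\eps,0_N)\;\leq\;\eps+(1+\eps)\,C_{N,g}.
\]
Letting $\eps\to 0$ gives $\gamma(L,0_N)\leq C_{N,g}$, which is the desired bound with $C_N:=C_{N,g}$.

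The main obstacle is precisely the approximation step: one needs to know that an element of $\widehat{\mathfrak L}(T^*N)$ whose $\gamma$-support lies in an open set $U$ is a $\gamma$-limit of elements of $\mathcal L(T^*N)$ contained in $U$ (or at least in any prescribed neighborhood of $U$). This is exactly the place where the hypothesis on the $\gamma$-support is consumed, and it is what permits replacing the a priori unbounded approximating sequence by one staying in the prescribed disc bundle. Granted this input from \cite{Viterbo-gammas}, everything else is a soft consequence of Theorem \ref{Thm-viterbo-conjecture}, the linear scaling of the constant in the radius, and the continuity of $\gamma$ on the completion.
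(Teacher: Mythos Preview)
Your argument hinges entirely on the approximation step, and that step is the problem. The claim that an element of $\widehat{\mathfrak L}(T^*N)$ with $\gamma$-support in an open set $U$ can be $\gamma$-approximated by \emph{genuine} exact Lagrangians contained in $U$ (or in any prescribed neighbourhood of $U$) is not a soft consequence of the definition of $\gamma$-support, and it is not established in \cite{Viterbo-gammas}. By definition $L$ is a $\gamma$-limit of some $L_j\in\mathcal L(T^*N)$, but nothing prevents these $L_j$ from wandering arbitrarily far in the fibre direction; the $\gamma$-support only tells you that Hamiltonians supported away from $DT^*N$ fix $L$ in the completion, which lets you replace $L_j$ by $\varphi_H(L_j)$ for such $H$, but gives no mechanism to push $L_j$ into $D_{1+\eps}T^*N$. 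You flag this as ``the main obstacle'' and then assume it away; without it, the rest of your argument does not start.

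The paper avoids this issue completely and proceeds differently. The inequality of Theorem~\ref{Thm-6.3},
\[
\gamma(L)\;\leq\;(n+2)\,\sup_{\varphi\in V}\gamma(\tau_\varphi L,L),
\]
is an inequality between $\gamma$-distances and $\tau_\varphi$ is a $\gamma$-isometry, so it passes to the completion by plain density (no support control on the approximants is needed). The hypothesis $\gamma\text{-}\supp(L)\subset DT^*N$ is then consumed \emph{only} to bound the right-hand side: by Lemma~8.9 of \cite{Viterbo-gammas}, if $H$ equals a constant on a neighbourhood of $\gamma\text{-}\supp(L)$ then $c_\pm(\varphi_H(\widetilde L),\widetilde L)$ equals that constant, hence $\gamma(\tau_\varphi L,L)\leq 2\sup_{DT^*N}|H|$ for the linear Hamiltonians $H(t,x,p)=\langle p,X_t(x)\rangle$ generating $\tau_\varphi$. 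Combining the two gives the bound directly, and the fibration case is handled via Proposition~8.17 of \cite{Viterbo-gammas} for the behaviour of $\gamma$-support under $\Lambda_\pi^{-1}$. In short: the paper extends the \emph{inequality} to the completion, not the Lagrangians.
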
 
\begin{proof} 
Let us first remark that according to Lemma 8.9 of \cite{Viterbo-gammas} if $H$ is a Hamiltonian equal to $a$ in a neighbourhood of $\gamma-\supp(\widetilde L)$ then $c_\pm(\varphi_H(\widetilde  L), \widetilde L)=a$. As a consequence  if $\gamma-\supp(L) \subset DT^*N$ and $ -C\leq H \leq C$ on $DT^*N$ we have $c_+ (\tau_{\varphi_H} (\widetilde L), \widetilde L) \leq C$ and $c_- (\tau_{\varphi_H} (\widetilde L), \widetilde L) \geq -C$, so $\gamma (\tau_{\varphi_H} (L), L) \leq 2C$. 

 Now according to Theorem \ref{Thm-6.3} we have $$\gamma(L) \leq C'\sup \{\gamma (\tau_\varphi (L), L) \mid \varphi \in V \}$$
 So if $C$ is the largest norm of $\Vert H \Vert _{C^0}$ where $H$ belongs to a set of Hamiltonians such that the $\varphi_H$ describe $V$, we get $$\gamma (L) \leq 2C'C$$
 This proves the proposition for $N$ satisfying $(\star)$. Now assume we have a fibration $\pi: P \longrightarrow N$ and $P$ satisfies the Conjecture. Then for $L\in \widehat{\mathfrak L}(T^*N)$ we get 
 $\Lambda_\pi^{-1}(L) \in \widehat{\mathfrak L}(T^*N)$ satisfies  $\gamma-\supp(\Lambda_\pi^{-1}(L))\subset \Lambda_\pi^{-1}(\gamma-\supp(L))  \subset DT^*P$. The first inclusion follows  from Proposition 8.17 in \cite{Viterbo-gammas}, while the second holds provided the projection is a contraction, which we can always assume since $P,N$ are compact. As a result there is some constant $C_P$ such that  $\gamma (\Lambda_\pi^{-1}(L))\leq C_{P}$ but since  according to  Corollary \ref{Cor-5.5} we have $\gamma (\Lambda_\pi^{-1}(L))=\gamma(L)$ (since this is true for $L\in \mathfrak{L}(T^{*}N)$ it will hold in the completion 
 $\widehat{\mathfrak{L}}(T^{*}N)$ this yields a bound on $\gamma (L)$. 
\end{proof} 
 
 \begin{Question}
 What are the values of the best constants in the inequalities of Proposition \ref{Thm-6.3} and Proposition  \ref{Prop-7.11}, 
 \end{Question}
\section{The non-compact case}
 If $N$ is open, we can prove the following easy result (the idea being similar to the $ \varepsilon $-shift trick from \cite{Viterbo-STAGGF},  proposition 4.15, see also \cite{Seyfaddini-surfaces})
\begin{prop}\label{Prop-8.1}  
 Let $N$ be diffeomorphic to $M\times {\mathbb R} $ where $M$ is a closed manifold. There is a constant $C_{N}$ such that for $L\in \mathcal L (T^{*}N)$ satisfying the conditions
  \begin{enumerate} 
 \item  $L=0_{N}$ outside of $T^{*}(M\times [-a,a] )$ 
 \item  $L \subset DT^{*}N$
 \end{enumerate} 
 then  $$ \gamma (L) \leq C_{N}a $$ 
 \end{prop}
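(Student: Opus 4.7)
The plan is to implement the $\varepsilon$-shift trick from \cite{Viterbo-STAGGF}, Proposition 4.15, using the $\mathbb R$-factor of $N = M \times \mathbb R$. The key observation is that on $DT^{*}N$, translation in the $\mathbb R$-direction is implemented by a Hamiltonian of small pointwise norm: since $|p_t| \leq 1$ on $DT^{*}N$, a translation by $3a$ in the $t$-direction has Hofer cost $O(a)$. This will let me displace the ``nontrivial part'' of $L$ (the locus where $L$ differs from $0_N$) from itself, at cost $O(a)$.

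First I would choose a cutoff $\chi \in C^\infty_c(T^{*}N)$ equal to $1$ on $\overline{D_{2}T^{*}N} \cap T^{*}(M \times [-2a, 5a])$ and supported in a slightly larger compact set, and set $H := \chi \cdot p_t$. On $\{\chi = 1\}$ the Hamiltonian flow of $H$ is the translation $(q, t, p_q, p_t) \mapsto (q, t+s, p_q, p_t)$, and since $L \subset DT^{*}N$ we have $\|H_{|L}\|_{C^{0}} \leq 1$. Writing $\Phi := \varphi_H^{3a}$ and $\widetilde L := \Phi(L)$, we see that $\widetilde L$ coincides with $0_N$ outside $T^{*}(M \times [2a, 4a])$, and rescaling time to $[0,1]$ and applying Lemma \ref{Lemma-7.3} yields
$$\gamma(L, \widetilde L) \leq 2 \cdot 3a \cdot \|H_{|L}\|_{C^{0}} \leq 6a.$$

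Next I would establish a ``decoupling'' inequality $\gamma(L, \widetilde L) \geq 2 \gamma(L)$, which combined with the above gives $\gamma(L) \leq 3a$. Morally this holds because the nontrivial parts of $L$ and $\widetilde L$ live in the disjoint strips $M \times [-a, a]$ and $M \times [2a, 4a]$, while on the complement both coincide with $0_N$. Formally one computes $c_\pm(L, \widetilde L)$ from the sheaf quantizations: outside the two essential regions both $\mathcal{F}_L$ and $\mathcal{F}_{\widetilde L}$ agree with $\mathcal{F}_{0_N}$, so $R\hHom^{\cstar}(\mathcal{F}_L, \mathcal{F}_{\widetilde L})$ simplifies there, and a Mayer--Vietoris argument on a cover of $N \times \mathbb R_\sigma$ adapted to the two strips (in the spirit of the proof of Theorem \ref{Thm-Reverse-reduction-inequality}) expresses $c_+(L, \widetilde L)$ and $c_-(L, \widetilde L)$ as independent contributions from each strip, with the contribution of the first strip equal to $c_\pm(L, 0_N)$ and that of the second to $c_\pm(0_N, L) = -c_\mp(L, 0_N)$, so that $\gamma(L, \widetilde L) = 2\gamma(L)$.

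The main obstacle is this decoupling step: one must carefully justify that the spectral invariants of $(L, \widetilde L)$ genuinely split as the sum of the contributions from the two disjoint strips, despite the fact that $L \cap \widetilde L$ contains the zero section throughout the complement. The argument uses the sheaf-theoretic description of $c_\pm$ via the pseudo-Tamarkin category together with a Mayer--Vietoris sequence as in Section \ref{Section-7}, and is parallel to (in fact simpler than) the covering argument used in the proof of the inverse reduction inequality.
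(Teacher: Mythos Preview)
Your overall strategy is exactly the paper's: translate $L$ in the $\mathbb R$-direction to disjoin its nontrivial part from that of the image, bound $\gamma(L,\widetilde L)$ linearly in $a$ via Lemma~\ref{Lemma-7.3}, and then bound $\gamma(L,\widetilde L)$ from below in terms of $\gamma(L)$ using the disjointness.

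There is, however, a genuine error in your decoupling step. You claim $\gamma(L,\widetilde L)=2\gamma(L)$, but the contributions from the two disjoint strips combine by $\max$/$\min$, not by sum. Concretely, take $M$ a point, $L=\mathrm{gr}(df)$ with $f$ compactly supported, $\max f=2$, $\min f=-1$: then $c_+(L,\widetilde L)=\max(f-f\circ\tau_{-3a})=\max(2,1)=2$ and $c_-(L,\widetilde L)=-2$, so $\gamma(L,\widetilde L)=4$ while $2\gamma(L)=6$. The correct general identity is
\[
\gamma(L,\widetilde L)=2\max\bigl(c_+(L),-c_-(L)\bigr)\in[\gamma(L),\,2\gamma(L)].
\]
The inequality you actually need, $\gamma(L,\widetilde L)\ge\gamma(L)$, does hold and is all the paper uses; with your shift by $3a$ it yields $\gamma(L)\le 6a$ (the paper shifts by $2a$ and obtains $\gamma(L)\le 2a$).

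Two further remarks. First, the cutoff $\chi$ is unnecessary: Lemma~\ref{Lemma-7.3} only requires $\|H_{|L}\|_{C^0}<\infty$, and for the global translation $\tau_s$ (Hamiltonian $H=p_t$, lifting $\partial/\partial t$ on the base) this holds because $L\subset DT^*N$. Second, the paper's decoupling is simpler than a Mayer--Vietoris computation on sheaves: for $s\ge 2a$ the set $L\cap\tau_s(L)$ and the actions at its points are independent of $s$, so the filtered Floer complex $FH^*(L,\tau_s(L);t)$ is constant in $s$ and, letting $s\to+\infty$, splits as $FH^*(L,0_N)\oplus FH^*(0_N,L)$, from which $c_\pm(L,\tau_s(L))$ are read off directly.
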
 
 \begin{proof} 
 Let $\tau_{s}$ be the translation by $s$ in the $ {\mathbb R} $ direction of $N$. Then $L\cap \tau_{s}L \subset 0_{N}$ for $s\geq 2a$ and more precisely the action of any intersection point will be independent from $s$. As a result $FH^{*}(L, \tau_{s}(L))$ is independent from $s$ for $s\geq 2a$ and as $s$ goes to $+\infty$ is equal to 
 $FH^{*}(L,0_{N})\oplus FH^{*}(0_{N},L)$. As a result, for $s\geq 2a$, we have $c_{+}(L,\tau_{s}(L))=c_{+}(L), c_{-}(L,\tau_{s}(L))=c_{-}(L)$. But $\tau_{s}$ is generated by the Hamiltonian $H(x,p)=s\langle p, X(x,p)\rangle$ where $X$ is the vector field $ \frac{\partial}{\partial t}$ on ${\mathbb R} $, thus $H$ is bounded by some constant $C_{N}$ on $L$ (since $ \vert p \vert \leq 1$ on $L$) and according to Lemma \ref{Lemma-7.3} we  have $$\gamma (L, \tau_{s}(L))\leq \gamma (\tau_{s}) \leq \vert s \vert $$
 Thus on one hand $\gamma(L, \tau_{2a}(L)) \leq 2a$  on the other hand $\gamma (L, \tau_{2a}(L)) =c_{+}(L)-c_{-}(L)=\gamma(L)$. As a result
 $$\gamma(L)\leq 2a$$
 \end{proof} 

\section{The spectral distance and locally path-connectedness}
 The spectral distance on $\DHam(M)$ is not - a priori- locally path-connected. Indeed, we do not know whether $\gamma (\varphi) \leq \delta $ implies the existence of a path $\varphi^t$ from $\Id$ to $\varphi$ such that $\gamma (\varphi^t) < \varepsilon (\delta)$ where $ \varepsilon : [0, +\infty[ \longrightarrow [0,+\infty[$ is continuous and vanishes at $0$. One may thus define a different distance
\begin{defn} 
We define the distance $\widetilde \gamma$ on $\DHam (M, \omega)$ as the bi-invariant metric such that
$$\widetilde\gamma(\varphi, \Id)= \widetilde \gamma(\varphi)=\inf\left \{ \sup \left \{ \gamma (\varphi^t) \mid t\in [0,1]\right \}\mid  \varphi^1=\varphi, \varphi^0=\Id\right \}$$
\end{defn} 
This new distance, similarly to the Hofer distance (see \cite{Hofer-distance}), is locally path connected. 
Note that in case $(M, \omega)$ is a Liouville domain\footnote{In principle a Liouville domain is compact. Whenever we say ''compact supported' in a Liouville domain, we mean "compact supported in $M\setminus \partial M)$ !} , we have
\begin{prop} 
If $(M, \omega)$  is a Liouville domain, then for any constant $C$,   the following spaces are path-connected
 \begin{gather*}
  \left \{ \varphi \in \DHam (M, \omega) \mid \gamma (\varphi) \leq C \right \}, 
\left \{ \varphi \in \DHam_c (M, \omega) \mid \gamma (\varphi) \leq C \right \}\\
 \left \{ \varphi \in \widehat{\DHam} (M, \omega) \mid \gamma (\varphi) \leq C \right \}, 
\left \{ \varphi \in \widehat{\DHam}_c (M, \omega) \mid \gamma (\varphi) \leq C \right \}
\end{gather*} 
As a consequence $\DHam (M, \omega), \DHam_{c} (M, \omega), \widehat{\DHam} (M, \omega), \widehat{\DHam}_{c}(M, \omega)$ are locally path-connected so that for $(M, \omega)$ Liouville, we have $\gamma= \widetilde \gamma$.
\end{prop}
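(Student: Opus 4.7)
The plan is to prove path-connectedness of each sublevel set $\{\varphi \mid \gamma(\varphi) \leq C\}$ by exhibiting, for an arbitrary $\varphi$ in the set, a continuous path from $\varphi$ to $\Id$ along which $\gamma$ is bounded by $\gamma(\varphi)$ throughout. Arbitrary elements are then joined through $\Id$; moreover the bound yields $\widetilde{\gamma}(\varphi) \leq \gamma(\varphi)$ and, combined with the trivial reverse inequality $\gamma \leq \widetilde{\gamma}$, gives $\gamma = \widetilde{\gamma}$ and local path-connectedness at once.

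The tool is the Liouville rescaling trick. On a Liouville domain the Liouville vector field $Z$ integrates to a flow $\rho_s$, globally defined on $M \setminus \partial M$ for $s \leq 0$, satisfying $\rho_s^*\omega = e^s\omega$ and $\rho_s^*\lambda = e^s\lambda$. A short computation shows $\rho_s \circ \varphi_H^t \circ \rho_{-s} = \varphi_{K_s}^t$ with $K_s(t,x) = e^s H(t, \rho_{-s}(x))$, and every action value is multiplied by $e^s$, so that for every $\varphi \in \DHam(M,\omega)$
\begin{equation*}
\gamma(\rho_s \varphi \rho_{-s}) = e^s \gamma(\varphi).
\end{equation*}
For $s \leq 0$, $\rho_s$ contracts the interior into itself, so the conjugation preserves $\DHam_c$, and by $\gamma$-continuity the identity extends to $\widehat{\DHam}$ and $\widehat{\DHam}_c$.

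Now fix $\varphi$ with $\gamma(\varphi) \leq C$, choose any Hamiltonian isotopy $\{\varphi^t\}_{t \in [0,1]}$ with $\varphi^0 = \Id$ and $\varphi^1 = \varphi$, set $M_0 = \sup_{t}\gamma(\varphi^t)$, and pick $S > 0$ so large that $e^{-S} M_0 \leq \gamma(\varphi)$. Define
\begin{equation*}
\Psi_\sigma = \begin{cases} \rho_{-2\sigma S}\, \varphi\, \rho_{2\sigma S}, & \sigma \in [0, 1/2],\\ \rho_{-S}\, \varphi^{2(1-\sigma)}\, \rho_{S}, & \sigma \in [1/2, 1], \end{cases}
\end{equation*}
which at $\sigma = 1/2$ reads $\rho_{-S}\varphi\rho_S$ from both sides, with $\Psi_0 = \varphi$ and $\Psi_1 = \Id$. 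The scaling identity gives $\gamma(\Psi_\sigma) = e^{-2\sigma S}\gamma(\varphi) \leq \gamma(\varphi)$ on the first segment and $\gamma(\Psi_\sigma) = e^{-S}\gamma(\varphi^{2(1-\sigma)}) \leq e^{-S} M_0 \leq \gamma(\varphi)$ on the second, so the whole path lies in $\{\gamma \leq \gamma(\varphi)\} \subseteq \{\gamma \leq C\}$. For $\widehat{\DHam}$ and $\widehat{\DHam}_c$ one approximates $\varphi$ by a $\gamma$-Cauchy sequence in $\DHam$ (resp.\ $\DHam_c$) and uses the conformality of the conjugation to pass to the limit.

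The only genuinely delicate step is the scaling formula $\gamma(\rho_s\varphi\rho_{-s}) = e^s\gamma(\varphi)$: one must verify it by a direct action computation using $\rho_s^*\lambda = e^s\lambda$, check that for $s \leq 0$ the conjugation preserves compact support in $M \setminus \partial M$, and extend it to the Humili\`ere completions. Once this is in place, the remainder---path-connectedness of the sublevel sets, local path-connectedness, and $\gamma = \widetilde{\gamma}$---is bookkeeping.
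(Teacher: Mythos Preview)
Your proof is correct and follows essentially the same approach as the paper: both exploit the conformality $\gamma(\rho_s\varphi\rho_{-s})=e^{s}\gamma(\varphi)$ under Liouville conjugation to shrink an arbitrary connecting isotopy into the $\gamma$-ball. The only cosmetic difference is that the paper uses a single path $\psi^t=\rho^{c(t)}\varphi^t\rho^{-c(t)}$ with a time-dependent scaling factor $c(t)=\max\{\log(\gamma(\varphi^t)/C),0\}$, whereas you concatenate a pure shrinking segment with a rescaled isotopy; your version has the minor bonus of bounding by $\gamma(\varphi)$ rather than $C$, which makes the deduction $\widetilde\gamma\leq\gamma$ immediate.
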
 
\begin{proof} 
Let $\rho^s$ be the flow of the Liouville vector field, so that $(\rho^s)^*\omega= e^s\omega$. Then $\gamma (\rho^s\varphi\rho_s^{-1})= e^{-s} \gamma (\varphi)$. Since $\DHam (M, \omega) $ and
$\DHam_c (M, \omega)$ are connected by definition, let $\varphi^t$ be a path connecting $\varphi=\varphi^1$ to $\id$. Then let $c(t)=\max \left \{\log(\{ \frac{\gamma (\varphi^t)}{C}), 0\right \})$, so that $c(0)=c(1)=0$. Then 
$\psi^t=\rho^{c(t)}\varphi^t\rho^{-c(t)}$ is a path from $\id$ to $\varphi$ and $\gamma (\psi^t)= e^{-c(t)} \gamma (\varphi^t)\leq C$ and we get a path from $\id$ to $\varphi$ contained in
the $\gamma$-ball of radius $C$. 
\end{proof} 
As a consequence we get
\begin{lem}\label{Lemma-standard-extension}
Let $(\varphi^{t})_{t\in [0,1]}$ be a Hamiltonian isotopy with compact support in the Liouville domain $(M, \omega)$. Then there is a compact supported Hamiltonian isotopy $(\Phi^{t})_{t\in [0,1]}$  in 
$M\times B^{2}(r)$ such that 
\begin{enumerate} 
\item $\Phi^{1} _{\mid M\times \{0\}}= \varphi^{1}$
\item $\gamma (\Phi^{1}) \leq C_{n} \gamma(\varphi^{1})$
\end{enumerate} 
where $C_{n}$ is a constant depending only on the dimension. 
\end{lem}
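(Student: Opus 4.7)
The plan is to take the explicit ``cutoff extension'' of $H_t$ and bound its spectral norm via the inverse reduction inequality. Let $H_t$ be a compactly supported Hamiltonian on $M$ generating $\varphi^t$; since $M$ is a Liouville domain, the previous proposition yields $\gamma=\widetilde\gamma$, so for any $\varepsilon>0$ the path $\varphi^t$ may be chosen with $\sup_{t\in[0,1]}\gamma(\varphi^t)\le \gamma(\varphi^1)+\varepsilon$. Pick a smooth cutoff $\rho\colon [0,\infty)\to [0,1]$ with $\rho\equiv 1$ near $0$ and $\supp(\rho)\subset [0,r^2)$, and set $\widetilde H_t(x,z):=\rho(|z|^2)H_t(x)$ on $M\times B^2(r)$. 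The resulting Hamiltonian is compactly supported and its time-one flow $\Phi^1$ satisfies $\Phi^1(x,0)=(\varphi^1(x),0)$, giving the first conclusion.

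For the second conclusion I would estimate $\gamma(\Phi^1)$ by applying the inverse reduction inequality (Theorem~\ref{Thm-Reverse-reduction-inequality}) to the graph $\Gamma_{\Phi^1}\subset (M\times B^2(r))\times \overline{(M\times B^2(r))}$, slicing in the two-dimensional $B^2(r)\times\overline{B^2(r)}$ directions. Since $\widetilde H_t$ depends on $z$ only through $|z|^2$, the flow $\Phi^t$ preserves every circle $\{|z|=r_0\}$, and the $M$-component of the flow along such a circle is the flow of the rescaled Hamiltonian $\rho(r_0^2) H_t$ (composed with an angular rotation which does not affect spectral invariants). As $r_0$ runs over $[0,r]$ the coefficient $\rho(r_0^2)$ varies in $[0,1]$, and the associated flow on $M$ at every intermediate value lies in the $\gamma$-ball of radius $\sup_t\gamma(\varphi^t)\le \gamma(\varphi^1)+\varepsilon$. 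Thus each reduction slice has spectral norm at most $\gamma(\varphi^1)+\varepsilon$, and combining with the dimensional factor from Theorem~\ref{Thm-Reverse-reduction-inequality} — applicable via Remark~\ref{Rems-5.12}~(\ref{Rems-5.12-d}) since the $c_\pm$ of each slice straddle $0$ — produces $\gamma(\Phi^1)\le C_n(\gamma(\varphi^1)+\varepsilon)$. Letting $\varepsilon\to 0$ closes the argument.

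The main obstacles are twofold. First, Theorem~\ref{Thm-Reverse-reduction-inequality} is formulated for Lagrangians in $T^*(X\times Y)$, so we must either embed $M\times B^2(r)$ exact-symplectically into some $T^*N$ and transport the estimate, or observe that the proof of Theorem~\ref{Thm-Reverse-reduction-inequality} is really a persistence-module argument (boundary depth together with Mayer--Vietoris) that extends to any symplectic setting supporting Lagrangian Floer theory with the usual action filtration. Second, the rescaled flow $\varphi^1_{\rho(r_0^2)H_t}$ is not literally a reparametrization of $\varphi^t$ when $H_t$ is time-dependent, so bounding its $\gamma$ by $\gamma(\varphi^1)+\varepsilon$ requires a short argument applying $\gamma=\widetilde\gamma$ to the family of rescaled paths $s\mapsto \varphi^s_{\rho(r_0^2)H_t}$ — this is the delicate bookkeeping of the proof. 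A subsidiary point is guaranteeing embeddedness of the reductions as required by Theorem~\ref{Thm-Reverse-reduction-inequality}, which follows from a generic transversality perturbation of $\Phi^t$ with arbitrarily small effect on $\gamma$.
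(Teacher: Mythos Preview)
Your overall strategy---cutoff extension plus the inverse reduction inequality applied to the graph, with Remark~\ref{Rems-5.12}(\ref{Rems-5.12-d}) to pass from $c_\pm$ to $\gamma$---is exactly what the paper does. The one substantive difference is the choice of extended Hamiltonian, and here the paper makes a sharper choice that eliminates the ``delicate bookkeeping'' you flag.

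You take $\widetilde H_t(x,z)=\rho(|z|^2)H_t(x)$, whose $M$-component flow on the circle $|z|=r_0$ is the time-one map of $\rho(r_0^2)H_t$. As you correctly note, for time-dependent $H$ this is \emph{not} a point on the path $(\varphi^t)$, and bounding $\gamma\bigl(\varphi^1_{\rho H_t}\bigr)$ by $\sup_t\gamma(\varphi^t)$ is not obvious---the argument you sketch via $\gamma=\widetilde\gamma$ would require controlling $\sup_s\gamma(\varphi^s_{\rho H_t})$, which is a different family altogether. The paper instead sets
\[
K(t,z,r,\theta)=\chi(r)\,H\bigl(t\chi(r),z\bigr),
\]
i.e.\ it reparametrizes time by $\chi(r)$ inside $H$ as well. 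A change of variable $s=t\chi(r)$ then shows the $M$-component of the flow at radius $r$ is exactly $\varphi^{t\chi(r)}$, so the reduction of $\Gamma(\Phi^1)$ at $r=r_0$ is $\Gamma(\varphi^{\chi(r_0)})\times 0_{S^1}$, a genuine point on the original path. The bound $\gamma(\varphi^{\chi(r_0)})\leq\gamma(\varphi^1)$ is then immediate from the Liouville hypothesis (the paper states this directly, without the $\varepsilon$), and no further bookkeeping is needed.

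Two of your side concerns dissolve with this choice: the reductions are graphs of Hamiltonian diffeomorphisms (times a zero section), hence automatically embedded, so no genericity perturbation is required; and the paper, like you, is implicitly identifying the graph with a Lagrangian in a cotangent bundle to invoke Theorem~\ref{Thm-Reverse-reduction-inequality}.
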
 
\begin{proof} 
The proof is based on the Inverse reduction inequality. First, because $(M, \omega)$  is a Liouville domain, we may assume that $\gamma(\varphi^{t}) \leq \gamma(\varphi^{1})$ for all $t\in [0,1]$. Now let $\chi : [0,r] \longrightarrow [0,1]$ be a smooth map vanishing near $r$, equal to $1$ for $r$ near $0$. 
Let $H(t,q,p)$ be the Hamiltonian generating $\varphi^{t}$. Then set $K(t,z,r,\theta)=\chi(r)H(t\chi(r),q,p)$, where $(r,\theta)$ are polar coordinates in $B^{2}(r)$, so that the flow of $K$ is given, with $z=(q,p)$  by 
$$(q,p,r,\theta) \longmapsto ( Q_{t\chi(r)}(q,p), P_{t\chi(r)}(q,p),  r, \theta +\alpha(t,r,z))$$
where \begin{gather*}  \alpha(t,z,r)=\frac{ \chi'(r)}{r} \int_{0^{t}}H(s\chi(r),Q_{s}(q,p), P_{s}(q,p))ds+\\ \frac{\chi(r)\chi'(r)}{r} \int_{0}^{t}\frac{\partial}{\partial t} H(s\chi(r),Q_{s}(q,p), P_{s}(q,p))ds
\end{gather*} 
And indeed for $r=0$, since $\chi(r)=1, \chi'(r)=0$ we get $$(q,p,0,\theta) \mapsto ( Q_{t}(q,p), P_{t}(q,p),0,\theta)$$
that is $\varphi^{t}$. 
Now let us consider the graph of $\Phi^{t}$ that is 
$$ \left\{(q,P_{t\chi(r)}(q,p), p-P_{t\chi(r)}(q,p), Q_{t\chi(r)}(q,p)-q, \frac{R^{2}}{2}, \theta,  \alpha(t,q,p,r), \frac{r^{2}-R^{2}}{2}) \right \}$$
Notice that here $r=R$ and so the reduction at $r=r_{0}$ is given by 
$$ \left\{(q,P_{t\chi(r)}(q,p), p-P_{t\chi(r)}(q,p), Q_{t\chi(r)}(q,p)-q, \theta, 0) \right \}$$ that is 
$\Gamma (\varphi^{t\chi(r)}\times 0_{S^{1}}$ and $\gamma (\Gamma (\varphi^{t\chi(r)}\times 0_{S^{1}}))= \gamma (\Gamma (\varphi^{t\chi(r)})) = \gamma (\varphi^{t\chi(r)}) \leq \gamma(\varphi^{1})$. 
By the inverse reduction inequality, since in our case $c_+(\Gamma (\phi))$ and $c_+(\Gamma (\Phi))$ are positive and 
$c_-(\Gamma (\phi))$ and $c_-(\Gamma (\Phi))$ are negative, according to Remark \ref{Rems-5.12} (\ref{Rems-5.12-d}), 
$$ \gamma (\Gamma (\Phi^{1})) \leq C_{n}\sup_{r}\gamma (\varphi^{t}) = C_{n} \gamma (\varphi^{1})$$ 
\end{proof}

We could also define a stabilized distance as follows
\begin{defn} 
Let $\varphi \in \DHam (M,\omega)$. A stabilization of $\varphi \in \DHam_c(M)$ is an element $\Phi \in \DHam_c(M\times B^2(1))$  that is  an {\bf extension} of $\varphi$, considered as a map from $M\times \{0\}$ to $M\times \{0\}$,  to $M \times B^2(1)$  and equal to the identity near the boundary $M\times \partial B^2(1)$. In other words $\Phi$ is compact suported in $M\times B^{2}(1)$ and $\Phi_{\mid M\times \{0\}}=\varphi$.
We say that $\Phi$ is a {\bf standard extension} of $\varphi$ If moreover $\Phi$ preserves $M\times S^1(r)$  and  there exists a Hamiltonian isotopy $\varphi^t$ from $\Id$ to $\varphi$ such that 
$$\Phi^t(r,\theta, z)=(r, \Theta (t,r, \theta,z), \varphi^{t\chi(r)}(z))$$ where $\chi(r)=1, \Theta(t,r,\theta,z)=\theta$ for $ r= 1/2$ and $\chi(r) \in C_c^\infty(]0,1[, {\mathbb R} )$ and $0 \leq \chi(r)\leq 1$. In particular  $\Phi^t$ is  the Hamiltonian flow of  $K(t,r,\theta)=\chi(r)H(t\chi(r),z)$. 

\end{defn} 
As a consequence, let $(\varphi^t)_{t\in [0,1]} \in \DHam (M, \omega)$ be a path such that $\gamma (\varphi^t) < \varepsilon$  for all $t\in [0,1]$, so that $\widetilde \gamma(\varphi)< \varepsilon $. Then we claim that $\varphi^t$ has a compact extension to a compact supported  map $\Phi^t $ in $\DHam (M \times B^2(1))$ such that $\gamma(\Phi^t) < \varepsilon $ and $\Phi^t(r,\theta, z)=(r, \Theta (t,r, \theta,z), \varphi^{t\chi(r)}(z))$ where $\chi(r)=1, \Theta(t,r,\theta,z)=\theta$ for $ r= 1/2$ and $\chi(r) \in C^\infty(]0,1[, {\mathbb R} )$ and $0 \leq \chi(r)\leq 1$. This is easy, by taking $\Phi^t$ to be the Hamiltonian flow of  $K(t,r,\theta)=\chi(r)H(t\chi(r),z)$. Indeed its flow is given by $$(r,\theta, z) \mapsto (r, \Theta (t,r, \theta,z), \varphi^{t\chi(r)}(z))$$ so consider the image $\Gamma (\Phi)$ of  its graph 
by 
$$ (r,\theta,z, R, \Theta, Z) \mapsto (r, R-r,  \theta-\Theta, \Theta, z,Z)=(u,v,U,V,z,Z)$$ 
with symplectic form $dU\wedge du + dV\wedge dv +z^*\omega_M-Z^*\omega_M$
\begin{prop} \label{Prop-3.3}
We have for any extension $(\Phi^t)_{t\in [0,1]}$ of $(\varphi^t)_{t\in [0,1]}$ the inequality $$\sup_{\tau \in [0,t]}\gamma (\varphi^\tau)\leq \gamma (\Phi^t)$$ 
If moreover $\Phi$ is  a standard extension, then  $$\gamma(\Phi^t) \leq C_d \sup_{\tau \in [0,t]}\gamma (\varphi^\tau)$$
\end{prop}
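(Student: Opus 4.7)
The plan is to realize both bounds as applications of the reduction and inverse reduction inequalities for Lagrangian spectral invariants, taking the $B^2(1)$-factor as the ``base'' over which we slice. The essential geometric observation, particular to standard extensions, is that the generating Hamiltonian $K(t,r,\theta,z)=\chi(r)H(t\chi(r),z)$ is independent of $\theta$, so the flow $\Phi^t$ preserves $r$ and descends through the characteristic foliation of the coisotropic slice $\{r=r_{0}\}$ (whose leaves are the $\theta$-circles) to a flow on the symplectic quotient, which is exactly $M$. This quotient flow coincides with $\varphi^{t\chi(r_{0})}$. Translating this into the graph formalism used before the proposition, the reduction of $\Gamma(\Phi^{t})$ at $\{u=r_{0},\;v=0\}$ in the coordinates $(u,v,U,V,z,Z)$ collapses the $(U,V)$-direction and produces precisely $\Gamma(\varphi^{t\chi(r_{0})})$.

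For the first inequality, I would apply the classical reduction inequality (Proposition \ref{Prop-reduction-inequality}, or the sheaf-theoretic Proposition \ref{Prop-4.2}) to the pair $(\Gamma(\Phi^{t}),\Delta)$, reducing at each slice $\{r=r_{0}\}$. This yields $\gamma(\varphi^{t\chi(r_{0})})\leq\gamma(\Phi^{t})$ for every $r_{0}\in[0,1]$. Since the cutoff $\chi$ takes all values in $[0,1]$, the parameter $t\chi(r_{0})$ exhausts the full interval $[0,t]$, and taking the supremum gives $\sup_{\tau\in[0,t]}\gamma(\varphi^{\tau})\leq\gamma(\Phi^{t})$. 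For the weaker formulation valid for \emph{any} (not necessarily standard) extension, one applies the same argument only at the distinguished slice $r=0$, where $\Phi^{t}_{|M\times\{0\}}=\varphi^{t}$ by the extension hypothesis.

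For the second inequality I would invoke Theorem \ref{Thm-Reverse-reduction-inequality}. The reductions at each $r_{0}$ are embedded (they are graphs of Hamiltonian diffeomorphisms), so the hypothesis of the inverse reduction inequality is satisfied, with $d=\dim B^{2}(1)$ as the dimensional parameter. To upgrade from the a priori bound $c_{+}-c_{-}\le \sup c_{+}-\inf c_{-}+2d\sup\gamma$ coming directly from Theorem \ref{Thm-Reverse-reduction-inequality} to the clean multiplicative estimate $\gamma(\Phi^{t})\leq C_{d}\sup_{\tau}\gamma(\varphi^{\tau})$, I would appeal to Remark \ref{Rems-5.12}(\ref{Rems-5.12-d}), whose hypothesis is that $c_{-}\leq 0\leq c_{+}$ on the reductions. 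In our setting this amounts to $c_{-}(\Gamma(\varphi^{\tau}),\Delta)\leq 0\leq c_{+}(\Gamma(\varphi^{\tau}),\Delta)$ for all $\tau\in[0,t]$, which is a standard property of spectral invariants of Hamiltonian diffeomorphisms (the triangle inequality applied against $\psi=\mathrm{id}$, together with $c_{\pm}(\mathrm{id})=0$, forces both signs; this same argument was used in the proof of Theorem \ref{Thm-6.3} above).

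The main obstacle is the geometric bookkeeping of the reduction: one must verify carefully, using that $K$ is $\theta$-independent, that the full reduction at $\{r=r_{0}\}$ collapses both of the auxiliary coordinates $U=\theta-\Theta$ and $V=\Theta$ and produces a clean graph in $M\times M^{-}$, rather than a twisted Lagrangian carrying residual information from the angular coordinate. Once this identification is secured, both inequalities are immediate consequences of the corresponding general inequalities of Section \ref{Section-7}.
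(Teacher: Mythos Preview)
Your proposal is correct and follows essentially the same approach as the paper: the direct reduction inequality for the first bound, and the inverse reduction inequality (Theorem~\ref{Thm-Reverse-reduction-inequality}) together with Remark~\ref{Rems-5.12}(\ref{Rems-5.12-d}) and the sign property $c_{-}\leq 0\leq c_{+}$ for graphs of Hamiltonian diffeomorphisms for the second. Your treatment is in fact more careful than the paper's terse proof on one point: you correctly note that for an \emph{arbitrary} extension the reduction argument only sees the single slice $r=0$ and hence only yields $\gamma(\varphi^{t})\leq\gamma(\Phi^{t})$, whereas the full supremum $\sup_{\tau\in[0,t]}\gamma(\varphi^{\tau})$ requires the standard-extension structure so that the slices at varying $r_{0}$ sweep out all times $t\chi(r_{0})\in[0,t]$. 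The paper's one-line justification (``one of the reductions of $\Gamma(\Phi)$ is $\Gamma(\varphi)$'') glosses over this distinction.
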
 
\begin{proof} The first is just the standard reduction inequality, since one of the reductions of $\Gamma(\Phi)$ is $\Gamma (\varphi)$. 
For the second one, if $\Phi$ is  standard extension, the reduction of $\Gamma (\Phi)$ by $r=r_0$ is the graph of $\varphi^{t\chi(r_0)}$. In particular for $r_0$ it is the graph of $\varphi$. 
This is an embedded Lagrangian, hence we may apply  both the direct and reverse 
reduction inequality to get the Proposition. 
\end{proof} 

\begin{prop}\label{Prop-9.4} Let $(M, \omega)$ be an aspherical symplectic manifold. 
\begin{enumerate} 
\item 
We have $\gamma (\varphi)\leq \widetilde \gamma (\varphi)$.
\item Let  $\varphi \in \DHam (M, \omega_M)$ Then for a standard extension $\Phi$ of $\varphi$ to $\DHam(M\times D^2(0,1), \omega_{M}\oplus \sigma)$. Then 
$$ \widetilde\gamma (\varphi) \leq \gamma (\Phi) \leq C_d  \widetilde\gamma (\varphi)  $$ where $d= \codim (M)$. 
\end{enumerate} 
\end{prop}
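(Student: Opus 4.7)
My plan is to deduce both assertions directly from Proposition \ref{Prop-3.3}, since that proposition has already packaged the relevant direct and inverse reduction inequalities. Part (1) is essentially tautological: for any Hamiltonian isotopy $(\varphi^t)_{t\in [0,1]}$ from $\id$ to $\varphi$, one has $\gamma(\varphi) = \gamma(\varphi^1) \leq \sup_t \gamma(\varphi^t)$, and infimizing over such paths yields $\gamma(\varphi) \leq \widetilde\gamma(\varphi)$.

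The lower bound $\widetilde\gamma(\varphi) \leq \gamma(\Phi)$ in part (2) follows as soon as one observes that a standard extension $\Phi$ carries, by construction, a distinguished isotopy $(\varphi^t)_{t\in [0,1]}$ from $\id$ to $\varphi$, obtained as the reduction of $\Phi^t$ at $r=0$ (where $\chi(0)=1$). The first part of Proposition \ref{Prop-3.3} supplies $\sup_\tau \gamma(\varphi^\tau) \leq \gamma(\Phi^1) = \gamma(\Phi)$; since this particular isotopy is a legitimate path from $\id$ to $\varphi$, the definition of $\widetilde\gamma$ yields $\widetilde\gamma(\varphi) \leq \sup_\tau \gamma(\varphi^\tau) \leq \gamma(\Phi)$.

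For the upper bound $\gamma(\Phi) \leq C_d \widetilde\gamma(\varphi)$, which must be read existentially (as producing a standard extension with this bound), I would start from a nearly optimal isotopy: given $\varepsilon > 0$ pick $(\varphi^t)_{t\in [0,1]}$ from $\id$ to $\varphi$ with $\sup_t \gamma(\varphi^t) \leq \widetilde\gamma(\varphi) + \varepsilon$, and let $H(t,z)$ be its generating Hamiltonian. Using a cutoff function $\chi \in C_c^\infty(]0,1[, [0,1])$ equal to $1$ near $0$, I build the standard extension $\Phi_\varepsilon$ generated by $K(t,r,\theta) = \chi(r) H(t\chi(r), z)$, exactly as in Lemma \ref{Lemma-standard-extension}. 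The second half of Proposition \ref{Prop-3.3} then yields
$$\gamma(\Phi_\varepsilon) \leq C_d \sup_\tau \gamma(\varphi^\tau) \leq C_d (\widetilde\gamma(\varphi) + \varepsilon),$$
and passing to the infimum over $\varepsilon$ gives the claim.

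The only real obstacle is interpretive rather than substantive: the upper bound cannot be universal over all standard extensions, because a standard extension built from a wasteful path $\varphi^t$ can have $\gamma(\Phi)$ arbitrarily large even when $\widetilde\gamma(\varphi)$ is small; the statement must be understood as the existence of an extension realizing $\gamma(\Phi) \leq C_d \widetilde\gamma(\varphi)$. All the genuine technical content lives in Proposition \ref{Prop-3.3}, which in turn invokes both the classical reduction inequality and the inverse reduction inequality of Section \ref{Section-7} (this is where asphericity of $(M,\omega)$ enters, to ensure $\gamma$ is well-defined on $\DHam(M,\omega)$).
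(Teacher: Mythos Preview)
Your proof is correct and matches the paper's approach: the paper's own proof is the single sentence ``Obvious from the definition,'' implicitly relying on Proposition~\ref{Prop-3.3} and the definition of $\widetilde\gamma$, which is exactly what you unwind. Your observation that the upper bound must be read existentially (i.e.\ for a standard extension built from a near-optimal path, not for an arbitrary one) is a valid point that the paper's terse proof does not address.
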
 
\begin{proof} 
Obvious from the definition.
\end{proof} 

In particular if we set
\begin{defn} Define $\widehat \gamma (\varphi)$ to be the infimum of all $\gamma (\Phi)$ for all extensions of $\varphi$.
\end{defn} 
The previous Proposition implies
\begin{prop} We have the inequalities $$\gamma (\varphi) \leq \widehat \gamma (\varphi) \leq \widetilde \gamma (\varphi)$$
\end{prop}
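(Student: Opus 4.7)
The plan is to deduce both inequalities directly from the material already established: the first is a straightforward consequence of the classical reduction inequality applied to the graph of any extension, while the second follows from the standard extension construction of Lemma \ref{Lemma-standard-extension} together with the inverse reduction inequality (Theorem \ref{Thm-Reverse-reduction-inequality}). The whole argument is of the ``definition-chasing'' flavor that the preceding Proposition \ref{Prop-9.4} already rests upon.

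For the first inequality $\gamma(\varphi)\leq \widehat\gamma(\varphi)$, I would fix an arbitrary extension $\Phi\in\DHam_c(M\times B^2(1))$ of $\varphi$ and view its graph $\Gamma(\Phi)$ inside the standard symplectic model with coordinates $(u,v,U,V,z,Z)$ introduced just before Proposition \ref{Prop-3.3}. The graph of $\varphi$ sits inside $\Gamma(\Phi)$ as its reduction by the coisotropic $\{r=0\}$, exactly as in the first half of Proposition \ref{Prop-3.3}. The ordinary reduction inequality (Proposition \ref{Prop-reduction-inequality}, specialized here to graphs of Hamiltonian diffeomorphisms) then yields $\gamma(\varphi)\leq \gamma(\Phi)$. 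Taking the infimum over all admissible extensions $\Phi$ gives $\gamma(\varphi)\leq \widehat\gamma(\varphi)$.

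For the second inequality $\widehat\gamma(\varphi)\leq \widetilde\gamma(\varphi)$, I would fix $\eps>0$ and choose a Hamiltonian isotopy $(\varphi^t)_{t\in [0,1]}$ from $\Id$ to $\varphi$ realizing $\sup_t\gamma(\varphi^t)\leq \widetilde\gamma(\varphi)+\eps$. Applying Lemma \ref{Lemma-standard-extension} to this path (after, if needed, reparametrizing it using the Liouville scaling trick of the previous section so that $\gamma(\varphi^t)\leq \gamma(\varphi^1)$) produces a standard extension $\Phi$ of $\varphi$. The bound on $\gamma(\Phi)$ established in the proof of Lemma \ref{Lemma-standard-extension}, together with Proposition \ref{Prop-9.4}(2), controls $\gamma(\Phi)$ by the supremum of $\gamma(\varphi^t)$ along the isotopy. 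Taking the infimum over extensions gives $\widehat\gamma(\varphi)\leq \gamma(\Phi)\leq \widetilde\gamma(\varphi)+\eps$, and then letting $\eps\to 0$ concludes.

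The main point to be careful about is the dimensional constant $C_d$ that the inverse reduction inequality introduces in Proposition \ref{Prop-9.4}(2): that proposition really provides $\gamma(\Phi)\leq C_d\widetilde\gamma(\varphi)$, so the naive version of the argument above produces $\widehat\gamma(\varphi)\leq C_d\widetilde\gamma(\varphi)$ rather than the stated cleaner inequality. To remove the constant one must use the full flexibility in the construction of the extension, exploiting that $\widehat\gamma$ is an infimum over \emph{all} extensions (not only the standard ones): for any given path realizing $\widetilde\gamma$ within $\eps$, one can deform the standard extension so that the reductions at all intermediate radii already appear in the path, so that the inverse reduction inequality is applied only to extract the value already present as $\sup_t\gamma(\varphi^t)$, and the dimensional factor is absorbed into the freedom of choice of $\chi$ and $\Theta$. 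This is the step I expect to be the most delicate, and the reason the author contents himself with ``Obvious from the definition'': in spirit the proposition records nothing beyond combining the reduction and inverse reduction inequalities with the standard extension construction.
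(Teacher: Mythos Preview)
Your approach coincides with the paper's: the proof there is literally the single sentence ``The previous Proposition implies'', i.e.\ it invokes Proposition~\ref{Prop-9.4} (which in turn rests on Proposition~\ref{Prop-3.3}). Your first inequality via the ordinary reduction inequality applied to $\Gamma(\Phi)$ at $r=0$ is exactly the intended argument.

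You are right to flag the dimensional constant in the second inequality, and your suspicion is well founded: the paper's own Proposition~\ref{Prop-9.4}(2) only gives $\gamma(\Phi)\leq C_d\,\widetilde\gamma(\varphi)$ for a standard extension, so the one-line proof actually yields $\widehat\gamma(\varphi)\leq C_d\,\widetilde\gamma(\varphi)$, not the clean inequality. Your proposed remedy---deforming the standard extension and adjusting $\chi,\Theta$---does not help: the factor $C_d$ originates in Theorem~\ref{Thm-Reverse-reduction-inequality} itself (it counts the number of open sets in the covering of the one-dimensional base of the reduction) and is independent of how the reductions are parametrized. No choice of extension whose $\gamma$ is estimated via the inverse reduction inequality will remove it.

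In short, the clean inequality $\widehat\gamma\leq\widetilde\gamma$ does not follow from the material assembled in the paper; what follows is $\widehat\gamma\leq C_d\,\widetilde\gamma$. This is harmless for the intended application (comparing the uniform structures, hence local path-connectedness), but the statement as written appears to be an oversight rather than something you should try to salvage.
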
 
\begin{cor} 
Let $(M, \omega) \longrightarrow (P, \omega_{P})$ be a symplectic embedding. Then there exists a non-expanding embedding $(\DHam_{c}(M,\omega_{M}), \widetilde \gamma) \longrightarrow (\DHam_{c}(P,\omega_{P}), \widetilde \gamma)$ and therefore of their completions. 
\end{cor}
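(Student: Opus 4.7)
The plan is to construct the extension map in two stages: first, use the Weinstein symplectic neighborhood theorem to identify a neighborhood of (the image of) $M$ in $P$ with a neighborhood of $M \times \{0\}$ in $M \times B^{2k}(r)$, where $2k = \codim(M)$ and $r > 0$; second, lift compactly supported Hamiltonian diffeomorphisms of $M$ to this Weinstein chart by iterated application of the standard extension of Lemma \ref{Lemma-standard-extension}, and then extend by the identity to all of $P$.

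More precisely, given $\varphi \in \DHam_c(M, \omega_M)$ and $\varepsilon > 0$, I would first select an isotopy $(\varphi^t)_{t \in [0,1]}$ from $\Id$ to $\varphi$ realising $\sup_t \gamma_M(\varphi^t) \leq \widetilde\gamma_M(\varphi) + \varepsilon$. Applying Lemma \ref{Lemma-standard-extension} once for each pair of normal coordinates produces a compactly supported isotopy $(\Phi^t)$ in $M \times B^{2k}(r)$ with $\Phi^1$ restricting to $\varphi$ on $M \times \{0\}$; composing with the Weinstein identification and extending by the identity on the complement of the chart yields an isotopy in $\DHam_c(P, \omega_P)$. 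Define the extension map by $\varphi \mapsto \Phi^1$; bi-invariance of $\widetilde\gamma$ together with functoriality of this construction shows that it descends to a well-defined map between the groups.

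For the metric estimate, the second inequality of Proposition \ref{Prop-3.3} (equivalently the right-hand bound in Proposition \ref{Prop-9.4}) applied to each step of the iteration yields $\gamma_P(\Phi^t) \leq C_d \sup_{\tau \leq t} \gamma_M(\varphi^\tau)$ for a constant $C_d$ depending only on the codimension. Taking the supremum over $t$, then the infimum over the generating isotopy, and finally letting $\varepsilon \to 0$ gives $\widetilde\gamma_P(\Phi^1) \leq C_d \widetilde\gamma_M(\varphi)$, i.e.\ the embedding is non-expanding (in the Lipschitz sense). The extension to the completions then follows automatically from the general fact that any Lipschitz map between metric spaces extends uniquely to the completions with the same Lipschitz constant.

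The main technical point I anticipate is the compounding of Lemma \ref{Lemma-standard-extension} under the $k$-fold iteration: one must verify that the resulting diffeomorphism is compactly supported in the entire Weinstein chart $M \times B^{2k}(r)$ (not merely in a smaller subchart), and that the constants accumulate into a single constant $C_d$ depending only on $\codim(M)$. This is routine bookkeeping with the cutoff functions $\chi$ appearing in the construction $K(t, q, p, r, \theta) = \chi(r) H(t\chi(r), q, p)$, but needs some care to keep supports nested correctly as one adds successive normal directions; in the codimension-zero case the construction collapses to extension by the identity and the argument is strictly isometric.
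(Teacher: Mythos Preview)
Your approach matches the paper's: associate to $\varphi$ its standard extension $\Phi$ and appeal to Proposition~\ref{Prop-9.4}. The paper's argument is two lines and does not spell out the Weinstein identification, the iteration in codimension $>2$, or the Lipschitz constant $C_d$; your added detail on these points is correct and helpful (the paper in fact writes $\widetilde\gamma(\Phi)\le\widetilde\gamma(\varphi)$, silently absorbing the constant).

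There is, however, a step that both you and the paper pass over too quickly. The sentence ``bi-invariance of $\widetilde\gamma$ together with functoriality of this construction shows that it descends to a well-defined map between the groups'' is not justified: the standard extension $\Phi^1$ depends on the chosen isotopy $(\varphi^t)$ and on $\chi$, so $\varphi\mapsto\Phi^1$ is neither canonical nor a group homomorphism. Consequently bi-invariance alone does not promote the single-point bound $\widetilde\gamma_P(\Phi^1)\le C_d\,\widetilde\gamma_M(\varphi)$ to the two-point Lipschitz estimate $\widetilde\gamma_P(\Phi_1,\Phi_2)\le C_d\,\widetilde\gamma_M(\varphi_1,\varphi_2)$ that ``non-expanding embedding'' requires. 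One clean repair: observe that every standard extension preserves the level sets $\{r=r_0\}$, hence so does the composite $(\Phi_2^t)^{-1}\Phi_1^t$, and its reduction at $r=r_0$ has graph $\Gamma\bigl((\varphi_2^{t\chi(r_0)})^{-1}\varphi_1^{t\chi(r_0)}\bigr)\times 0_{S^1}$; applying the inverse reduction inequality directly to this composite (rather than to $\Phi_1,\Phi_2$ separately), and choosing the isotopy for $\varphi_1$ as the concatenation of the one for $\varphi_2$ with a near-optimal path from $\varphi_2$ to $\varphi_1$, yields the required two-point bound.
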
 
\begin{proof} 
Indeed, associate to  $\varphi$ its standard extension $\Phi$. Proposition \ref{Prop-9.4} implies that $\widetilde {\gamma}(\Phi) \leq \widetilde {\gamma}(\varphi)$, hence the statement. 
\end{proof} 

\appendix
\section{ The Kislev-Shelukhin inequality}\label{Appendix1}\index{Kislev-Shelukhin inequality}\label{Appendix-KS-inequality}
The goal of this appendix is to make the paper more self-contained by giving a proof of the Kislev-Shelukhin inequality. There is nothing new here compared to \cite{Ki-Sh} except that we reduced their proof to an abstract result on persistence modules endowed with a ring structure, but this is already implicit in their work.  

Let $\beta(L_1,L_2)$ be the boundary depth of $FH^*(L_1, L_2)$. This is the size of the longest bar of the barcode associated to the persistence module $t \mapsto FH^*(L_1, L_2;t)$. It is also twice the bottleneck distance between this persistence module and the persistence module $0$. Then we have 
\begin{prop} (\cite{Ki-Sh})\label{Ki-Sh}
We have the inequality
$$ \beta (L_1,L_2) \leq \gamma (L_1,L_2)$$
 \end{prop}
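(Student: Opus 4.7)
The plan is to reduce the inequality $\beta(L_1, L_2) \leq \gamma(L_1, L_2)$ to an abstract statement about persistence modules equipped with a Poincar\'e-type duality pairing coming from the triangle product; this is essentially the argument of Kislev and Shelukhin.

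First, I would apply the barcode normal form theorem to the persistence module $V^t := FH^*(L_1, L_2; t)$, decomposing it as
$$V \;=\; \bigoplus_i I[a_i, b_i) \;\oplus\; \bigoplus_j I[c_j, +\infty).$$
The infinite bars form a basis of the total cohomology $V_\infty \simeq H^*(N)$, and by definition the spectral numbers $c_+(L_1, L_2)$ and $c_-(L_1, L_2)$ are the starting filtrations of the infinite bars carrying $\mu_N$ and $1_N$ respectively, while $\beta(L_1, L_2) = \sup_i (b_i - a_i)$ is the length of the longest finite bar.

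Second, I would combine the triangle product of Theorem \ref{Thm-Quantization}(\ref{TQ5}) with the identification $FH^*(L_1, L_1) \simeq H^*(N)$ and Poincar\'e duality on $H^*(N)$ to obtain a nondegenerate, filtration-respecting pairing
$$V^\bullet \otimes W^{n-\bullet} \longrightarrow H^n(N) = k \cdot \mu_N, \qquad W^t := FH^{n-*}(L_2, L_1; t),$$
whose limit at $-\infty$ recovers the classical Poincar\'e pairing on $H^*(N)$. A standard argument on paired finite-type persistence modules then forces the barcode of $W$ to be the ``reflection'' of that of $V$: finite bars $[a, b)$ of $V$ correspond bijectively to finite bars $[-b, -a)$ of $W$ in the appropriate shifted degree, and infinite bars in $V$ carrying a class $\alpha$ at filtration $c(\alpha, L_1, L_2)$ are matched with infinite bars of $W$ carrying the Poincar\'e dual $\alpha^\vee$ at filtration $-c(\alpha, L_1, L_2)$.

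Third, I would translate the existence of a finite bar of length $\beta - \varepsilon$ into a quantitative bound. Taking a cycle representative $x$ at filtration $a$ of the left endpoint of such a bar, and $y$ at filtration $b = a + \beta - \varepsilon$ with $x = \partial y$, one pairs $x$ via Step 2 against a chain representative of $1_N$ in $W$ (which, by the barcode duality, lives at filtration $-c_+(L_1, L_2)$). The chain-level Leibniz rule for the triangle product, together with the fact that $1_N$ is the unit for the cup product, shows that this pairing can be traded between the two endpoints of the bar, giving $\beta - \varepsilon \leq c_+(L_1, L_2) - c_-(L_1, L_2) = \gamma(L_1, L_2)$; letting $\varepsilon \to 0$ concludes.

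The main obstacle is Step 2: making the barcode duality between $V$ and $W$ precise, with careful bookkeeping of gradings, signs, and filtration conventions. This is exactly where the ring/pairing structure on the persistence module is indispensable, and it is the content of Proposition 36 in \cite{Ki-Sh}.
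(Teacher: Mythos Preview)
Your route diverges from the paper's, and your Step~3 contains a real gap.

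The paper's argument never invokes barcode duality or a chain-level pairing. Instead it axiomatises the situation: one has persistence modules $V^{i,j}$ with a triangle product, units $u_{i,j}\in V^{i,j}_{c_{i,j}}$ satisfying $u_{j,k}\cdot u_{k,j}=u_j$, and the crucial Condition~(\ref{Cond-4}) that multiplication by the diagonal unit $u_j$ \emph{is} the restriction map $r_a$. Then $\mu(\bullet\otimes u_{j,k}):V^{i,j}\to V^{i,k}[c_{j,k}]$ and $\mu(\bullet\otimes u_{k,j}):V^{i,k}\to V^{i,j}[c_{k,j}]$ compose to $r_{c_{j,k}+c_{k,j}}$ by associativity, so they form an interleaving of size $\gamma(V^{j,k})$. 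Specialising to $V^{i,j}=V$ and $V^{i,k}=V^\infty$ (the module with finite bars deleted) gives $\beta(V)=2d_I(V,V^\infty)\leq\gamma(V)$ in one line. That is the entire proof.

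Your Steps~1--2 are correct but superfluous for this inequality; the paper never decomposes $V$ into bars, and the reflection $[a,b)\leftrightarrow[-b,-a)$ between $V$ and $W$ plays no role. The genuine problem is Step~3. You take $x=\partial y$ with $x$ at level $a$, $y$ at level $b$, and pair $x$ against a representative $e$ of $1_N\in W$ at level $-c_+$. The Leibniz rule gives $x\cdot e=\partial(y\cdot e)$ in $FH^*(L_1,L_1)^{a-c_+}$, but this only tells you $x\cdot e$ is a boundary there --- which is automatic, since $x$ is already a boundary and the product is a chain map. Nothing so far constrains $b-a$ by $c_+-c_-$. The phrase ``this pairing can be traded between the two endpoints of the bar'' is not an argument; to make it one you would need that multiplication by $e=u_{2,1}$ followed by multiplication by $u_{1,2}$ returns you to $V$ as the shift map $r_{c_{1,2}+c_{2,1}}$, i.e.\ exactly the interleaving the paper writes down. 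Once you isolate that, the barcode duality of Step~2 is no longer needed, and you have recovered the paper's proof.
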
 
 
 Given Lagrangians, $(L_i)_{i\in I}$, we consider the persistent modules $V^{i,j}_t=FH_*(L_i,L_j, t)$ for $i, j$ in some set $I$, $t\in {\mathbb R} $, endowed with the following structures:
 
 \begin{enumerate} 
 \item \label{Property-1}The persistent module structure that is a family of maps $r_{s,t}^{i,j}: V^{i,j}_s \longrightarrow  V^{i,j}_t$ for $s\leq t$ satisfying $r_{t,u}\circ r_{s,t}= r_{s,u}$ and $r_{t,t}=\Id$. 
 We shall only consider the case where $V^{i,j}_t=0$ for $t<<0$ and $V^{i,j}_t=V^{i,j}_{\infty}$ for $t>>0$. We also write $r_a^{i,j}$ for one of the maps $r_{t,t+a}^{i,j}$ when $t$ is unspecified. 
 \item \label{Property-2} A composition map induced by the triangle product $$\mu_{i,j,k}: V^{i,j}_s\otimes V^{j,k}_t \longrightarrow V^{i,k}_{s+t}$$ 
 \item \label{Property-3} A PSS\footnote{the existence is originally due to \cite{P-S-S}} unit $u_{i,j}\in V^{i,j}$ such that the maps $$\mu_{i,j,k}(\bullet \otimes u_{j,k}): V^{i,j}_\infty \longrightarrow V^{i,k}_{\infty}$$  and 
 $$\mu_{i,j,k}(u_{i,j} \otimes \bullet ): V^{i,k}_\infty \longrightarrow V^{j,k}_{\infty}$$
are isomorphisms and $\mu_{i,j,k}(u_{i,j}\otimes u_{j,k})=u_{i,k}$. 
\end{enumerate} 
satisfying the following conditions
\begin{enumerate} 
  \item \label{Cond-1} This composition is associative, i.e. $$\mu_{i,k,l}\circ (\mu_{i,j,k}\otimes \id_{k,l}) =\mu_{i,j,l}\circ(\id_{i,j}\otimes\mu_{j,k,l})$$
 \item  \label{Cond-2} For $i=j=k$ this defines a ring structure on $V^i_\infty=V^{i,i}_\infty$. In particular $V^i_\infty$ is a unitary ring (the unit is $u_i$) with spectrum concentrated at $t=0$ : the vector space $V^i_t=V^{i,i}_t$ is zero for $t<0$ and equal to $V^i=V^i_\infty$ for $t>0$.
 
\item  \label{Cond-3}  The $\mu_{i,i,j}$ (resp. $\mu_{i,j,j}$) defines a structure of left $V^i$-module (resp. right $V^{j}$-module) on $V^{i,j}$ and $\mu_{i,j,k}$ is a morphism of $(V^i,V^k)$-modules.
\item  \label{Cond-4} for $a>0$ the maps
 $$\mu(u_i\otimes \bullet) : V^{i,j}_t \longrightarrow V^{i,j}_{t+a}$$ and 
 $$\mu(\bullet \otimes u_j) : V^{i,j}_t \longrightarrow V^{i,j}_{t+a}$$
 coincide with the restriction map $r_{t,t+a}$.  
 \end{enumerate} 
 
 With these structures 
 \begin{defn} We set
 \begin{enumerate} 
 \item $0$ is the persistence module such that $0_t=0$. 
 \item If $V$ is a persistence module we denote by $V[a]$ the persistence module defined by $(V[a])_t=V_{t+a}$ and by $\varphi_a: V \longrightarrow V[a]$
 the map induced by $r_{t,t+a}$. 
 \item The interleaving distance $d_I (V^{i,j}, V^{k,l})$ between $V^{i,j}$ and $V^{k,l}$ is the infimum of the set of $\frac{a}{2} \in {\mathbb R} $ such that there exists
 interleaving morphisms $\sigma: V^{i,j} \longrightarrow V^{k,l}[a']$ and $\tau: V^{k,l} \longrightarrow V^{i,j}[a'']$ such that for $a=a'+a''$ we have $\sigma\circ \tau =r^{k,l}_a$ and 
 $\tau\circ \sigma = r^{i,j}_{a}$
 \item $\beta (V^{i,j})=\beta (V^{i,j}, V^{i,j}_\infty)$ is the boundary depth, equal to $2d_I(V^{i,j})=2d_I (V^{i,j}, V^{i,j}_\infty)$
\item We define $c_{i,j}=c(u_{i,j},V^{i,j})= \inf \{ c \in {\mathbb R} \mid u_{i,j}\in V^{i,j}_c\}$ and $\gamma(V^{i,j})=c_{i,j}+c_{j,i}$
 \end{enumerate} \end{defn} 
 From now on we omit the $\mu$ and denote the product by ``$\cdot $ '', i.e. $$x\cdot y=\mu_{i,j,k}(x\otimes y)$$ 
 Since $u_{i,j} \in V^{i,j}_{c_{i,j}}$ we get a map $V^{i,j} \longrightarrow V^{i,j}[c_{i,j}]$
 \begin{prop} \label{Prop-App1}
 We have $\beta (V^{i,j}, V^{i,k}) \leq \gamma (V^{j,k})$
 \end{prop}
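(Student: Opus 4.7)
The plan is to build an explicit interleaving between $V^{i,j}$ and $V^{i,k}$ using right multiplication by the PSS units $u_{j,k}$ and $u_{k,j}$, and to identify the two compositions with the shift maps via the ring-theoretic identity $u_{j,k}\cdot u_{k,j}=u_{j}$.

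First, set $a' = c_{j,k} = c(u_{j,k}, V^{j,k})$ and $a'' = c_{k,j}$, so that $a' + a'' = \gamma(V^{j,k})$. Since $u_{j,k} \in V^{j,k}_{c_{j,k}}$, the triangle product induces a morphism of persistence modules
\[
\sigma : V^{i,j} \longrightarrow V^{i,k}[a'], \qquad \sigma(x) = x \cdot u_{j,k},
\]
because the composition map $\mu_{i,j,k}$ sends $V^{i,j}_t \otimes V^{j,k}_{c_{j,k}}$ to $V^{i,k}_{t+c_{j,k}} = (V^{i,k}[a'])_t$. Symmetrically, define
\[
\tau : V^{i,k} \longrightarrow V^{i,j}[a''], \qquad \tau(y) = y \cdot u_{k,j}.
\]
Both $\sigma$ and $\tau$ commute with the structure maps $r_{s,t}$ by naturality of the triangle product and associativity (Condition (\ref{Cond-1})).

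Next I compute the two compositions. By associativity of $\mu$,
\[
(\tau\circ\sigma)(x) \;=\; (x \cdot u_{j,k})\cdot u_{k,j} \;=\; x \cdot (u_{j,k}\cdot u_{k,j}).
\]
Now apply the unit identity from property (\ref{Property-3}), namely $\mu_{i,j,k}(u_{i,j}\otimes u_{j,k}) = u_{i,k}$, with the substitution $i \mapsto j,\; j\mapsto k,\; k\mapsto j$: this yields $u_{j,k}\cdot u_{k,j} = u_{j,j} = u_{j}$. Hence $\tau\circ\sigma$ is right multiplication by $u_{j}$. By Condition (\ref{Cond-4}) (with shift $a = a'+a'' = \gamma(V^{j,k}) > 0$), this multiplication map coincides with the restriction map $r_{t,\,t+\gamma(V^{j,k})}^{i,j}$. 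Exactly the same computation, using $u_{k,j}\cdot u_{j,k}=u_{k}$, shows that $\sigma\circ\tau$ equals $r_{t,\,t+\gamma(V^{j,k})}^{i,k}$.

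Thus $(\sigma,\tau)$ form an interleaving of $V^{i,j}$ and $V^{i,k}$ with total shift $a = \gamma(V^{j,k})$, so that
\[
d_I(V^{i,j}, V^{i,k}) \;\le\; \tfrac{1}{2}\gamma(V^{j,k}),
\]
which, by the definition $\beta = 2 d_I$ of the boundary depth between two persistence modules, gives $\beta(V^{i,j}, V^{i,k}) \le \gamma(V^{j,k})$. The only nontrivial step is the ring-theoretic identity $u_{j,k}\cdot u_{k,j}=u_{j}$, and I expect this to be the key point rather than a serious obstacle given the structural assumptions (\ref{Property-3}) and (\ref{Cond-4}) that are tailored precisely to make this argument work.
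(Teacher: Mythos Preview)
Your proof is correct and follows exactly the same approach as the paper: build an interleaving between $V^{i,j}$ and $V^{i,k}$ via right multiplication by $u_{j,k}$ and $u_{k,j}$, use associativity together with the unit identity $u_{j,k}\cdot u_{k,j}=u_{j}$ (resp.\ $u_{k,j}\cdot u_{j,k}=u_{k}$), and invoke Condition~(\ref{Cond-4}) to identify the composition with the shift map. Your write-up is in fact slightly more careful than the paper's, which checks only one of the two compositions explicitly and has a few index typos.
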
 
 \begin{proof} We shall omit the index on $\mu$ when obvious. 
 We have $$u_{k}= u_{k,j}\cdot u_{j,k}$$  $$\mu ( \bullet \otimes u_{j,k} ) : V^{i,j} \longrightarrow V^{i,k}[c_{k,j}]$$ and $$\mu(\bullet \otimes u_{k,j}):  V^{i, k} \longrightarrow V^{i,j}[c_{k,j}]$$
The composition is 
 $$\mu(\mu( \bullet \otimes u_{k,j}) \otimes u_{j,k}) : V^{i,j} \longrightarrow V^{i,k}[c_{i,j}]  \longrightarrow V^{i,j}[c_{i,j}+c_{j,i}]$$
 By associativity, this is equal to $\mu(\bullet \otimes u_k )$ but by Condition \ref{Cond-4} this is just the map $r^{i,j}_{c_{j,k}+c_{k,j}}: V^{i,j} \longrightarrow V^{i,j}[c_{j,k}+c_{k,j}]$. 
 As a result $\mu ( \bullet \otimes u_{j,k} )$ and $\mu(\bullet \otimes u_{k,j})$ define an interleaving between $V^{i,j}$ and $V^{i,k}$, so their interleaving distance is bounded by $c_{j,k}+c_{k,j}=\gamma (V^{j,k})$. 
 \end{proof} 
 
 Now let $V$ be a persistence module. Consider the map $V_t \longrightarrow V_\infty$ and set $V^\infty_t= \Image (r_{t,\infty})$. This defines a persistence module by 
 $r^\infty_{s,t}: V^\infty_s \longrightarrow V^\infty_t$ sending $r_{s,\infty}(x)$ to $r_{t,\infty}(x)$ by $r_{s,t}$. There is an obvious map $V \longrightarrow V^\infty$ induced by $r_{t,\infty}$. In terms of barcode, $V^\infty$ is obtained from $V$ by deleting all finite bars. 
 
  \begin{cor}\label{Cor-A4}
  We have $$\beta (V^{i,j}) \leq \gamma (V^{i,j})$$
  \end{cor}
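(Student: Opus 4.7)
My plan is to reduce Corollary \ref{Cor-A4} to Proposition \ref{Prop-App1} by specializing its parameters so that the target persistence module becomes $V^i = V^{i,i}$, which has the simplest possible barcode: by Condition \ref{Cond-2}, $V^i_t = 0$ for $t<0$ and $V^i_t = V^i_\infty$ for $t\geq 0$, so all its bars are infinite, starting at $0$. Concretely, I would apply Proposition \ref{Prop-App1} with $k = i$. Since $\gamma(V^{j,i}) = c_{j,i}+c_{i,j} = \gamma(V^{i,j})$, this yields an interleaving between $V^{i,j}$ and $V^i$ realized by the maps
$$\sigma = \mu(\,\cdot\, \otimes u_{j,i}) \colon V^{i,j} \longrightarrow V^i[c_{j,i}], \qquad \tau = \mu(\,\cdot\, \otimes u_{i,j}) \colon V^i \longrightarrow V^{i,j}[c_{i,j}],$$
whose composition $\tau\circ\sigma$, by associativity and the identity $u_{j,i}\cdot u_{i,j} = u_j$ (Property \ref{Property-3}), equals $\mu(\,\cdot\, \otimes u_j)$, which by Condition \ref{Cond-4} is the shift map $r^{i,j}_{t,t+\gamma(V^{i,j})}$.

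The heart of the argument is then to use the triviality of the persistence structure on $V^i$ to kill all finite bars of $V^{i,j}$. Take any $\alpha \in V^{i,j}_t$ lying in a finite bar, i.e.\ with $r^{i,j}_{t,\infty}(\alpha) = 0$. By naturality of $\sigma$ with respect to the restriction maps, together with the fact that $\sigma_\infty \colon V^{i,j}_\infty \to V^i_\infty$ is the isomorphism provided by Property \ref{Property-3}, we get
$$r^{i}_{t+c_{j,i},\,\infty}(\sigma(\alpha)) \;=\; \sigma_\infty\bigl(r^{i,j}_{t,\infty}(\alpha)\bigr) \;=\; 0.$$
But the restriction maps $r^{i}_{s,\infty}\colon V^i_s \to V^i_\infty$ are injective for every $s$ (they are either zero out of $0$ for $s<0$ or the identity for $s\geq 0$), hence $\sigma(\alpha) = 0$ in $V^i_{t+c_{j,i}}$. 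Applying $\tau$ gives $\tau(\sigma(\alpha)) = 0$, and since $\tau\circ\sigma = r^{i,j}_{t,t+\gamma(V^{i,j})}$, we conclude that $\alpha$ is already dead at time $t+\gamma(V^{i,j})$. Thus every finite bar of $V^{i,j}$ has length at most $\gamma(V^{i,j})$, which is exactly the bound $\beta(V^{i,j}) \leq \gamma(V^{i,j})$.

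The main obstacle is essentially bookkeeping rather than a conceptual difficulty: one must correctly identify that specializing $k=i$ in Proposition \ref{Prop-App1} produces the desired shift $\gamma(V^{i,j})$ (using the elementary identity $\gamma(V^{j,i}) = \gamma(V^{i,j})$) and that $V^i$ indeed has no finite bars, so that an interleaving to it automatically bounds all finite-bar lengths. Once this is set up, the conclusion follows by the short commutative-diagram argument above; no further persistence-theoretic machinery beyond what is already used in the proof of Proposition \ref{Prop-App1} is needed.
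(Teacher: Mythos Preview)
Your proof is correct. It runs parallel to the paper's argument but makes a different choice of ``target'' module for the interleaving. The paper introduces the auxiliary module $V^\infty$ (obtained from $V=V^{i,j}$ by keeping only the infinite bars), sets up an ad hoc three-index framework $V^{1,2}=V$, $V^{2,3}=V^{1,3}=V^\infty$, and applies Proposition~\ref{Prop-App1}; since $\beta(V)=\beta(V,V^\infty)$ by definition, the conclusion is immediate once one accepts that this new framework satisfies the axioms. You instead specialize $k=i$ within the original framework and interleave $V^{i,j}$ with $V^{i,i}$, which has no finite bars by Condition~(\ref{Cond-2}); you then add the short direct argument (injectivity of $r^i_{s,\infty}$ forces $\sigma(\alpha)=0$, hence $r^{i,j}_{t,t+\gamma}(\alpha)=0$) to convert that interleaving into a bound on the finite bars of $V^{i,j}$.

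Your route has the advantage of staying inside the given index set and avoiding any check that the $V^\infty$ construction inherits the multiplicative and unit axioms; the paper's route makes the final step $\beta(V)=\beta(V,V^\infty)$ tautological at the price of a somewhat informal new setup. Both rest on the same mechanism: interleaving with a persistence module that has no finite bars forces every finite bar of the source to have length at most the total interleaving shift.
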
 
  \begin{proof} 
  We have the persistence modules $V, V^\infty$. Since there is a map $V \longrightarrow V^\infty$, we have a product map 
  $V \otimes V^\infty \longrightarrow V^\infty$ so setting $V^{1,2}=V, V^{2,3}=V^\infty, V^{1,3}=V^\infty$, the Proposition implies $$\beta (V)=\beta (V, V^\infty) \leq \gamma (V)$$
  \end{proof} 
\begin{proof}[Proof of Proposition \ref{Ki-Sh}] This follows by applying Corollary \ref{Cor-A4} to the persistence module $FH^*(L_1,L_2,t)$.
\end{proof}    
 \begin{rems}
 \leavevmode
    \begin{enumerate} 
    
    \item  This trivially holds for example for the persistence module associated to sublevel sets of a function on a compact manifold. Then in the above Corollary we can have almost equality in the non-degenerate case, and equality in general. Indeed, let $f$ be a function on $S^1$ with two local maxima near level $1$ and two local minima near level $0$ and such that 
    $\sup_{\theta\in S^1} f(\theta)=1, \inf_{\theta\in S^1} f(\theta)=0$. Then $\gamma(f)=1$, and since all bars must connect a point of index $1$ to a point of index $0$, and there is a non trivial bar, the bar must have length one, and $\beta(f)=1- \varepsilon $ while $\gamma(f)=1$.
    \item Note that if $L_1, L_2$ have generating functions quadratic at infinity, $S_1, S_2$ we could try to apply this to the persistence module $H^*((S_1\ominus S_2)^t,(S_1\ominus S_2)^{-\infty})$. However the Kislev-Shelukhin inequality does not hold if $L_1$ or $L_2$ is not embedded !  For example if $f$ is quadratic at infinity on $ {\mathbb R}$ (i.e. $N=\{pt\}$), we have $\gamma (f)=0$ (because $1_N=\mu_N$ on $H^*(pt)$), but $\beta (f)$ can be large (for example if $f(x)=x^4-2x^2$, $\beta (f)=1$ and even though $f$ is not quadratic at infinity, it is convex, so can be deformed to $x^2$ at infinity without adding critical points)

    \item 
  Here we considered the case of coefficients in some field. The case of a ring coefficients can be adapted as follows. First $\beta(L_1,L_2)$ must be defined using the interleaving distance. 
  Second we must define $\gamma (L_1,L_2)$ as $c(\mu,\overline L)+c(\mu, L)$. 
  \end{enumerate} 
  \end{rems} 
\printbibliography
\end{document}